\renewcommand{\epsilon}{\varepsilon}
\renewcommand{\setminus}{\smallsetminus}
\renewcommand{\emptyset}{\varnothing}
\newtheorem{theorem}{Theorem}[section]
\newtheorem{proposition}[theorem]{Proposition}
\newtheorem{corollary}[theorem]{Corollary}
\newtheorem{lemma}[theorem]{Lemma}
\newtheorem{question}[theorem]{Question}
\newtheorem*{theorema}{Theorem A}
\newtheorem*{theoremb}{Theorem B}
\newtheorem*{theoremc}{Theorem C}
\newtheorem*{theoremd}{Theorem D}
\theoremstyle{definition}
\newtheorem{definition}[theorem]{Definition}
\theoremstyle{remark}
\newtheorem{remark}[theorem]{Remark}
\newcommand{\Ann}{\operatorname{Ann}}
\newcommand{\ad}{\mathrm{ad}}
\def\Vightarrow#1{\smash{\mathop{\longrightarrow}\limits^{#1}}}
\newcommand{\cd}{\operatorname{cd}}
\newcommand{\normal}{\lhd}
\newcommand{\gr}{\mathrm{gr}}
\newcommand{\Q}{\mathbb Q}
\newcommand{\Z}{\mathbb Z}
\newcommand{\N}{\mathbb N}
\newcommand{\FP}{\operatorname{FP}}
\newcommand{\Ho}{\operatorname{H}}
\newcommand{\cohom}[3]{H^{{\raise1pt\hbox{$\scriptstyle#1$}}}(#2\>\!,#3)}
\newcommand{\tatecohom}[3]%
  {\widehat H^{{\raise1pt\hbox{$\scriptstyle#1$}}}(#2\>\!,#3)}
\newcommand{\Cohom}[3]%
  {H^{{\raise1pt\hbox{$\scriptstyle#1$}}}\big(#2\>\!,#3\big)}
\newcommand{\Tatecohom}[3]%
  {\widehat H^{{\raise1pt\hbox{$\scriptstyle#1$}}}\big(#2\>\!,#3\big)}
\newcommand{\homol}[3]{{\mathrm H}_{{\lower1pt\hbox{$\scriptstyle#1$}}}(#2\>\!,#3)}
\newcommand{\homolog}[2]{{\mathrm H}_{{\lower1pt\hbox{$\scriptstyle#1$}}}(#2)}
\renewcommand{\ker}{\operatorname{Ker}}
\newcommand{\im}{\operatorname{Im}}
\newcommand{\Hom}{\operatorname{Hom}}
\newcommand{\Tor}{\operatorname{Tor}}
\title[]{Bass-Serre theory for Lie algebras: a homological approach }
\author{D.~ H. ~Kochloukova}
\address{Dessislava H.~Kochloukova, Department of Mathematics, University of Campinas, 
13083 - 859 Campinas, SP, Brazil}\email{desi@unicamp.br}
\author{C.~Mart\'inez-P\'erez}
\address{Conchita Mart\'inez-P\'erez, Departamento de Matem\'aticas, Universidad de Zaragoza,
50009 Zaragoza, Spain} \email{conmar@unizar.es}
\keywords{}
\subjclass[2000]{%\color{red}
17B55, 20J05}
\thanks{}
\begin{document}

\thispagestyle{empty}

\begin{abstract} 
We develop a version of the Bass-Serre theory for Lie algebras (over a field  $k$) via a homological approach. We define the notion of fundamental Lie algebra of a graph of Lie algebras and show that this construction yields Mayer-Vietoris sequences. We extend some well known results in group theory  to $\N$-graded  Lie  algebras: for example, we show 
 that one relator $\N$-graded Lie algebras are iterated HNN extensions with free bases which can be used for cohomology computations and apply the Mayer-Vietoris sequence to give some results about coherence of Lie algebras. \iffalse{Finally, we give two examples of metabelian groups that show that in general there is no correspondence between the homological finiteness properties of a group and those of its descending central Lie algebra.}\fi \end{abstract}

\maketitle

\section{Introduction}
The  Bass-Serre Theory is one of the corner-stones of modern group theory. It has definitely contributed to the flourishing of geometric group theory, lead to many applications and it has also proven to be useful to find shorter and more elegant proofs of known results. From the point of view of group cohomology, in the core of the Bass-Serre Theory lies the fact that associated to a group action on a tree one can build a short exact sequence that yields Mayer-Vitoris exact sequences. In this paper, see Section \ref{B-S-section},  we develop a  Bass-Serre theory of Lie algebras which, if not otherwise stated  are defined over a field $k$ of arbitrary characteristic.  In the case of Lie algebras, the two basic constructions which are the building blocks of the Bass-Serre theory,  i.e. free products with amalgamation and HNN-extensions, have been already considered in the literature ( \cite[Chapter 4]{BokutKukin},  \cite{Feldman},  \cite{L-S}, \cite{Wasserman}) and we use them to define the fundamental Lie algebra of a graph of Lie algebras. 
Lie algebras do not act on trees but we show that associated to these constructions one also has short exact sequences:

\begin{theorema} Let $\Delta$ be a graph of Lie algebras with fundamental Lie algebra $L = L_{\Delta}$. Then there is a short exact sequence of right $U(L)$-modules
$$0 \to \oplus_{  e   \in  E(\Gamma)} \  k \otimes_{U(L_e)} U(L) \to \oplus_{v \in V(\Gamma)} k \otimes_{U(L_v)} U(L) \to k \to 0,$$
 %where  $\overline{E({\color{red}\Delta})} $ is an orientation of $E({\color{red}\Delta})$ i.e.  contains precisely one element  from every oriented par $\{ e, \bar{e} \}$.
\end{theorema}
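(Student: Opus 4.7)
The plan is to write down the two maps explicitly, verify that they form a complex ending in a surjection onto $k$, and then prove exactness by induction on $|E(\Gamma)|$, reducing to the two building-block constructions: amalgamated free products (a single non-loop edge) and HNN extensions (a single loop).

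First I would define the maps following the classical Bass--Serre recipe. The right map $\varepsilon:\bigoplus_{v\in V(\Gamma)}k\otimes_{U(L_v)}U(L)\to k$ is the direct sum of the augmentations on each summand, and is obviously surjective. For each edge $e$ with endpoints $\iota(e),\tau(e)$, the two inclusions $L_e\hookrightarrow L_{\iota(e)}$ and $L_e\hookrightarrow L_{\tau(e)}$ (adjusted by the stable derivation of the HNN construction when $e$ is a loop) induce $U(L)$-linear maps of induced modules, and I take $\partial$ on the $e$-summand to be their signed difference. A one-line computation then gives $\varepsilon\circ\partial=0$.

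For exactness I would induct on $|E(\Gamma)|$. The base cases are a single non-loop edge, where $L=L_{v_1}*_{L_e}L_{v_2}$, and a single loop, where $L$ is an HNN extension of $L_{v_1}$ along $L_e$. In each case the key input is a normal-form theorem for $U(L)$ in terms of $U(L_v)$ and $U(L_e)$: for amalgamated free products Shirshov's theorem (see \cite[Chapter 4]{BokutKukin}, and also \cite{L-S,Wasserman}) exhibits $U(L)$ as a free right $U(L_e)$-module with a basis of alternating reduced words, with a similar decomposition over each $U(L_v)$; the HNN case admits an analogous free decomposition using the stable element. From these free decompositions, both injectivity of $\partial$ and exactness in the middle can be read off directly. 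For the inductive step I would fix a spanning tree $T$ of $\Gamma$ and construct $L$ by iterated amalgamations along the edges of $T$ followed by iterated HNN extensions along the remaining edges; at each stage, the induction hypothesis, the two-term sequence coming from the newly added edge, base change along the relevant enveloping-algebra inclusion (exact because induction preserves exactness for these inclusions), and the snake lemma combine to yield the updated short exact sequence.

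The principal obstacle is injectivity of $\partial$: the complex property and surjectivity at $k$ are formal, but showing that the edge-summands embed without any cross-cancellation requires the normal-form theorems above. These carry the genuinely Lie-algebraic content of the argument, guaranteeing that adjoining an edge enlarges $U(L)$ in a controlled, free manner; once they are in hand, the remaining exactness checks reduce to routine diagram chases.
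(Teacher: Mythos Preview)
Your inductive strategy matches the paper's: both reduce to the one-edge cases (amalgamated free product and HNN extension) and then build the general sequence by adding one edge at a time, using exactness of base change along $U(L_0)\hookrightarrow U(L)$ to glue the new edge's two-term sequence with the sequence already obtained.

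The difference lies in how the two base cases are established. For amalgams, the paper quotes a short exact sequence of augmentation-ideal type due to Feldman, embeds it in the obvious split sequence $0\to U(L)\to U(L)^2\to U(L)\to 0$, and applies the $3\times 3$ lemma, rather than arguing directly from a normal-form basis. For HNN extensions the paper does not invoke a normal-form theorem at all: it takes the standard two-term free resolution of $k$ over $U(F)$ (with $F$ free on the generators of $W$), applies $-\otimes_{U(R)}k$ to obtain an exact sequence $0\to R_{\mathrm{ab}}\to\bigoplus_\Gamma U(W)\to U(W)\to k\to 0$, and then analyses the image of $R_{\mathrm{ab}}$ explicitly to prove the key claim that $t\lambda\in LU(W)$ forces $\lambda\in AU(W)$. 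Your normal-form route should also work, but ``read off directly'' is optimistic: extracting injectivity of $\partial$ from a basis of reduced words still requires setting up a length filtration and checking compatibility, which is straightforward but not immediate. One small caution on your description of $\partial$: in this paper's setup a non-tree edge $e$ carries only one Lie-algebra monomorphism $\sigma_e$ together with a derivation $d_e$, not two inclusions, so the map on the $e$-summand is not literally a signed difference of inclusion-induced maps; after the gluing it comes out as multiplication by the stable letter $e$, as in the paper's HNN proposition where $\alpha(1\otimes\lambda)=1\otimes t\lambda$.
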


Here, $U(L)$ is the universal enveloping algebra of the Lie algebra $L$, see Subsection \ref{definitionfundamental} for the rest of notation.
As in the group case, this leads to Mayer-Vietoris exact sequences  (Corollary \ref{exact2}) and allows us to extend to Lie algebras some classical results for groups. The main feature of our Lie algebra version of the Bass-Serre theory is the use of homological language. 
However, there is no complete analogue for Lie algebras of the Bass-Serre theory. Indeed in \cite{Shirshov2} Shirshov showed the existence of Lie algebras such that their free Lie product has a Lie subalgebra that is not free, is not isomorphic to any subalgebra of any of the factors and cannot be decomposed as the free Lie product of any of its subalgebras.   Note that Shirshov does not use homological language. Moreover,  as Mikhalev,  Umirbaev and Zolotykh showed  in \cite{M-U-Z}  there exist non-free Lie algebras (over a field of characteristic $p > 2$)  of cohomological dimension 1. 

%In Section \ref{RAAGs} we use Theorem A in the case of a  free amalgamated product to produce inductively a projective resolution for the trivial $U(L_\Gamma)$-module where $L_\Gamma$ is the right angled %Artin algebra associated to the graph $\Gamma$ (see  Section \ref{RAAGs} for definitions). This resolution can be seen as the Lie algebra counterpart of the Salvetti complex and will be crucial in \cite{K-M2} where %we will use it to study  homological finiteness properties of ideals $I$ of $L_\Gamma$ lying over the commutator (i.e., Bestvina-Brady type ideals).

One of the classical applications of the Bass-Serre theory for groups is the study of one-relator groups. One-relator Lie algebras are known to share many of the (co)homological properties of one-relator groups, for example it is known that they are of type $\FP_\infty$ and have cohomological dimension at most two  \cite[Theorem 3.10.6, Corollary 3.10.7]{BokutKukin}. In the case of groups, both results can be shown by embedding one relator groups in iterated HNN extensions.  We denote $\mathbb{N} = \{ 1,2, \ldots \}$. An $\mathbb{N}$-graded Lie algebra  is a Lie algebra  $L$ with a decomposition as vector space $L = \oplus_{i \in \mathbb{N}}  L_i, \hbox{ where }[L_i, L_j] \subseteq L_{i+j}.$  In Section \ref{iterated} we show that in the $\N$-graded case, one relator Lie algebras  are iterated HNN-extensions.

\begin{theoremb} 
Let 
$L=\langle X\mid r\rangle$
be a one relator $\mathbb{N}$-graded  Lie algebra  with $X$ finite, where  the $\mathbb{N}$-grading is induced by  some weight function $\omega:X\to\Z^+$, i.e.  $r\in F(X)$ is homogeneous  with respect to $\omega$. Then $L$ is an iterated HNN-extension  of Lie algebras
$$L=(\ldots(A*_{h_n})*_{h_{n-1}})\ldots)*_{h_1}$$
such that $A$ and all the associated  Lie subalgebras in each of the HNN-extensions are free.
 \end{theoremb}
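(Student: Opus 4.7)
The plan is to adapt the classical Magnus hierarchy for one-relator groups to the $\N$-graded Lie-algebra setting, proceeding by induction on a combinatorial complexity of $r$ (for instance the $\omega$-weight $\omega(r)$). First, if some $x \in X$ does not appear in $r$, then
$$L \;\cong\; F(\{x\})\,*\,\langle X\setminus\{x\}\mid r\rangle,$$
which is a trivial HNN extension (associated subalgebras both zero) of the smaller one-relator Lie algebra on $X\setminus\{x\}$; the inductive hypothesis completes this case. We may therefore assume that every generator of $X$ appears in $r$.

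For the main step, pick $x \in X$ of minimal weight, set $Y = X\setminus\{x\}$, and consider the split short exact sequence
$$0 \longrightarrow M \longrightarrow F(X) \longrightarrow F(Y) \longrightarrow 0,$$
where $M$ is the ideal of $F(X)$ generated by $x$. By a classical theorem of Shirshov, $M$ is itself a free Lie algebra and admits an $\omega$-homogeneous free generating set of the form $\{\ad(u)(x)\mid u\in\mathcal{B}\}$, where $\mathcal{B}$ is a PBW-type $\omega$-homogeneous basis of the augmentation ideal of $U(F(Y))$. Since every generator appears in $r$, we have $r\in M$; its expansion involves only finitely many letters $\ad(u_1)(x),\dots,\ad(u_m)(x)$. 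The plan is then to exhibit $L$ as an HNN extension with stable letter $x$, whose base is a one-relator Lie algebra on the generating set $Y$ with a new $\omega$-homogeneous relator $\tilde r$ of strictly smaller weight (obtained by rewriting $r$ in the letters $\ad(u_i)(x)$ and ``extracting'' one occurrence of $x$), and whose associated subalgebras are the finitely generated Lie subalgebras of $F(Y)$ generated by subsets of $\{u_1,\ldots,u_m\}$; these are free by the Shirshov--Witt theorem. The inductive hypothesis applied to the new base, followed by concatenation of HNN extensions, gives the required iterated presentation of $L$.

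The principal obstacle is making the Magnus step precise. One must (i) fix an $\omega$-homogeneous free basis of $M$ and identify the finitely many letters needed to express $r$; (ii) verify that the HNN extension so constructed literally recovers $L$, rather than merely a quotient or extension of it; and (iii) confirm that the auxiliary relator $\tilde r$ is $\omega$-homogeneous with $\omega(\tilde r)<\omega(r)$, so that the induction terminates. Tracking the interaction of the $\omega$-grading with Shirshov's free basis of $M$ through successive Magnus rewritings, while preserving homogeneity and finite generation of the associated subalgebras at each stage, is the combinatorial heart of the proof.
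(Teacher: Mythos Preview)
Your Magnus-style outline has the roles of ``stable letter'' and ``codimension-one ideal'' reversed, and this creates a genuine gap. You take $M$ to be the ideal generated by the single element $x$ and assert $r\in M$ because every generator appears in $r$. That implication fails: $r\in M$ would mean every Lie monomial of $r$ contains $x$, whereas the hypothesis only says $x$ occurs in \emph{some} monomial. For example, with $X=\{x,y_1,y_2\}$ all of weight~$1$, the homogeneous element $r=[y_1,y_2]+[x,y_1]$ involves every generator yet lies outside the ideal generated by $x$. Without $r\in M$ there is nothing to rewrite, and the step collapses. There is a second structural problem: even granting $r\in M$, an HNN extension with stable letter $x$ and base $\langle Y\mid \tilde r\rangle$ imposes only the relation $\tilde r\in F(Y)$ together with derivation relations $[x,a]=d(a)$; it is not clear how these could reproduce the original relator $r$, which genuinely involves $x$.

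The paper's argument runs in the opposite direction. One singles out an element $h$ (the least in a weighted Hall basis) and works with the ideal $N$ generated by $X\setminus\{h\}$; since $F(X)/N\cong kh$ is one-dimensional abelian, $N\supseteq[F(X),F(X)]$ and hence automatically contains $r$ as soon as $\omega(r)\geq 2$. One rewrites $r$ in a finite subset $Y_1$ of the free basis $\{[h,\ldots,h,z]:z\in X\setminus\{h\}\}$ of $N$, keeping the \emph{same} relator $r$ on the larger generating set $Y_1$, and realises $L$ as an HNN extension with stable letter $h$ and base $B_1=\langle Y_1\mid r\rangle$. The induction is not on $\omega(r)$ (which never changes) but on descent along the Hall filtration $F(X_0)\supset F(X_1)\supset\cdots$; termination is supplied by Labute's theorem, which guarantees that at some finite stage $t$ the quotient $F(X_t)/I$ is free, so the innermost base $B_t$ is free. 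Finally, freeness of the associated subalgebras $A_i=\langle Z_i\rangle$ inside the one-relator bases $B_i$ is not a Shirshov--Witt consequence (the $B_i$ are not free) but follows from the Freiheitssatz, after choosing the truncation indices minimal so that $r\notin F(Z_i)$.
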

This result combined with Theorem A can be used for explicit computations in cohomology.

Coherence is another group theoretical property deeply related to Bass-Serre theory. For example, fundamental groups of graphs of groups where the edge groups have the property that all its subgroups are finitely generated and all its vertex groups are coherent are known to be coherent \cite[Lemma 4.8, page 41]{Wilton} and an analogous result holds for the corresponding group rings  \cite{A}, \cite{Lam}. Droms  \cite{D} has characterised the right angled Artin groups $A_\Gamma$ which are coherent. Here we consider similar results for Lie algebras and their universal enveloping algebras, using as a main tool the Mayer-Vietoris sequences constructed in Section \ref{B-S-section}.

\begin{theoremc}
Assume that the Lie algebra $L$ is the fundamental  Lie algebra of a graph of Lie algebras such that for the vertex Lie algebras $L_v$ the universal enveloping algebra $U(L_v)$ is coherent and for each edge algebra $L_e$, $U(L_e)$ is Noetherian. Then $U(L)$ is coherent.
\end{theoremc}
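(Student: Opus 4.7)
Set $R = U(L)$, $R_v = U(L_v)$, $R_e = U(L_e)$. By PBW applied to the fundamental Lie algebra construction (Subsection \ref{definitionfundamental}), $R$ is free, and hence flat, as both a left and right module over each $R_v$ and each $R_e$. This gives the base-change isomorphisms $\Tor^R_i(X \otimes_{R_v} R, N) \cong \Tor^{R_v}_i(X, N|_{R_v})$ (and analogously for $R_e$) that power the Mayer-Vietoris long exact sequence of Corollary \ref{exact2}.

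The plan is to adapt the group-ring argument of {\AA}berg \cite{A} and Lam \cite{Lam} to the Lie algebra setting, proving coherence by showing every finitely presented right $R$-module $M$ is of type $\FP_\infty$. The key ring-theoretic input is that PBW-freeness combined with Chase's theorem (valid for coherent rings) yields, for any index set $J$, that $R^J|_{R_v} = \prod_J (R|_{R_v})$ is flat over $R_v$ (since $R|_{R_v}$ is free and $R_v$ is coherent) and that $R^J|_{R_e}$ is flat over $R_e$ (using that $R_e$ is Noetherian). The resulting vanishings $\Tor^{R_v}_i(-, R^J|_{R_v}) = 0$ and $\Tor^{R_e}_i(-, R^J|_{R_e}) = 0$ for $i \ge 1$ are then fed through a generalization of the Mayer-Vietoris LES to deduce $\Tor^R_i(M, R^J) = 0$ for $i \ge 1$; by Bieri's criterion this characterizes $M$ as $\FP_\infty$, hence $R$ as coherent.

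The hard part will be extending Corollary \ref{exact2}, which is stated for the trivial module $k$ on one side, to a Mayer-Vietoris that involves general modules on both sides. I would handle this by combining the short exact sequence of Theorem A with a projective resolution of $M$ in a double complex, yielding a spectral sequence whose $E_2$-page is governed by $\Tor^{R_v}_*$ and $\Tor^{R_e}_*$; the vanishings established above force $E_2$ to be concentrated in the zeroth row, giving the desired vanishing of $\Tor^R_i(M, R^J)$. The delicate point is the low-degree analysis, in particular showing $\Tor^R_1(M, R^J) = 0$ using the finite presentation of $M$; this is where the bookkeeping of the spectral sequence, and the explicit short form of the SES of Theorem A, do the essential work. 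Alternatively, a more classical route reduces to the basic building blocks (a single amalgamated coproduct or HNN extension along the graph $\Gamma$), where the argument becomes a direct application of Theorem A together with a direct limit over finite subgraphs of $\Gamma$.
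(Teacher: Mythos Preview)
Your overall strategy is the paper's: use Chase's characterization of coherence via flatness of $\prod_I R$ as a left module, and feed the Mayer--Vietoris long exact sequence of Tor's to get the needed vanishing from the hypotheses on $R_v$ and $R_e$. Two points where your write-up diverges from, or is less sharp than, the paper deserve comment.

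First, what you call the ``hard part'' --- extending the Mayer--Vietoris of Corollary~\ref{exact2} from the trivial module to general modules --- is already done in the paper as Corollary~\ref{ses-homology}. It is obtained directly from Theorem~A by tensoring with $-\otimes_k V$ and invoking the Hopf-algebra identity~(\ref{Mackey}); no spectral sequence or double complex is needed. So your proposed machinery is unnecessary overhead.

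Second, your handling of the low-degree case $s=1$ is where the proposal is vague, and your diagnosis (``using the finite presentation of $M$'') is not quite right. The paper's argument works for \emph{arbitrary} right modules $V$, not only finitely presented ones. The point is that the long exact sequence only sandwiches $\Tor_1^R(V,\prod_I R)$ between $\oplus_v\Tor_1^{R_v}(V,\prod_I R)=0$ and $\oplus_e\Tor_0^{R_e}(V,\prod_I R)$, the latter nonzero; so vanishing does not follow directly. What one actually uses is that for a \emph{Noetherian} ring $R_e$ the natural map $(\prod_I B_i)\otimes_{R_e} A\to\prod_I(B_i\otimes_{R_e}A)$ is injective for all modules. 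Comparing the two exact rows (one with $\prod_I R$ inside the tensor, one with the product pulled outside) via a commutative diagram then forces the map $\oplus_e V\otimes_{R_e}(\prod_I R)\to\oplus_v V\otimes_{R_v}(\prod_I R)$ to be injective, whence $\Tor_1^R(V,\prod_I R)=0$. Your sketch invokes Noetherianness of $R_e$ only to get flatness of $R^J|_{R_e}$, which is not enough for this step; the stronger product-tensor injectivity is the essential use of the Noetherian hypothesis.

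Finally, the detour through ``$M$ is $\FP_\infty$ by Bieri's criterion'' is unnecessary: once $\Tor_s^R(V,\prod_I R)=0$ for all $V$ and $s\ge 1$, $\prod_I R$ is flat and Chase gives coherence directly.
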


Let $\Gamma$ be a finite simple graph with vertex set $V(\Gamma)$ and edge set $E(\Gamma)\subset V(\Gamma)\times V(\Gamma)$, we assume that there are no loops or double edges. Associated to $\Gamma$ there is  a group called the {\it right angled Artin group} $G_\Gamma$ given by the presentation  in the category of groups
$$G_\Gamma=\langle V(\Gamma)\mid [u,v]=1\text{ whenever }\{u,v\}\in E(\Gamma)\rangle$$
and also a $k$-Lie algebra, we  call the right angled Artin Lie algebra $L_\Gamma$, given by the presentation in the category of Lie algebras
$$L_\Gamma=\langle V(\Gamma)\mid [u,v]=0\text{ whenever }\{u,v\}\in E(\Gamma)\rangle.$$

 The algebra $L_{\Gamma}$ is naturally graded  and generated in degree 1 by the elements of $V(\Gamma)$.

\begin{theoremd}  Let $L_\Gamma$ be a right angled Artin Lie algebra.  Then the following conditions are equivalent :

1) $ U(L_\Gamma)$ is coherent; 

2) the graph $\Gamma$ is chordal, i.e., it has no $n$-cycle embedded as a full subgraph for $n\geq 4$;

3) every finitely generated $\mathbb{N}$-graded subalgebra of $L_{\Gamma}$ is $FP_{\infty}$. 
\end{theoremd}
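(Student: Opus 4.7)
The plan is to prove the forward implications $(2)\Rightarrow (1)$ and $(2)\Rightarrow (3)$ by induction on $|V(\Gamma)|$ using the chordal decomposition, and the converses by contrapositive. A chordal graph $\Gamma$ contains a simplicial vertex $v$, one whose neighbourhood $N(v)$ is a clique, and writing $\Gamma$ as the union of $\Gamma\setminus v$ (still chordal) and the complete subgraph on $\{v\}\cup N(v)$, intersecting in $N(v)$, gives an amalgamated free product of Lie algebras
$$L_\Gamma \;=\; L_{\Gamma\setminus v} \,*_{L_{N(v)}}\, L_{\{v\}\cup N(v)}.$$
Both $L_{N(v)}$ and $L_{\{v\}\cup N(v)}$ are abelian (their defining graphs are complete), so their universal envelopes are polynomial rings and in particular Noetherian. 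Hence Theorem~C applied inductively gives $(1)$. For $(3)$, I combine the short exact sequence of Theorem~A with the induction hypothesis and the PBW-freeness of $U(L_\Gamma)$ over the vertex and edge envelopes to produce a finitely generated projective resolution of $k$ over $U(M)$ for an arbitrary finitely generated $\N$-graded subalgebra $M\leq L_\Gamma$.

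For the converses, assume $\Gamma$ is not chordal, so it contains an induced cycle $C_n$ with $n\geq 4$. The retraction $L_\Gamma\twoheadrightarrow L_{C_n}$ that kills vertices outside $C_n$ realises $U(L_{C_n})$ as a retract of $U(L_\Gamma)$ and $L_{C_n}$ as an $\N$-graded subalgebra of $L_\Gamma$, so it suffices to disprove $(1)$ and $(3)$ inside $L_{C_n}$. In the model case $n=4$ we have $L_{C_4}\cong F_2\oplus F_2$, the direct sum of two rank-two free Lie algebras, with universal envelope $k\langle a,b\rangle\otimes_k k\langle c,d\rangle$; one exhibits a finitely generated left ideal of $U(L_{C_n})$ which is not finitely presented (contradicting $(1)$) and a finitely generated $\N$-graded subalgebra of $L_{C_n}$ which fails $FP_\infty$ (contradicting $(3)$), the latter being a Lie algebra analogue of the Bestvina-Brady subgroup of $F_2\times F_2$ (and of its $C_n$-analogues for $n>4$).

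The main technical obstacle lies in this last construction. The naive augmentation $L_{C_n}\to k$ onto an abelian Lie algebra is not $\N$-graded if $k$ sits in degree zero, so the bad subalgebra must be taken either as the graded kernel of an augmentation with $k$ placed in degree one, or constructed by an explicit ad hoc recipe. Verifying that it fails $FP_\infty$ requires a homological calculation, which I would carry out via Theorem~A applied to an amalgamated decomposition of $L_{C_n}$ (splitting off one edge at a time) and showing that a suitable $H_m$ is infinite dimensional; this will mirror the classical fact that the flag complex of $C_n$ fails to be simply connected for $n\geq 4$, which is ultimately the geometric source of the obstruction.
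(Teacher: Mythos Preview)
Your argument for $(2)\Rightarrow(1)$ is essentially the paper's: the simplicial-vertex decomposition is the same amalgamated free product over a complete intersection that the paper uses, followed by Theorem~C and induction. No issue there.

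The genuine gap is in $(2)\Rightarrow(3)$. Your plan is to induct on $|V(\Gamma)|$ via the chordal splitting and ``produce a finitely generated projective resolution of $k$ over $U(M)$'' for an arbitrary finitely generated $\N$-graded subalgebra $M\leq L_\Gamma$. But the short exact sequence of Theorem~A lives over $U(L_\Gamma)$, not over $U(M)$, and an arbitrary such $M$ need not respect the amalgamated decomposition at all: neither $M\cap L_{\Gamma\setminus v}$ nor $M\cap L_{N(v)}$ is in general finitely generated, so the induction hypothesis gives nothing to feed into the sequence. The paper avoids this problem by a completely different route: it first proves that when $\Gamma$ is chordal the derived subalgebra $[L_\Gamma,L_\Gamma]$ is a \emph{free} Lie algebra (inductively, using the Kukin/Lichtman--Shirvani result that a subalgebra of an amalgam meeting the amalgamating factor trivially and the vertex factors in free subalgebras is itself free). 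Then for any finitely generated graded $S\leq L_\Gamma$ one has a short exact sequence $0\to F\to S\to Q\to 0$ with $F=S\cap[L_\Gamma,L_\Gamma]$ free and $Q$ finite-dimensional abelian, and a two-line Lyndon--Hochschild--Serre spectral sequence plus Noetherianity of $U(Q)$ forces all $H_i(S,k)$ to be finite dimensional, which for graded Lie algebras is equivalent to $FP_\infty$. This is the step you are missing.

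For the converses your instinct is correct---the obstruction is a Bestvina--Brady--type kernel $I_\chi\lhd L_{C_n}$---but the paper does not redo the homological computation; it cites \cite{K-M2} for the fact that $I_\chi$ is finitely generated (because $C_n$ is connected) but not finitely presented (because $C_n$ is not $1$-connected). This single subalgebra kills both $(1)$ and $(3)$: it directly refutes $(3)$, and via Lemma~\ref{graded} (coherent $U(L)$ implies graded-coherent $L$) it refutes $(1)$, so you do not need to exhibit a bad ideal of $U(L_{C_n})$ by hand.
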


The main examples of Lie algebras considered  in the first sections of paper (one-relator and right angled Artin Lie algebras)   are  defined using presentations in terms of generators and relations  in the category of Lie algebras  and are precisely the Lie algebras associated to the descending central series of the analogous groups  after tensoring with $\otimes_{\mathbb{Z}} k$. In all these cases  one sees a very close relation between the (co)homological finiteness properties of the Lie algebra and those of the group. 

 Theorem D shows that with respect to coherence $\mathbb{N}$-graded Lie algebras resemble discrete groups. In section \ref{example} we will  see an example that shows that right angled Artin Lie algebras do not always have the same properties as right angled Artin groups and it is linked with the fact  that for $\mathbb{N}$-graded Lie algebras Kurosh type result does not hold. The fact that a Kurosh type result does not hold for general Lie algebras was observed earlier by Shirshov \cite{Shirshov2}. As well we will show an example of $\mathbb{N}$-graded Lie algebra that has infinitely many ends but does not split as a free product of $\mathbb{N}$-graded Lie algebras.  It was shown by Feldman in \cite{Feldman2} that the Shirshov example is a Lie algebra with infinitely many ends that is not a free product but Shirshov's example is not $\mathbb{N}$-graded.

\iffalse{
In  Section  \ref{sec-examples}, we construct a metabelian group $G$ that is not finitely presented but such that the associated lower central series $\N$-graded $K$-Lie algebra is  finitely presented  if $\mathrm{char}(K) \not= 2$ and another example where the metabelian group $G$ is finitely presented but the associated lower central series $\mathbb{N}$-graded $K$-Lie algebra $L$ is not finitely presented if $\mathrm{char}(K) = p > 0$. In both cases we use the Bryant-Groves classification of finitely presented metabelian Lie algebras \cite{B-G1}, \cite{B-G2} and the Bieri-Strebel classification of finitely presented metabelian groups in \cite{B-S} and its link to valuation theory \cite{Bi-G}. It is worth mentioning that for both $\N$-graded Lie algebras  the homological finiteness property $\FP_2$ and finite presentation are the same \cite{Weigel}.  By the Bryant-Groves classification mentioned above the same happens for ordinary,  i.e. not necessarily  $\mathbb{N}$-graded, metabelian Lie algebras. But it is still an open problem to construct an ordinary Lie algebra that is of homological type $FP_2$ but is not finitely presented though such examples exist for groups.}\fi

Unless otherwise stated, all along the paper by module we mean {\it rigth} module. 

\medskip
{\bf Acknowledgements} During the preparation of this work the first named author was partially supported by CNPq grant 301779/2017-1  and by FAPESP grant 2018/23690-6. The second named author was partially supported by PGC2018-101179-B-I00 and by Grupo \'Algebra y Geometr\'ia, Gobierno de Arag\'on and Feder 2014-2020 \lq\lq Construyendo Europa desde Arag\'on".
%\section{Homology of Lie  and graded Lie algebras }

\section{Bass-Serre theory for Lie algebras} \label{B-S-section}

In this section we develop a version of  Bass-Serre theory for Lie algebras. In the context of Lie algebras we can not talk about actions but we do have all the (co)homological consequences that in the case of groups one can obtain by using actions on trees. More precisely, from the action  of a group  on a tree one can derive the existence of certain exact sequences that can be used to prove (co)homological results. Here, we will also derive similar sequences using different methods.

 We begin by considering the Lie algebra version of the two constructions which are the basis for Bass-Serre theory: amalgamated free products and HNN extensions.
%Note that here the Lie algebras are not required to be graded.

Throughout the paper all Lie algebras are over a field $k$, for a Lie algebra $L$  we denote by $U(L)$ the universal enveloping algebra of $L$ and $$\epsilon : U(L) \to k$$ denotes the augmentation map that sends $L$ to 0 and is the identity on $k$.  We say that $$L\hbox{  has a presentation (in terms of generators and relations) }\langle X | R \rangle \hbox{ if }L \simeq F/ N,$$ where $F$ is the free Lie algebra with a free basis $X$, $R$ is a subset of $F$ and $N$ is the ideal in $F$ generated by $R$. Sometimes instead of $R$ we write a set of relations $\{ \alpha_j = \beta_j \mid j\in J \}$ that should be interpreted as  $R = \{ \alpha_j - \beta_j \mid j\in J \}$. If $M_i = \langle X_i | R_i \rangle$ is a Lie algebra for $i \in I$  then   $\langle \cup_i X_i |  (\cup_{i \in I} R_i)  \cup \{ \alpha_j - \beta_j \}_{j \in J} \rangle$ is denoted by $\langle \cup_{i \in I} M_i | \alpha_j = \beta_j \hbox{ for } j \in J \rangle$ or by  $\langle M_1, \ldots, M_n | \alpha_j = \beta_j \hbox{ for } j \in J \rangle$ if $I = \{ 1, \ldots, n \} $.

\subsection{Amalgamated Lie algebra products and the associated short exact sequence} \label{prod}

 Let $L_0,L_1,L_2$ be Lie algebras and assume that we have monomorphisms $\sigma:L_0\to L_1$, $\tau:L_0\to L_2$. The  {\it free amalgamated product } of the Lie algebras $L_1$ and $L_2$   with amalgam $L_0$ is the Lie algebra given by the presentation in terms of generators and relations
$$L = L_1 *_{L_0} L_2=\langle L_1,L_2\mid\sigma(a)=\tau(a),a\in L_0\rangle.$$

\begin{proposition} \label{amalgam*} Let  $L = L_1 *_{L_0} L_2$ be  a free amalgamated product of Lie algebras. Then there is an exact complex of right $U(L)$-modules $$ 0 \to k \otimes_{U(L_0)} U(L)  \Vightarrow{\alpha}
	(k \otimes_{U(L_1)} U(L))  \oplus  (k \otimes_{U(L_2)} U(L))  \Vightarrow{ \beta} k \to 0,  
	$$
	where $\alpha( 1 \otimes \lambda) = (1 \otimes \lambda, - 1 \otimes \lambda)$ and $\beta(1 \otimes \lambda_1, 1 \otimes \lambda_2) = \epsilon(\lambda_1) + \epsilon(\lambda_2)$.		\end{proposition}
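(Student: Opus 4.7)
My plan is to identify $k \otimes_{U(L_i)} U(L) \cong U(L)/I_i$ for $i=0,1,2$, where $I_i := L_i\cdot U(L) = I(L_i)\cdot U(L)$ denotes the right ideal of $U(L)$ generated by $L_i$ (equivalently, by the augmentation ideal $I(L_i)=L_iU(L_i)$ of $U(L_i)$ via the inclusion $U(L_i)\hookrightarrow U(L)$). Under this dictionary, $\alpha$ sends $\lambda+I_0 \mapsto (\lambda+I_1,\,-\lambda+I_2)$ and $\beta$ sends $(\lambda_1+I_1,\lambda_2+I_2)\mapsto \epsilon(\lambda_1)+\epsilon(\lambda_2)$. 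Well-definedness of both maps, the equation $\beta\alpha=0$, and surjectivity of $\beta$ are all immediate. What remains is exactness at the middle term and the injectivity of $\alpha$.

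For exactness at the middle, I would first establish the equality $I_1+I_2 = I(L)$, the augmentation ideal of $U(L)$. The inclusion $I_1+I_2\subseteq I(L)$ is obvious. For the converse, since $L$ is generated as a Lie algebra by $L_1\cup L_2$ and since $[x,y]=xy-yx$ lies in $L_i U(L)+L_jU(L)$ whenever $x\in L_i,\,y\in L_j$, a straightforward induction on bracket depth shows every element of $L$ lies in $I_1+I_2$, whence $I(L)=L\cdot U(L)\subseteq I_1+I_2$. Given now $(\lambda_1+I_1,\lambda_2+I_2)\in\ker\beta$, the element $\lambda_1+\lambda_2$ lies in $I(L)=I_1+I_2$, so we may write $\lambda_1+\lambda_2=\mu_1+\mu_2$ with $\mu_j\in I_j$; setting $\nu := \lambda_1-\mu_1 = \mu_2-\lambda_2$ gives $\alpha(\nu+I_0)=(\lambda_1+I_1,\lambda_2+I_2)$.

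The injectivity of $\alpha$ reduces to the identity $I_1\cap I_2 = I_0$ inside $U(L)$. The inclusion $I_0\subseteq I_1\cap I_2$ is immediate from $L_0\subseteq L_1\cap L_2$. The reverse inclusion $I_1\cap I_2\subseteq I_0$ is the technical heart of the proposition and is where the amalgamated-product structure is essential. My approach is to invoke the identification $U(L_1 *_{L_0} L_2)\cong U(L_1) *_{U(L_0)} U(L_2)$ as an amalgamated free product in the category of associative $k$-algebras (this follows directly from matching universal properties and is recorded in \cite[Chapter 4]{BokutKukin}), together with PBW splittings $U(L_i)=U(L_0)\oplus S_i$ as right $U(L_0)$-modules, where $S_i$ is free over $U(L_0)$ with basis the nontrivial PBW monomials in a chosen basis of a vector-space complement of $L_0$ in $L_i$. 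The classical Bergman--Shirshov normal form for ring coproducts then decomposes $U(L)$, as a right $U(L_0)$-module, into $U(L_0)$ plus alternating tensor products $S_{i_1}\otimes_{U(L_0)}\cdots\otimes_{U(L_0)} S_{i_n}$ with $i_1\ne i_2\ne\cdots\ne i_n$. Since $I(L_i) = I(L_0)\oplus S_i$, one reads off $I_i = I_0 + S_i\cdot U(L)$, and inspection of reduced-word representatives yields $I_1\cap I_2\subseteq I_0$.

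The main obstacle is thus precisely this last step, $I_1\cap I_2\subseteq I_0$; everything else is formal bookkeeping. It rests on the classical normal form for amalgamated free products of associative algebras, which in turn requires the injectivity of the natural Lie algebra embeddings $L_i\hookrightarrow L_1*_{L_0}L_2$, already implicit in the definition of the amalgam.
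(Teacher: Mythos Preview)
Your argument is correct, but it follows a genuinely different route from the paper's. The paper does not work directly with the identifications $k\otimes_{U(L_i)}U(L)\cong U(L)/I_i$ or with the normal form for associative amalgams. Instead, it quotes Feldman's short exact sequence
\[
\mathcal{C}_1:\quad 0\to L_0U(L)\to L_1U(L)\oplus L_2U(L)\to LU(L)\to 0
\]
(which, after the identification $L_iU(L)=I_i$, is \emph{precisely} the pair of identities $I_1+I_2=I(L)$ and $I_1\cap I_2=I_0$ that you isolate), embeds $\mathcal{C}_1$ into the trivially exact sequence $\mathcal{C}:0\to U(L)\to U(L)^2\to U(L)\to 0$, and then applies the $3\times 3$ lemma to conclude that the quotient complex $\mathcal{C}/\mathcal{C}_1$, which is exactly the sequence in the statement, is exact.

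So the paper outsources the hard step to Feldman and finishes with a clean diagram chase, whereas you re-prove Feldman's result from scratch via the Bergman--Shirshov normal form for $U(L_1)*_{U(L_0)}U(L_2)$. Your route is more self-contained and makes the role of the amalgam structure explicit; the paper's route is shorter and more conceptual once Feldman's sequence is granted. One comment: your final step, ``inspection of reduced-word representatives yields $I_1\cap I_2\subseteq I_0$'', carries all of the actual content and deserves more than a sentence. You need to track how left multiplication by $S_1$ (respectively $S_2$) interacts with the alternating-word decomposition, since $S_i\cdot S_i$ lands in $U(L_0)\oplus S_i$ rather than in $S_i$ alone; the argument goes through, but it is not a one-line inspection.
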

\begin{proof} Note that by \cite[Def.~ 4.2.1, Thm.~ 4.4.2]{BokutKukin}, the canonical maps $L_1,L_2\to L_1*_{L_0}L_2$ are injective.
By \cite[Prop.~3]{Feldman}  there is a short exact sequence of left $U(L)$-modules
$$
0 \to U(L) L_0  \Vightarrow{\partial_1}  U(L) L_1 \oplus U(L) L_2 \Vightarrow{\partial_0} U(L) L \to 0
$$
given by $$\partial_1(\lambda_0) = (\lambda_0, - \lambda_0) \hbox{ and } \partial_0 (\lambda_1, \lambda_2) = \lambda_1 + \lambda_2 \hbox{ for } \lambda_i \in U(L) L_i, ~ 0 \leq i \leq 2,  $$
where we consider $L_0$ as a Lie subalgebra of both $L_1$ and $L_2$ and consider  $L_1$ and $L_2$ as Lie subalgebras of $L$. 
Similarly there is a  short exact sequence of right $U(L)$-modules
$$
{\mathcal C}_1 : 0 \to  L_0 U(L) \to  L_1  U(L) \oplus  L_2  U(L) \to  L U(L) \to 0.
$$
The short exact sequence ${\mathcal C}_1$ can be embedded in a short exact sequence of right $U(L)$-modules
$$
{\mathcal C} : 0 \to U(L) \to U(L) \oplus U(L) \to U(L) \to 0$$
sending $\lambda \in U(L)$ to $(\lambda, - \lambda)$, and $(\lambda_1, \lambda_2) \in U(L) \oplus U(L)$ to $\lambda_1 + \lambda_2$. Thus we have a short exact sequence of complexes of right $U(L)$-modules
$$
0 \to {\mathcal C}_1 \to {\mathcal C} \to {\mathcal C}/ {\mathcal C}_1 \to 0
$$
By the $3 \times 3$ lemma \cite[Exer.~2.32]{Rotman} since  two of the complexes above are exact ( namely ${\mathcal C}_1$ and  ${\mathcal C}$) we deduce that the third complex 
$${\mathcal C}/ {\mathcal C}_1 : 0 \to k \otimes_{U(L_0)} U(L) \to 
 (k \otimes_{U(L_1)} U(L))  \oplus  (k \otimes_{U(L_2)} U(L)) \to  k \to 0  
$$ is exact. This is exactly the complex from the statement.
\end{proof}

\subsection{HNN-extensions of Lie algebras  and the associated short exact sequence}  \label{sec:HNN}

HNN extensions of Lie algebras were considered by Lichtman-Shirvani in \cite{L-S} and independently by Wasserman in \cite{Wasserman}.

 Let $L$ be a Lie algebra and take $A \leq L$ a Lie subalgebra.  
A {\it derivation} $d:A \to L$ is a $k$-linear map such that
$$d([a,b])=[a,d(b)]+[d(a),b]$$
for any $a,b\in A$. Given a derivation $d : A \to L$ we set
\begin{equation}\label{eq:HNN} W =\langle L ,t | [t,a]=d(a)\text{ for }a\in  A\rangle.\end{equation}
We call $W$ a Lie algebra {\it  HNN-extension} with  {\it base} Lie subalgebra $L$,   {\it associated } Lie subalgebra  $A$ and {\it stable letter} $t$.
By \cite{Wasserman} the canonical map $ L \to W$ is injective.

\begin{proposition} \label{lem:HNN} Let $W$ be an HNN extension Lie algebra $W = \langle L, t \mid [t,a] = d(a) \hbox{ for } a \in A \rangle$. Then there is a short exact sequence of right $U(W)$-modules
	$$ 0 \to k\otimes_{U(A)}U(W)   \Vightarrow{\alpha} k\otimes_{U(L)}U(W) \Vightarrow{\beta} k \to 0$$
	given by $\alpha(1\otimes \lambda) = 1\otimes t\lambda$ and $\beta (1\otimes \lambda) = \epsilon(\lambda)$, where $\epsilon : U(W)\to k$ is the augmentation map. 
\end{proposition}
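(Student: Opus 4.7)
The plan is to verify the required claims directly, in close parallel with (but simpler than) the proof of Proposition \ref{amalgam*}. First I would check that $\alpha$ is a well-defined right $U(W)$-module map: for $a \in A$, the HNN relation $ta = at + d(a)$ in $U(W)$, combined with $a, d(a) \in L$, gives $1 \otimes ta\lambda = 1 \otimes (at+d(a))\lambda = 0$ in $k \otimes_{U(L)} U(W)$, matching the vanishing of $1 \otimes a\lambda$ in $k \otimes_{U(A)} U(W)$; right $U(W)$-linearity is immediate from the formula. Both $\beta\alpha = 0$ (because $\epsilon(t) = 0$) and the surjectivity of $\beta$ are trivial.

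For exactness at the middle, the key observation is that the augmentation ideal of $U(W)$ decomposes as a sum of right ideals $\omega(W) = L\, U(W) + t\, U(W)$. Indeed, $W$ is generated as a Lie algebra by $L \cup \{t\}$, and a short induction using the identity $[t, w] = tw - wt$ together with the closure properties $LU(W) \cdot t \subseteq LU(W)$ and $tU(W) \cdot t \subseteq tU(W)$ shows that every element of $W$ already lies in $LU(W) + tU(W)$. Consequently, any $\lambda$ with $\epsilon(\lambda) = 0$ can be written as $\lambda = \ell + t\mu$ with $\ell \in LU(W)$ and $\mu \in U(W)$, whence $1 \otimes \lambda = 1 \otimes t\mu = \alpha(1 \otimes \mu)$.

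The main obstacle is the injectivity of $\alpha$, which reduces to the claim $\{\lambda \in U(W) : t\lambda \in LU(W)\} = AU(W)$. The inclusion from right to left is immediate from the HNN relation, since $ta\mu = (at + d(a))\mu \in LU(W)$ for $a \in A$ and $\mu \in U(W)$. The opposite inclusion, which is the technical heart of the argument, I would extract from the PBW-type structure theory for HNN-extensions of Lie algebras developed by Wasserman \cite{Wasserman} and Lichtman--Shirvani \cite{L-S}: choosing a basis of $W$ extending bases of $A$ and of $L$, one obtains a $k$-basis of $U(W)$ (using that $U(W)$ is an integral domain and that $L$ embeds in $W$) in which left multiplication by $t$ has a controlled effect, and comparing the resulting normal form of $t\lambda$ against that of a generic element of $LU(W)$ forces $\lambda$ into $AU(W)$. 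Equivalently, one could aim to establish from the same references an HNN-analogue of Feldman's sequence, namely a short exact sequence of right $U(W)$-modules $0 \to AU(W) \to LU(W) \oplus tU(W) \to \omega(W) \to 0$ with middle map $(\ell,s) \mapsto \ell+s$ and left map $a\mu \mapsto (-ta\mu,\, ta\mu)$, and then deduce the proposition from it via a $3\times 3$-lemma argument as in Proposition \ref{amalgam*}.
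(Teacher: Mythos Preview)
Your outline through exactness at the middle term is correct and matches the paper's argument almost verbatim; in particular, the paper also reduces injectivity of $\alpha$ to precisely the claim you isolate, namely that $t\lambda\in LU(W)$ forces $\lambda\in AU(W)$.

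Where you diverge is in how to prove this claim. You propose either (i) a PBW/normal-form argument drawn from \cite{Wasserman} or \cite{L-S}, or (ii) building an HNN-analogue of Feldman's sequence and running a $3\times 3$-lemma. The paper does neither. Instead it presents $W=F/R$ with $F$ free on a generating set $\Gamma_1$ of $L$ (containing a generating set $\Gamma_2$ of $A$) together with $t$, and from the standard resolution $0\to\oplus_{\Gamma}U(F)\to U(F)\to k\to 0$ obtains, after applying $-\otimes_{U(R)}k$, an exact sequence
\[
0\to R_{\mathrm{ab}}\xrightarrow{\delta_1}\bigoplus_{\Gamma}U(W)\xrightarrow{\delta_0}U(W)\to k\to 0.
\]
If $t\lambda=\sum_{\gamma\in\Gamma_1}\gamma\lambda_\gamma$, then $e_t\lambda-\sum e_\gamma\lambda_\gamma\in\ker\delta_0=\im\delta_1$, and since $R$ is generated by relators of $L$ (which map into $\oplus_{\Gamma_1}e_\gamma U(W)$) and by $[t,a]-d(a)$ for $a\in\Gamma_2$ (whose $e_t$-component lies in $e_tAU(W)$), one reads off $\lambda\in AU(W)$ directly.

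Your route (i) is plausible in principle but is not, as written, a proof: since $t$ has no prescribed commutation relation with elements of $L\setminus A$, left multiplication by $t$ on a PBW basis is not ``controlled'' in any obvious way, and neither \cite{Wasserman} nor \cite{L-S} states the claim in a form you can simply quote. Route (ii) would work if you could establish the short exact sequence $0\to AU(W)\to LU(W)\oplus tU(W)\to \omega(W)\to 0$, but the injectivity of its left map is exactly the claim you are trying to prove, so this is circular unless you supply an independent argument. The paper's free-presentation method sidesteps both issues and is entirely self-contained; I would recommend adopting it.
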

\begin{proof} The fact that $\beta$ is surjective is obvious. It is also obvious that $\im\alpha\subseteq\ker\beta$. Consider the augmentation map $\epsilon : U(W)\to k$. Its kernel is the augmentation ideal $WU(W) =  U(W) W$, so we have a short exact sequence 
	$$0\to W U(W) \to U(W) \Vightarrow{\epsilon} k\to 0.$$
	Applying the right exact functor $k\otimes_{U(L)} -$ to this sequence yields an exact sequence
	$$k\otimes_{U(L)}(WU(W))\to k\otimes_{U(L)}U(W)\buildrel\beta\over\to k\to 0$$
	and we deduce that $\ker\beta$ is the image of $k\otimes_{U(L)}(WU(W))$. Take any element $0\neq b \in \ker(\beta)$. It must be
	in the image of $k\otimes_{U(L)}(WU(W))$, so it is expressible
	as a sum of monomials of the form $1\otimes \lambda$, where $\lambda$ is a word written associatively in the generators of $W$. As the generators of $W$ are the generators of $L$ together with the element $t$, we see that  for each
	$1\otimes \lambda=0$ in $k\otimes_{U(L)}U(W)$ we may assume $\lambda=t\lambda_1$ for some other monomial $\lambda_1$ in the generators of $W$. This means that the element $b$ is a sum of monomials of the form $1\otimes t\lambda_1$ so it lies in $\im(\alpha)$.
	
	We have to show that $\alpha$ is a monomorphism. Note that this will follow if we prove this: 
	
	\medskip
	{\bf Claim.} {\it  For any $\lambda\in U(W)$ such that $t\lambda\in LU(W)$, we have $\lambda\in AU(W)$.}
	
	\medskip
	Let $\Gamma_1$ be a (possibly infinite) generating set of the Lie algebra $L$ containing a generating set $\Gamma_2$ of  the Lie algebra $A$. Set $\Gamma = \Gamma_1\cup\{t\}$. Let $F$ be a free Lie algebra (over the field $k$) with  free basis $\Gamma$. Then $F$ subjects onto $W$ via a map that is the identity on $\Gamma$. Let $R$ be the kernel of this surjection, hence $R$ is an ideal of $F$ such that $W=F/R$. Consider the standard resolution of the trivial right $U(F)$-module $k$ 	
	\begin{equation} \label{ses*} 0\to\oplus_{\Gamma}U(F)\to U(F)\to k\to 0 \end{equation} 
	and apply the functor $- \otimes_{U(R)}k$. Observe that $U(F)$ is free as $U(R)$-module (via multiplication) and that
	$U(W)=U(F)\otimes_{U(R)}k$. Therefore by the long exact sequence in homology associated to (\ref{ses*}) we get an exact sequence
	$$0=\Tor^{U(R)}_1(U(F),k)\to\Tor^{U(R)}_1(k,k)\to\oplus_{\Gamma}U(W)\to U(W)\to k\to 0$$
	and as $\Tor^{U(R)}_1(k,k) \simeq R/[R,R]=R_{\text{ab}}$ the exact sequence above is
	\begin{equation} \label{exact*} 0\to R_{\text{ab}}\buildrel{\delta_1}\over\to\oplus_{\Gamma}U(W)\buildrel{\delta_0}\over\to U(W)\to k\to 0.\end{equation}
	We analyze the map $\delta_0$. We denote $$\oplus_{\Gamma}U(W)=\oplus_{\gamma\in\Gamma_1}e_{\gamma}U(W)\oplus e_tU(W),$$ then $\delta_0(e_\gamma)=\gamma$ for $\gamma \in \Gamma_1$ and $\delta_0(e_t)=t$.
	Now, let $\lambda\in U(W)$ be such that $t\lambda\in LU(W)$. This means that
	$$t\lambda=\sum_{\gamma \in \Gamma_1} \gamma\lambda_{\gamma}$$
	with $\lambda_{\gamma} \in U(W)$. Then
	$$\delta_0(e_t\lambda-\sum_{\gamma\in\Gamma_1}e_\gamma\lambda_\gamma)=t\lambda-\sum_{\gamma\in\Gamma_1} \gamma\lambda_\gamma=0,$$
	so by the exactness of (\ref{exact*}) 
	$$e_t\lambda-\sum_{\gamma\in\Gamma_1}e_\gamma\lambda_\gamma=\delta_1(\omega)$$
	for some $\omega\in R_{\text{ab}}.$ 
	
	Note that any $r \in R$ can be writen as a linear combination of Lie monomials on $\Gamma$ and then in $U(W)$ we can write $r$ as $\sum_{i_1, \ldots, i_k} k_{i_1, \ldots, i_k} \gamma_{i_1} \ldots \gamma_{i_k}$.  Therefore $$\delta_1(r + [R,R]) = \sum_{i_1, \ldots, i_k} k_{i_1, \ldots, i_k} e_{\gamma_{i_1}} \gamma_{i_{2}} \ldots \gamma_{i_k}$$
	Recall that the ideal $R$ is generated by the relators of $L$  and by relators of the form $[t,a]-d(a)$ for $a\in \Gamma_2$. By the above description $\delta_1$ sends the images of the relators of $L$ to elements of $\sum_{\gamma \in \Gamma_1} e_{\gamma} U(W)$.
	Since  $[t,a]-d(a)=ta-at-d(a)$ we deduce that for $r_0 = ([t,a]-d(a)) \circ v$ where $\circ$ denotes the adjoint action, $v \in U(W)$ and  $a \in \Gamma_2$ we have
	$$\delta_1(r_0 +[R,R]) \in e_t av - e_a tv   + \sum_{\gamma \in \Gamma_1} e_{\gamma} U(W).$$ Hence
	$$
	e_t\lambda-\sum_{\gamma\in\Gamma_1}e_\gamma\lambda_\gamma= \delta_1(\omega) \in \sum_{a \in \Gamma_2} (e_a t - e_t a) U(W) + \sum_{\gamma \in \Gamma_1} e_{\gamma} U(W) \subseteq $$ $$e_t A U(W) + \sum_{\gamma \in \Gamma_1} e_{\gamma} U(W) = e_t A U(W) \oplus ( \oplus_{\gamma \in \Gamma_1} e_{\gamma} U(W))
	$$
So $\lambda\in AU(W)$ as we wanted to prove.
	
\end{proof}

\subsection{Fundamental Lie algebra of a graph of Lie algebras and the associated short exact sequence}\label{definitionfundamental}

 A {\it  finite graph of Lie algebras} $\Delta$ is a set
$$\Delta = \Delta (V(\Delta),E(\Delta), \Gamma, T,\{L_v\}_{v\in V}, \{L_e\}_{e\in E(\Gamma)},\{ \sigma_e \}_{e \in E(\Gamma)},\{\tau_e\}_{e\in E(T)}, \{ d_e\}_{e \in E(\Gamma)\setminus E(T) })$$
with the following data. We have finite sets $V(\Delta)$ (vertices) and $E(\Delta)$ (edges) so that elements of $E(\Delta)$ are ordered pairs $e=(v,w)$ in $V(\Delta)\times V(\Delta)$. For an edge $e\in E(\Delta)$ we denote by $\sigma(e)$ (resp. $\tau(e)$) the beginning (resp. the end of $e$). As usual $\overline{e}$ is the  inverse edge so that $\tau(\overline{e}) = \sigma(e)$ and $\sigma(\overline{e}) = \tau(e)$ and whenever $e\in E(\Delta)$, then also $\bar{e}\in E(\Delta)$. A loop is an edge such that $\bar e=e$.
 $\Gamma$ is an underlying oriented finite  graph. It is a graph with vertex set $V(\Gamma)=V(\Delta)$ and as edge set $E(\Gamma)$ we choose exactly one of each pair $\{e,\bar e\}$ so that all loops are in $\Gamma$.   We fix  a maximal  forest $T$ in $\Gamma$ i.e. $T$ is a subgraph of $\Gamma$ which is a disjoint union of trees such that $V(T)=V(\Gamma)$ and which is maximal under these conditions. For every vertex $v \in V(\Gamma)$ we have a Lie algebra $L_v$ and for every edge $e\in E(\Gamma)$ we have a Lie algebra $L_e$ and a
  monomorphism of Lie algebras $$\sigma_e : L_e \to L_{\sigma(e)}.$$
  Moreover, if $e \in E(T)$ we also have a monomorphism
  $$\tau_e : L_e \to L_{\tau(e)}$$ 
  and if $e \in E(\Gamma)\setminus E(T)$ we have a derivation 
  $$d_e : L_e \to L_{\tau(e)}.$$

Associated to a graph of Lie algebras as before, consider the Lie algebra given by the presentation (in terms of generators and relators)
$$
L_{T} = \langle  \cup_{v \in V(\Gamma)} L_v \mid  \sigma_e(a) = \tau_e(a) \hbox{ for } e \in E(T), a \in L_e \rangle
$$
i.e. we make a free product with amalgamation for every edge e of $T$.  We define the fundamental Lie algebra of the graph of Lie algebras $\Delta$ as the Lie algebra given by the presentation
$$
L_{\Delta}  = \langle L_T,  \{ e \}_{e \in A} \mid  [e, \sigma_e(a)] = d_e (a)
 \hbox{ for } e \in A, a \in  L_e \rangle
$$
Note that in the definition of the corresponding notion for groups the choice of the maximal forest is not important as changing the maximal tree produces an isomorphic group. In fact, there is more symmetry in the group case since instead of derivations $d_e$ we have monomorphisms. In the Lie algebra case the situation is not symmetric  since derivations are not homomorphisms of Lie algebras and in the original graph of Lie algebras $\Delta$ the maximal forest $T$  and the orientation  of the edges in $E(\Gamma)\setminus E(T)$ has to be fixed explicitly  in order to define the derivations $d_e$. The precise orientation of the edges in $T$ is not important, though.

\begin{lemma} Let  $\Delta $ be a  finite graph of Lie algebras with fundamental Lie algebra $L_{\Delta}$. Then for any vertex $v$ the map $L_v \to L_{\Delta}$ is a monomorphism.
\end{lemma}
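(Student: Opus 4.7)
The plan is to realize $L_\Delta$ as an iterated combination of amalgamated free products (for the edges of the spanning forest $T$) followed by HNN extensions (for the remaining edges of $\Gamma$), and then invoke the known injectivity results for these two basic constructions.

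First, I would show that every vertex algebra $L_v$ embeds in $L_T$. Proceed by induction on the number of edges of $T$. When $E(T) = \emptyset$, the algebra $L_T$ is (by its presentation) the free product $*_{v \in V(\Gamma)} L_v$, in which each $L_v$ embeds. For the inductive step, pick a leaf edge $e_0 \in E(T)$ with endpoints $v_0 = \sigma(e_0)$ and $w_0 = \tau(e_0)$, where $w_0$ is the leaf. Let $T' = T \setminus \{e_0\}$ and let $L_{T'}$ be the Lie algebra built from the graph of Lie algebras obtained by removing $e_0$ (and leaving $w_0$ as an isolated vertex with its original algebra $L_{w_0}$). By the inductive hypothesis, each $L_v$ embeds in $L_{T'}$, and matching the defining presentations one sees that
$$L_T \;\cong\; L_{T'} *_{L_{e_0}} L_{w_0} / \bigl(\text{identification along }\sigma_{e_0},\tau_{e_0}\bigr),$$
i.e.\ $L_T$ is the amalgamated free product of $L_{T'}$ and $L_{w_0}$ over $L_{e_0}$ (via $\sigma_{e_0}$ composed with the embedding $L_{v_0}\hookrightarrow L_{T'}$, and via $\tau_{e_0}$). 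By the result of Bokut--Kukin cited at the start of the proof of Proposition \ref{amalgam*}, the canonical maps from both factors of an amalgamated free product are injective. So $L_{T'}$ and $L_{w_0}$ both embed in $L_T$, and composing with the inductive embedding $L_v \hookrightarrow L_{T'}$ gives the required embedding of each $L_v$ into $L_T$.

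Next, I would adjoin the non-tree edges one at a time. Enumerate $E(\Gamma) \setminus E(T) = \{e_1, \dots, e_n\}$ and set $M_0 = L_T$ and, for $i \geq 1$,
$$M_i = \bigl\langle M_{i-1},\, e_i \;\bigm|\; [e_i, \sigma_{e_i}(a)] = d_{e_i}(a),\ a \in L_{e_i}\bigr\rangle.$$
Since $L_v \hookrightarrow L_T = M_0$ and in particular $\sigma_{e_i}(L_{e_i}) \subseteq L_{\sigma(e_i)} \hookrightarrow M_{i-1}$, the map $\sigma_{e_i}(a) \mapsto d_{e_i}(a)$ is a well-defined derivation from the subalgebra $\sigma_{e_i}(L_{e_i})$ of $M_{i-1}$ into $M_{i-1}$ (using that $\sigma_{e_i}$ is a monomorphism and that $d_{e_i}$ is a derivation from $L_{e_i}$). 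Hence $M_i$ is a genuine HNN extension of $M_{i-1}$ in the sense of \eqref{eq:HNN}. By the Wasserman result quoted in Subsection \ref{sec:HNN}, the base $M_{i-1}$ embeds in $M_i$. A comparison of presentations shows $M_n \cong L_\Delta$, and composing the chain of injections $L_v \hookrightarrow L_T = M_0 \hookrightarrow M_1 \hookrightarrow \cdots \hookrightarrow M_n = L_\Delta$ completes the proof.

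The only step that requires genuine care is verifying that the presentations of $L_T$ and $L_\Delta$ really do coincide with the iterative construction above; this is a routine check using the universal property of presentations, since the generators and relators on both sides match up after re-grouping, and in particular the orientation of edges in $T$ plays no role while the orientation of edges in $E(\Gamma) \setminus E(T)$ is precisely what determines which vertex algebra houses the range of the derivation $d_{e_i}$.
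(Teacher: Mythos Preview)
Your overall strategy is correct and very close to the paper's own argument: both build $L_\Delta$ iteratively from amalgamated free products (along tree edges) and HNN extensions (along the remaining edges), invoking the Bokut--Kukin and Wasserman injectivity results at each step. The paper organises the induction by \emph{contracting} a tree edge (replacing its two endpoints by a single vertex carrying their amalgamated product and shrinking the underlying graph), whereas you \emph{delete} a leaf edge; these are dual bookkeeping choices for the same idea. Your two-phase layout (first assemble $L_T$, then adjoin the HNN stable letters one by one) is arguably tidier than the paper's single induction on $|V(\Gamma)|+|E(\Gamma)|$.

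There is, however, one genuine slip in your inductive step. When you remove the leaf edge $e_0$ but \emph{retain the leaf vertex $w_0$} as an isolated vertex of $T'$, the algebra $L_{T'}$ already contains $L_{w_0}$ as a free factor, namely $L_{T'}\cong L_{T''}*L_{w_0}$ where $T''$ is $T'$ with $w_0$ deleted. Your claimed isomorphism $L_T \cong L_{T'} *_{L_{e_0}} L_{w_0}$ is then false: that amalgam adjoins a \emph{second} copy of $L_{w_0}$, and a comparison of presentations gives $L_{T'} *_{L_{e_0}} L_{w_0} \cong L_{w_0} * L_T$, not $L_T$. The fix is immediate: delete the leaf vertex $w_0$ together with the edge $e_0$, obtaining a smaller forest $T''$ on $V(\Gamma)\setminus\{w_0\}$; then $L_T \cong L_{T''} *_{L_{e_0}} L_{w_0}$ genuinely holds, and your induction (with base case a single vertex, or equivalently $E(T)=\emptyset$) goes through unchanged.
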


\begin{proof} This is known if $\Gamma$ is a single edge because in that case the fundamental Lie algebra is  either a free product with amalgamation or, in the case when the edge is a loop, an HNN extension. For HNN extensions it follows  by the work of Wasserman that the base of the HNN extension embeds in the HNN extension \cite{Wasserman}.  In the case of an amalgamated product of two Lie algebras each one of the two Lie algebras embeds in the amalgamated product by \cite[Thm.~4.4.2]{BokutKukin}.

In the general case we can induct on the size of $\Gamma$ which is $|V(\Gamma)| + | E(\Gamma)|$. Fix an edge $e_0$ of  $T$ with vertices $v_1$ and $v_2$.  Consider a new graph $\widetilde{\Gamma}$ obtained from $\Gamma$ by squashing the edge $e_0$ to a point $v_0$ and let $\widetilde{T}$ be the maximal forest of $\widetilde{\Gamma}$ that is obtained from $T$ by squashing the edge $e_0$ to $v_0$. Then there is a bijection between  $E(\Gamma)\setminus E(T)$ and its image $E(\widetilde{\Gamma})\setminus E(\widetilde{T})$ and 
	 there is an obvious way to construct a new graph of Lie algebras $\widetilde{\Delta}$ with  underlying oriented graph $\widetilde{\Gamma}$  where for $v \in V(\Gamma) \setminus \{ v_1, v_2 \}$ the associated Lie algebra in  $\widetilde{\Delta}$ is the old Lie algebra $L_v$ and $L_{v_0} = L_{v_1} *_{L_{e_0}} L_{v_{2}}$.  The set of edges $\widetilde{E}$ of $\widetilde{\Delta}$ is in bijection with $E(\Delta) \setminus \{ e_0, \overline{e}_0 \}$ and the edge Lie algebras in $\widetilde{\Delta}$ are defined  to be the old ones i.e. $\{ L_e  \mid e \in E \setminus \{ e_0, \overline{e}_0 \} \}$ and the monomorphisms and the derivations from the edge Lie algebras in $\widetilde{\Delta}$ to the corresponding vertex Lie algebras in $\widetilde{\Delta}$ are  the old ones from $\Delta$ combined if necessary with the embeddings of $L_{v_1}$ and $L_{v_2}$
in $L_{v_0} = L_{v_1} *_{L_{e_0}} L_{v_2}$.  Thus $L_{\Delta} = L_{\widetilde{\Delta}}$ and, as $\widetilde{\Gamma}$ has smaller size than $\Gamma$, by induction the canonical map from a Lie algebra associated to a vertex in $\widetilde{\Delta}$ to $L_{\widetilde{\Delta}}$ is injective. In particular this is the case for the Lie algebra $L_{v_0}$ and as $L_{v_1}$, $L_{v_2}$ also embed in $L_{v_0}$ we get the result.

Finally if $E(T) = \emptyset$ we deduce that  $E ( \Delta)$ contains only loops. Thus $\Gamma = \cup_i  \Gamma_i$  is a disjoint union of graphs where  each $V(\Gamma_i)$ contains precisely one vertex, say $v_i$, and we have the corresponding decomposition $\Delta = \cup_i \Delta_i$ of graphs of Lie algebras. Then  $L_{\Delta}$ is the free product of the Lie algebras $L_{\Delta_i}$ and each $L_{\Delta_i}$ is obtained from $L_{v_i}$ by applying several times HNN-extensions with stable letters corresponding to loops from $E(\Gamma_i)$. Finally $L_{v_i}$ embeds in $L_{\Delta_i}$ and $L_{\Delta_i}$ embeds in $L_{\Delta}$.
\end{proof}

We can prove now Theorem A from the introduction.

\begin{theorem} \label{exact1} Let $\Delta$ be  a  finite  graph of Lie algebras with  fundamental Lie algebra $L = L_{\Delta}$. Then there is a short exact sequence of right $U(L)$-modules
$$0 \to \oplus_{  e  \in  E(\Gamma)} \  k \otimes_{U(L_e)} U(L) \to \oplus_{v \in V(\Gamma)} k \otimes_{U(L_v)} U(L) \to k \to 0,$$
 %where  $\overline{E({\color{red}\Delta})} $ is an orientation of $E({\color{red}\Delta})$ i.e.  contains precisely one element  from every oriented par $\{ e, \bar{e} \}$.
\end{theorem}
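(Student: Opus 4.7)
The plan is to induct on the size $N = |V(\Gamma)|+|E(\Gamma)|$ of the underlying graph, paralleling the reduction strategy of the embedding lemma preceding the statement. The base case $N=1$ (a single vertex with no edges) is immediate since the sequence collapses to $0\to k\to k\to 0$; the two smallest nontrivial cases (one non-loop edge and one loop) are precisely Propositions \ref{amalgam*} and \ref{lem:HNN}. The key fact making the induction work is that by PBW, whenever $M$ is a Lie subalgebra of $L$ the algebra $U(L)$ is free (in particular flat) as a right $U(M)$-module, so that $-\otimes_{U(M)}U(L)$ preserves exactness.

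For the inductive step I would distinguish two cases. \emph{Case A:} there is a non-tree edge $e_0\in E(\Gamma)\setminus E(T)$. Deleting $e_0$ yields a smaller graph of Lie algebras $\widetilde\Delta$ such that $L_\Delta$ is the HNN-extension of $L_{\widetilde\Delta}$ with stable letter $e_0$ and associated subalgebra $\sigma_{e_0}(L_{e_0})$. Proposition \ref{lem:HNN} then gives
$$(\star)\quad 0\to k\otimes_{U(L_{e_0})}U(L_\Delta)\xrightarrow{\alpha_0}k\otimes_{U(L_{\widetilde\Delta})}U(L_\Delta)\to k\to 0,\quad \alpha_0(1\otimes\lambda)=1\otimes e_0\lambda,$$
while the inductive hypothesis on $\widetilde\Delta$, tensored with $U(L_\Delta)$ over $U(L_{\widetilde\Delta})$, gives
$$(\star\star)\quad 0\to\oplus_{e\ne e_0}k\otimes_{U(L_e)}U(L_\Delta)\to\oplus_v k\otimes_{U(L_v)}U(L_\Delta)\xrightarrow{\delta}k\otimes_{U(L_{\widetilde\Delta})}U(L_\Delta)\to 0.$$
\emph{Case B:} $E(\Gamma)=E(T)$ contains a non-loop edge $e_0$ with endpoints $v_1,v_2$. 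Contracting $e_0$ yields $\widetilde\Delta$ with a merged vertex $v_0$ carrying $L_{v_0}=L_{v_1}*_{L_{e_0}}L_{v_2}$, and $L_\Delta=L_{\widetilde\Delta}$. Now Proposition \ref{amalgam*} applied to $L_{v_0}$, tensored over $U(L_{v_0})$ with $U(L_\Delta)$, together with the inductive hypothesis on $\widetilde\Delta$, supply the analogues of $(\star)$ and $(\star\star)$.

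In both cases splicing the two SESs along the common middle term produces a four-term exact sequence; the kernel $K$ of $\oplus_v k\otimes_{U(L_v)}U(L_\Delta)\twoheadrightarrow k$ sits in an extension
$$0\to\oplus_{e\ne e_0}k\otimes_{U(L_e)}U(L_\Delta)\to K\to k\otimes_{U(L_{e_0})}U(L_\Delta)\to 0,$$
and the final step is to produce a splitting so that $K\cong\oplus_{e\in E(\Gamma)}k\otimes_{U(L_e)}U(L_\Delta)$. In Case A the splitting is canonical: send $1\otimes\lambda$ to $1\otimes e_0\lambda$ placed in the $\sigma(e_0)$-summand of $\oplus_v k\otimes_{U(L_v)}U(L_\Delta)$; this is a section because composing with $\delta$ recovers $\alpha_0$, and it lies in $K$ because $\epsilon(e_0)=0$. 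In Case B the splitting is assembled edge-by-edge, lifting the $k\otimes_{U(L_{v_0})}U(L_\Delta)$-component of each inductive edge map to either $k\otimes_{U(L_{v_1})}U(L_\Delta)$ or $k\otimes_{U(L_{v_2})}U(L_\Delta)$ according to the incidence structure of the \emph{original} $\Gamma$ (data lost after contraction). The main obstacle will be this construction of the splitting in Case B --- showing that the lifts dictated by the original $\Gamma$-incidence produce a genuine section landing in $\ker\beta$ --- which ultimately reduces to the compatibility of the inductive construction with the inclusions $L_{v_i}\hookrightarrow L_{v_0}$ for $i=1,2$.
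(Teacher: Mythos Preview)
Your inductive skeleton is the paper's: reduce by removing one edge at a time and splice the resulting short exact sequence with the inductive one. Two remarks.

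For the forest case you contract an edge while the paper deletes a leaf. Leaf deletion is the cleaner choice: if $T$ is obtained from $T_0$ by adding a leaf edge $e_0$ with new vertex $v_0$ and old endpoint $v_1\in V(T_0)$, then the amalgam map $\alpha'$ sends $1\otimes\lambda$ to $((1\otimes\lambda)_{v_0},-(1\otimes\lambda)_{L_0})$, and the second component lifts to $(-1\otimes\lambda)_{v_1}$ in $\oplus_{v\in V(\Gamma_0)}k\otimes_{U(L_v)}U(L)$; this lift is well-defined because $L_{e_0}$ embeds in \emph{both} $L_{v_0}$ and $L_{v_1}$. With leaf deletion there is nothing to ``reconstruct from the original incidence data,'' so what you flag as the main obstacle in Case~B simply disappears.

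The real problem is your Case~A. Your proposed section $1\otimes\lambda\mapsto(1\otimes e_0\lambda)_{\sigma(e_0)}$ is \emph{not well-defined} unless $e_0$ is a loop. For it to descend from $U(L_\Delta)$ to $k\otimes_{U(L_{e_0})}U(L_\Delta)$ you need $1\otimes e_0\,\sigma_{e_0}(a)\lambda=0$ in $k\otimes_{U(L_{\sigma(e_0)})}U(L_\Delta)$ for every $a\in L_{e_0}$. Now
\[
e_0\,\sigma_{e_0}(a)=\sigma_{e_0}(a)\,e_0+[e_0,\sigma_{e_0}(a)]=\sigma_{e_0}(a)\,e_0+d_{e_0}(a),
\]
and while $\sigma_{e_0}(a)\in L_{\sigma(e_0)}$ kills the first term, the derivation lands in $L_{\tau(e_0)}$, not $L_{\sigma(e_0)}$; so $1\otimes d_{e_0}(a)\lambda$ need not vanish in $k\otimes_{U(L_{\sigma(e_0)})}U(L_\Delta)$. (Take for instance two vertices $v_1,v_2$ with $L_{v_i}$ one-dimensional, a tree edge with trivial edge algebra, and a non-tree edge $e_0$ with $L_{e_0}\cong k$, $\sigma_{e_0}$ the identity into $L_{v_1}$, and $d_{e_0}$ nonzero into $L_{v_2}$: your formula fails already there.) Placing the section in the $\tau(e_0)$-summand, or in any fixed linear combination of the two, fails for the symmetric reason. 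So the splitting you call ``canonical'' in Case~A does not exist in general, and this --- not Case~B --- is where the genuine difficulty lies. The paper handles this step by simply writing ``gluing \ldots\ gives the exact sequence'' without exhibiting a section; you are right to scrutinise it, but you have put the obstacle in the wrong case and proposed a lift that does not work.
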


\begin{proof} It suffices to prove the result when the underlying graph is just one edge (i.e. consider two cases: free product with amalgamation and HNN extension), then use the definition with the maximal forest  and induction on the number of vertices. The two basic cases  free product with amalgamation and HNN extension were proved in Subsections \ref{prod} and \ref{sec:HNN}. We explain the details below.
	
\noindent {\bf Case 1}. Suppose $\Gamma = T$. We induct on the size of $T$. The induction starts with $T$ a vertex and the result is obvious in this case. Assume now that $T$ is obtained from a forest $T_0$ by adding  an edge $e_0$ 
such that $T$ and $T_0$ have the same number of connected components. Let $\Delta_0$ be the graph of Lie algebras with underlying graph $\Gamma_0  = T_0$ and vertex, edge Lie algebras as in $\Delta$ and the same monomorphisms as in $\Delta$. Then for $L_0 = L_{\Delta_0}$ and $L = L_{\Delta}$ we have $L = L_0 *_{L_{e_0}} L_{v_0}$, where $v_0$ is the vertex of $e_0$ that is not a vertex of $T_0$. This gives a short exact sequence of $U(L)$-modules
	\begin{equation} \label{ses-oberwolfach1} 0 \to k \otimes_{U(L_{e_0})} U(L) \to k \otimes_{U(L_{v_0})} U(L) \oplus k \otimes_{U(L_{0})} U(L) \to k \to 0\end{equation} 
	On the other hand by induction we know the  result holds true for $\Delta_0$ i.e. there is a short exact sequence of $U(L_0)$-modules
	$$
	0 \to \oplus_{  e \in E(\Gamma_0)} \  k \otimes_{U(L_e)} U(L_0) \to \oplus_{v \in V(\Gamma_0)} k \otimes_{U(L_v)} U(L_0) \to k \to 0$$ and by applying the exact functor $- \otimes_{U(L_0)} U(L)$ we get the short exact sequence
	\begin{equation} \label{ses-oberwolfach2}
		0 \to \oplus_{  e \in  E(\Gamma_0)} \  k \otimes_{U(L_e)} U(L) \to \oplus_{v \in V(\Gamma_0)} k \otimes_{U(L_v)} U(L) \to k \otimes_{U(L_0)} U(L) \to 0 \end{equation} 
		Gluing (\ref{ses-oberwolfach1}) and (\ref{ses-oberwolfach2}) along the module $k \otimes_{U(L_0)} U(L)$ we obtain a short exact sequence of $U(L)$-modules
		$$
			0 \to k \otimes_{U(L_{e_0})} U(L)  \oplus (\oplus_{  e \in E(\Gamma_0)} \  k \otimes_{U(L_e)} U(L)) \to$$ $$ k \otimes_{U(L_{v_0})} U(L)  \oplus ( \oplus_{v \in V(\Gamma_0)} k \otimes_{U(L_v)} U(L)) \to k  \to 0
		$$
		that is precisely  the result we want to prove. 
		
\noindent {\bf Case 2}. We induct on the size of $E(\Gamma) \setminus E(T)$, the case $\Gamma = T$ being the starting point of the induction. Assume for the inductive step that $\Gamma_0$ is a subgraph of $\Gamma$ that contains $T$ such that $\Gamma$ is obtained from $\Gamma_0$ by adding  an edge $e_0$. 
We can define as before $\Delta_0$ to be the graph of Lie algebras with underlying graph $\Gamma_0$ and all structural data: edge and vertex Lie algebras,  the structure monomorphisms and derivations as in $\Delta$. Then for $ L = L_{\Delta}$ and $L_0 = L_{\Delta_0}$ we have that $L$ is an HNN extension of $L_0$ with stable letter $e_0$ i.e.
		$$
		L = \langle L_0, e_0 \mid [e_0, \sigma_{e_0}(b)] = d_{e_0} (b) \hbox{ for } b \in L_{e_0} \rangle.
		$$
		Thus we have a short exact sequence of $U(L)$-modules				\begin{equation} \label{ses-oberwolfach3}
		0 \to k \otimes_{U(L_{e_0})} U(L) \to k\otimes_{U(L_0)} U(L) \to k \to 0.
		\end{equation} 
		By induction the result holds for $\Delta_0$, hence there is a short exact sequence of $U(L_0)$-modules that after applying the exact functor $- \otimes_{U(L_0)} U(L)$ yields the short exact sequence
		\begin{equation} \label{ses-oberwolfach4}
			0 \to \oplus_{  e \in E(\Gamma_0)} \  k \otimes_{U(L_e)} U(L) \to \oplus_{v \in V(\Gamma_0)} k \otimes_{U(L_v)} U(L) \to k \otimes_{U(L_0)} U(L) \to 0 \end{equation} 
			Gluing the exact sequences (\ref{ses-oberwolfach3}) and (\ref{ses-oberwolfach4}) along $k \otimes_{U(L_0)} U(L)$  gives the exact sequence 
			$$
			0 \to k \otimes_{U(L_{e_0})} U(L) \oplus (\oplus_{  e \in  E(\Gamma_0)} \  k \otimes_{U(L_e)} U(L)) \to \oplus_{v \in V(\Gamma_0)} k \otimes_{U(L_v)} U(L) \to k  \to 0,
			$$
			which is precisely the result we want to prove.
	\end{proof}

	If $L$ is a Lie algebra then $U(L)$ is a Hopf algebra with comultiplication  $\Delta : U(L) \to U(L) \otimes U(L)$  given by $\Delta(g)=1\otimes g+g\otimes 1$ for $g\in L$. This fact can be used to define a $U(L)$-module structure in the $k$-tensor product of two right  $U(L)$-modules $U$ and $V$ via
$$(u\otimes v)g:=ug\otimes v+u\otimes vg$$
for $g\in L$, $u\in U$ and $v\in V$. Moreover this combined with induction for a $U(T)$-module $W$ and  $T \leq L$ a Lie subalgebra yields an isomorphism of $U(L)$-modules
\begin{equation}\label{preMackey}(W\otimes_k V) \otimes_{U(T)} U(L)\cong (W \otimes_{U(T)} U(L))\otimes_k V.\end{equation}
For arbitrary elements we use the comultiplication $\Delta$ to define the isomorphism in (\ref{preMackey}), 
i.e. if for $\lambda \in U(L)$ we have
$$
\Delta(\lambda) = \sum_i \lambda_{i,1} \otimes \lambda_{i,2},
$$
where $\lambda_{i,j} \in U(L)$, the isomorphism from (\ref{preMackey}) is defined by
$$
(w \otimes v) \otimes  \lambda  \mapsto \sum_i (w \otimes  \lambda_{i,1} ) \otimes v \lambda_{i,2} .
$$
In particular, (\ref{preMackey}) implies  for $W = k$ that
\begin{equation}\label{Mackey}V \otimes_{U(T)} U(L)\cong (k \otimes_{U(T)} U(L))\otimes_k V.\end{equation}

Theorem \ref{exact1} implies the following corollaries.
\begin{corollary} \label{ses-homology} Let $L$ be a Lie algebra and  $V$ a  right $U(L)$-module.  Then there is a short exact sequence of right $U(L)$-modules
		\begin{equation}\label{secGeneral}0 \to \oplus_{  e \in  E(\Gamma)} V \otimes_{U(L_e)} U(L) \to \oplus_{v \in V(\Gamma)} V \otimes_{U(L_v)} U(L) \to V \to 0,\end{equation}
	and we get, for any  left $U(L)$-module $A$, a long exact sequence 
	$$ \to \oplus_{  e \in  E(\Gamma)}  \Tor_i^{U(L_e)}(V, A) \to  \oplus_{v \in V(\Gamma)}\Tor_i^{U(L_v)}(V, A) \to\Tor_i^L(V,A) \to \oplus_{  e \in  E(\Gamma)}  \Tor_{i-1}^{U(L_e)}(V, A) \to $$
	$$ \ldots \to\Tor_1^L(V,A) \to \oplus_{  e \in  E(\Gamma)} \Tor_0^{U(L_e)}(V, A) \to  \oplus_{v \in V(\Gamma)}\Tor_0^{L_v}(V, A) \to\Tor_0^L(V,A) \to 0,$$
	and an analogous long exact sequence of Ext functors.
	\end{corollary}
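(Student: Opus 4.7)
The plan is to deduce \eqref{secGeneral} from Theorem \ref{exact1} by tensoring that short exact sequence over $k$ with $V$. Since $k$ is a field, the functor $-\otimes_k V$ is exact, so one obtains a short exact sequence of $k$-vector spaces. Endow each term with the diagonal right $U(L)$-action coming from the Hopf comultiplication $\Delta(g)=1\otimes g+g\otimes 1$; the maps in Theorem \ref{exact1} being $U(L)$-linear, this promotes the sequence to one of right $U(L)$-modules. Then I apply the Mackey-type isomorphism \eqref{Mackey} term by term to identify $(k\otimes_{U(L_v)}U(L))\otimes_k V\cong V\otimes_{U(L_v)}U(L)$ and similarly for the edge terms, while the rightmost term $k\otimes_k V$ is naturally identified with $V$, because $L$ acts trivially on $k$ and hence the diagonal action on $k\otimes_k V$ collapses to the original right action on $V$. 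This produces \eqref{secGeneral}.

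For the long exact sequence of $\Tor$, I would apply $-\otimes_{U(L)}A$ to \eqref{secGeneral}. Since every $L_e$ and $L_v$ embeds into $L$ by the preceding lemma, the Poincar\'e--Birkhoff--Witt theorem implies that $U(L)$ is free both as a $U(L_e)$-module and as a $U(L_v)$-module. Hence, if $P_\bullet\to A$ is a projective resolution over $U(L)$, then each $P_i$ is flat over every $U(L_e)$ (respectively $U(L_v)$), so $P_\bullet$ restricts to a flat resolution of $A$ over these subalgebras. Combined with the obvious identification $(V\otimes_{U(L_e)}U(L))\otimes_{U(L)}P_i\cong V\otimes_{U(L_e)}P_i$, this yields the Eckmann--Shapiro isomorphism
\[
\Tor_i^{U(L)}\bigl(V\otimes_{U(L_e)}U(L),\,A\bigr)\cong\Tor_i^{U(L_e)}(V,A),
\]
and the analogue for vertex terms. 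Plugging these identifications into the long exact sequence of $\Tor$ functors attached to \eqref{secGeneral} produces the stated sequence.

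The parallel long exact sequence of $\Ext$ functors is obtained by applying $\Hom_{U(L)}(-,B)$ to \eqref{secGeneral} and using the induction-restriction adjunction $\Hom_{U(L)}(V\otimes_{U(L_e)}U(L),B)\cong\Hom_{U(L_e)}(V,B)$, together with the same freeness argument to derive this identification in all degrees as $\ext{i}{U(L)}{V\otimes_{U(L_e)}U(L)}{B}\cong\ext{i}{U(L_e)}{V}{B}$. The only point that genuinely needs verification is the compatibility of the $U(L)$-module structure on $V\otimes_{U(L_e)}U(L)$ obtained through \eqref{Mackey} with the usual right-multiplication structure; but this was precisely the content of the discussion immediately preceding the corollary, so no real obstacle remains and the argument reduces to bookkeeping.
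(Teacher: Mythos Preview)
Your proof is correct and follows essentially the same route as the paper: tensor the short exact sequence of Theorem \ref{exact1} over $k$ with $V$, use the Hopf-algebra diagonal action together with the Mackey isomorphism \eqref{Mackey} to obtain \eqref{secGeneral}, and then apply $-\otimes_{U(L)}A$ (respectively $\Hom_{U(L)}(-,B)$) and invoke Shapiro's lemma. Your explicit justification of the Shapiro step via PBW freeness of $U(L)$ over $U(L_e)$ and $U(L_v)$ spells out what the paper simply cites as ``a version of Shapiro's lemma for Lie algebras'', but the argument is otherwise the same.
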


	\begin{proof}

	Applying the exact functor $-\otimes_k V$ to the short exact sequence of Theorem \ref{exact1} together with (\ref{Mackey}) yields the short exact sequence
	(\ref{secGeneral}).  This induces,  for any left $U(L)$-module $A$, a long exact sequence in $\Tor$
	$$
	\ldots \to \Tor_i^{U(L)}(\oplus_{  e \in  E(\Gamma)} V \otimes_{U(L_e)} U(L), A)  \to \Tor_i^{U(L)} (\oplus_{v \in V(\Gamma)}  V \otimes_{U(L_v)} U(L), A) \to$$ $$ \Tor_i^{U(L)}(L,A) \to  \Tor_{i-1}^{U(L)}(\oplus_{  e \in  E(\Gamma)}  V \otimes_{U(L_e)} U(L), A )  \to \ldots$$
	Note that $\Tor_i^{U(L)}( \oplus_{  e \in  E(\Gamma)}  V \otimes_{U(L_e)} U(L), A) \simeq \oplus_{  e \in  E(\Gamma)}
	\Tor_i^{U(L)}(  V \otimes_{U(L_e)} U(L), A) $ and by a version of Shapiro Lemma for Lie algebras $\Tor_i^{U(L)}(V \otimes_{U(L_e)} U(L), A) \simeq \Tor_i^{U(L_e)}(V, A)$. Similarly 
	$\Tor_i^{U(L)}( \oplus_{v \in V(\Gamma)}  V \otimes_{U(L_v)} U(L), A) \simeq \oplus_{v \in V(\Gamma)} \Tor_i^{U(L_v)}(V, A)$. 
	and we  also get  a long exact sequence 
	$$ \to \oplus_{  e \in  E(\Gamma)}  \Tor_i^{U(L_e)}(V, A) \to  \oplus_{v \in V(\Gamma)}\Tor_i^{U(L_v)}(V, A) \to\Tor_i^{U(L)} (V,A) \to$$ $$ \oplus_{  e \in  E(\Gamma)}  \Tor_{i-1}^{U(L_e)}(V, A) \to  \ldots \to\Tor_1^{U(L)} (V,A) \to \oplus_{  e \in  E(\Gamma)} \Tor_0^{U(L_e)}(V, A) \to $$ $$ \oplus_{v \in V(\Gamma)}\Tor_0^{U(L_v)}(V, A) \to\Tor_0^L(V,A) \to 0.$$
	\end{proof}
	
	\begin{remark} In the particular case when $L=L_v*_{L_e}$ is an HNN-extension and the $U(L)$-module $V$ is $U(L)$, (\ref{secGeneral}) is 
		$$0 \to U(L) \otimes_{U(L_e)} U(L) \buildrel f\over\to U(L) \otimes_{U(L_v)} U(L) \to U(L) \to 0$$
		and one can check that the map $f$ is given by $f(1\otimes 1)=1\otimes t-t\otimes 1$. This map can be described as the linearization of the map that yields a similar short exact sequence for a ring HNN extension in \cite[(1) pg. 438]{Dicks}, where $1\otimes 1\mapsto 1\otimes 1-t\otimes t^{-1}$.
	\end{remark}
	
	\begin{corollary} \label{exact2} Let $\Delta$ be a graph of Lie algebras with  fundamental Lie algebra $L = L_{\Delta}$. Then for any  right $U(L)$-module $B$ and any  left $U(L)$-module $A$ there is 
		
		a) a long exact sequence in homology
		$$ \to \oplus_{  e \in  E(\Gamma)}  \Ho_i(L_e, A) \to  \oplus_{v \in V(\Gamma)} \Ho_i(L_v, A) \to \Ho_i(L,A) \to \oplus_{  e \in  E(\Gamma)}  \Ho_{i-1}(L_e, A) \to $$
		$$ \ldots \to \Ho_1(L,A) \to \oplus_{  e \in  E(\Gamma)} \Ho_0(L_e, A) \to  \oplus_{v \in V(\Gamma)} \Ho_0(L_v, A) \to \Ho_0(L,A) \to 0$$
		
		b) a long exact sequence in cohomology
		$$0 \to \Ho^0(L,B ) \to \oplus_{v \in V(\Gamma)} \Ho^0(L_v, B) \to  \oplus_{  e \in  E(\Gamma)}  \Ho^0(L_e, B) \to  \Ho^{1}(L, B) \to \ldots
		$$
		$$ \to  \Ho^i(L,B ) \to \oplus_{v \in V(\Gamma)} \Ho^i(L_v, B) \to  \oplus_{  e \in E(\Gamma)}  \Ho^i(L_e, B) \to  \oplus \Ho^{i+1}(L, B) \to \ldots$$	
	\end{corollary}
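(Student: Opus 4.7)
The plan is to derive both long exact sequences directly from the short exact sequence of right $U(L)$-modules supplied by Theorem \ref{exact1}:
$$0 \to \oplus_{e \in E(\Gamma)} k \otimes_{U(L_e)} U(L) \to \oplus_{v \in V(\Gamma)} k \otimes_{U(L_v)} U(L) \to k \to 0,$$
combined with a Shapiro-type identification of the (co)homology of the $U(L)$-modules $k\otimes_{U(L_w)}U(L)$ with the (co)homology of the subalgebras $L_w$, for $w\in V(\Gamma)\cup E(\Gamma)$. The Shapiro identification rests on the fact that $U(L)$ is free as a left (equivalently right) $U(L_w)$-module by the Poincaré--Birkhoff--Witt theorem, so the change-of-rings adjunction produces isomorphisms of derived functors without intervening spectral-sequence terms.

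For part (a), I would observe that the claim is essentially the special case $V=k$ of the homology long exact sequence already obtained in Corollary \ref{ses-homology}, since by definition $\Tor_i^{U(L_w)}(k,A)=\Ho_i(L_w,A)$ for each vertex or edge $w$, and likewise $\Tor_i^{U(L)}(k,A)=\Ho_i(L,A)$. No further work beyond specialising the corollary is needed; direct sums commute with $\Tor$, so the finite direct sums cause no difficulty.

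For part (b), the strategy mirrors part (a) but uses the contravariant long exact sequence of $\Ext$. Applying $\Hom_{U(L)}(-,B)$ to the short exact sequence of Theorem \ref{exact1} yields a long exact sequence
$$\cdots\to\Ext^i_{U(L)}(k,B)\to\Ext^i_{U(L)}\bigl(\oplus_v k\otimes_{U(L_v)}U(L),B\bigr)\to\Ext^i_{U(L)}\bigl(\oplus_e k\otimes_{U(L_e)}U(L),B\bigr)\to\Ext^{i+1}_{U(L)}(k,B)\to\cdots$$
Since $\Gamma$ is a finite graph the direct sums in the first variable are finite, so $\Ext$ distributes over them (finite direct sums coincide with finite direct products). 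The Shapiro identification in its Ext form,
$$\Ext^i_{U(L)}\bigl(k\otimes_{U(L_w)}U(L),B\bigr)\cong\Ext^i_{U(L_w)}(k,B)=\Ho^i(L_w,B),$$
follows from the tensor-hom adjunction $\Hom_{U(L)}(M\otimes_{U(L_w)}U(L),B)\cong\Hom_{U(L_w)}(M,B)$ together with PBW-freeness of $U(L)$ over $U(L_w)$. Combined with $\Ext^i_{U(L)}(k,B)=\Ho^i(L,B)$, this rewrites the Ext long exact sequence into the stated cohomology sequence. The only mild subtlety is the verification of Shapiro's lemma in the Ext setting and the observation that finiteness of the graph is what makes the direct sums behave correctly in the contravariant argument; neither presents a real obstacle.
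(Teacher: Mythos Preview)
Your proposal is correct and follows essentially the same approach as the paper: part (a) is exactly the specialization $V=k$ of Corollary \ref{ses-homology}, and part (b) is obtained, as you indicate, by applying $\Hom_{U(L)}(-,B)$ to the short exact sequence of Theorem \ref{exact1} and invoking the Ext form of Shapiro's lemma together with finiteness of the graph. The paper treats Corollary \ref{exact2} as an immediate consequence of Corollary \ref{ses-homology} (which already records the analogous Ext sequence), so your argument matches it.
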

	
	 A Lie algebra $L$ is of type $FP_n$ if the trivial $U(L)$-module $k$ is $FP_n$ i.e. there is a projective resolution of the module where all projectives are finitely generated in dimensions $\leq n$. We note that type $FP_1$ is equivalent to $L$ being finitely generated as a Lie algebra. If $L$ is finitely presented in terms of generators and relations then $L$ is $FP_2$. Whether the converse holds is an open problem.
	
\begin{lemma}\label{indFPn} Let $S\leq L$ be a Lie subalgebra of an arbitrary Lie algebra $L$. Then $S$ is of type $\FP_n$ if and only if the induced $U(L)$-module $k \otimes_{U(S)} U(L)$ is of type $\FP_n$. 
\end{lemma}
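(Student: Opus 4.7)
The plan is to prove both implications at once using the Poincar\'e--Birkhoff--Witt theorem, which provides that $U(L)$ is free, and in particular faithfully flat, as either a left or right $U(S)$-module: one picks a basis of $L$ extending a basis of $S$, and the resulting PBW monomials in a complement of $S$ in $L$ (including the empty monomial $1$) give a free basis of $U(L)$ over $U(S)$.

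For the forward implication, I would start from a projective resolution $P_\bullet\to k$ over $U(S)$ with $P_i$ finitely generated projective for $i\leq n$ and apply the functor $-\otimes_{U(S)}U(L)$. Flatness of $U(L)$ over $U(S)$ preserves exactness, and tensoring a f.g.\ free right $U(S)$-module of rank $r$ with $U(L)$ yields a f.g.\ free right $U(L)$-module of the same rank; direct summands descend accordingly. Hence $P_\bullet\otimes_{U(S)}U(L)\to k\otimes_{U(S)}U(L)$ is a projective resolution of the required form over $U(L)$.

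For the converse, I would induct on $n$; the case $n=0$ is immediate since $k$ is always cyclic over $U(S)$. For the inductive step, assume the result for $n-1$. From $M := k\otimes_{U(S)}U(L)$ being $FP_n$ over $U(L)$ it is in particular $FP_{n-1}$, so by induction $k$ is $FP_{n-1}$ over $U(S)$ and I can choose a partial resolution $P_{n-1}\to\cdots\to P_0\to k\to 0$ by f.g.\ projective $U(S)$-modules; set $K:=\ker(P_{n-1}\to P_{n-2})$. Tensoring with $U(L)$ produces a partial f.g.\ projective resolution of $M$ over $U(L)$ whose $n$-th kernel is $K\otimes_{U(S)}U(L)$, and the standard Bieri--Schanuel characterization of $FP_n$ forces this kernel to be finitely generated as a $U(L)$-module.

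The crux is to transfer finite generation back to $K$. I would pick finitely many $U(L)$-generators $g_1,\ldots,g_r$ of $K\otimes_{U(S)}U(L)$, express each as a finite sum of simple tensors, and let $K'\subseteq K$ be the $U(S)$-submodule generated by the finitely many $K$-entries that appear. Flatness gives an inclusion $K'\otimes_{U(S)}U(L)\hookrightarrow K\otimes_{U(S)}U(L)$ which, since it contains all the $g_i$ and is a $U(L)$-submodule, must be an equality; hence $(K/K')\otimes_{U(S)}U(L)=0$, and faithful flatness of $U(L)$ over $U(S)$ forces $K=K'$, so $K$ is f.g.\ and $k$ is $FP_n$ over $U(S)$. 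I do not foresee any substantive obstacle beyond careful bookkeeping of left/right module conventions; the only real input is PBW, applied once to get flatness for the exactness arguments and once more to get faithful flatness for the final descent step.
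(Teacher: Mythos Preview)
Your proposal is correct and follows essentially the same approach as the paper's proof: both directions use the exact induction functor $-\otimes_{U(S)}U(L)$, and the converse proceeds by induction on $n$, building a partial f.g.\ projective resolution over $U(S)$ and showing its $n$-th kernel is finitely generated. The only substantive difference is that you spell out the descent step---that finite generation of $K\otimes_{U(S)}U(L)$ over $U(L)$ forces finite generation of $K$ over $U(S)$---via faithful flatness from PBW, whereas the paper simply asserts ``hence $\Ker(d_n)$ is finitely generated as $U(S)$-module'' without elaboration; your argument here is more complete.
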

\begin{proof}  If $S$ is $\FP_n$ there is a  resolution $\mathcal{P}$ of the trivial $U(S)$-module  $k$ with finitely generated modules in dimensions $ \leq n$. Then ${\mathcal P} \otimes_{U(S)} U(L)$ is a resolution that shows that $k \otimes_{U(S)} U(L)$ is $\FP_n$ as $U(L)$-module. 

For the converse suppose we use induction on $n \geq 1$, the case $n = 1$ is easy. For the inductive step suppose that $n > 1$ and that the result holds for modules of type $FP_{n-1}$ and suppose that $k \otimes_{U(S)} U(L)$ is $FP_n$. By induction $S$ is $FP_{n-1}$. Then we have an exact complex of $U(S)$-modules
$${\mathcal P} : 0 \to Ker (d_n) \to P_{n-1} \Vightarrow{d_n} P_{n-2} \to \ldots \to P_0 \to k \to 0$$
where $k$ is the trivial $U(S)$-module and $P_i$ is projective and finitely generated for $i \leq n-1$. Then we have an exact complex
$${\mathcal P} \otimes_{U(S)} U(L) : 0 \to Ker (d_n) \otimes_{U(S)} U(L) \to P_{n-1} \otimes_{U(S)} U(L) \Vightarrow{d_n} P_{n-2}  \otimes_{U(S)} U(L) \to$$ $$  \ldots \to P_0 \otimes_{U(S)} U(L) \to k \otimes_{U(S)} U(L) \to 0$$
Since each $P_i \otimes_{U(S)} U(L)$ is a finitely generated projective module and $k \otimes_{U(S)} U(L)$ is $FP_n$ as $U(L)$-module we deduce by \cite[Prop.~4.3]{Brown} that $Ker (d_n) \otimes_{U(S)} U(L)$ is a fintely generated $U(L)$-module, hence $Ker(d_n)$ is finitely generated as $U(S)$-module and so $S$ is $FP_n$.

\end{proof}

\begin{corollary}
	 Let $\Delta$ be a graph of Lie algebras with fundamental Lie algebra $L = L_{\Delta}$.
	Then
	
	a) if  $L_e$ is $\FP_{m-1}$ for every $e \in E(\Gamma)$ and $L_v$ is  $\FP_{m}$ for every $v \in V(\Gamma)$ then $L$ is $\FP_{m}$; 
	
	b) if $L$ is $\FP_{m}$ and $L_e$ is $\FP_{m}$ for every $e \in E(\Gamma)$
	then $L_v$ is  $\FP_{m}$ for every $v \in V(\Gamma)$;
	
	c) if  $L$ is $\FP_{m}$ and $L_v$ is $\FP_{m}$ for every $v \in V(\Gamma)$ then $L_e$ is $\FP_{m-1}$ for every $e \in E(\Gamma)$.	
\end{corollary}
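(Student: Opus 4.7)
The plan is to apply the short exact sequence of right $U(L)$-modules
$$0 \to \bigoplus_{e\in E(\Gamma)} k\otimes_{U(L_e)}U(L) \to \bigoplus_{v\in V(\Gamma)} k\otimes_{U(L_v)}U(L) \to k \to 0$$
supplied by Theorem~\ref{exact1}, combined with Lemma~\ref{indFPn}, which translates the statement ``$L_v$ is $\FP_n$'' into ``$k\otimes_{U(L_v)}U(L)$ is $\FP_n$ as a $U(L)$-module'' (and analogously for the edge algebras $L_e$). Since the graph is finite and since the class of $\FP_n$-modules over any ring is closed under finite direct sums and under direct summands, the middle term of the sequence above is $\FP_n$ as a $U(L)$-module if and only if every $L_v$ is $\FP_n$, and similarly the left-hand term is $\FP_n$ if and only if every $L_e$ is $\FP_n$. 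In this way, each of (a), (b), (c) is reduced to a statement about the three-term exact sequence above with the quantifiers over $v$ and $e$ absorbed into a single module at each position.

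Each of the three statements is then an instance of the standard ``two-out-of-three'' principle for $\FP_n$ in a short exact sequence $0 \to A \to B \to C \to 0$ of modules over an arbitrary ring. Concretely, one uses the following three implications, each proved by induction on $m$ via a pull-back of $B\to C$ along a finitely generated free cover of $C$ (exactly as in the group-theoretic setting): if $A$ is $\FP_{m-1}$ and $B$ is $\FP_m$, then $C$ is $\FP_m$ (this yields (a), with $A$ the direct sum over edges, $B$ the direct sum over vertices, and $C=k$); if $A$ is $\FP_{m-1}$ and $C$ is $\FP_m$, then $B$ is $\FP_m$, from which, by closure under direct summands, each $k\otimes_{U(L_v)}U(L)$ is $\FP_m$ and hence each $L_v$ is $\FP_m$ (this yields (b)); and if $B$ is $\FP_m$ and $C$ is $\FP_m$, then $A$ is $\FP_{m-1}$, from which each $k\otimes_{U(L_e)}U(L)$ is $\FP_{m-1}$ and hence each $L_e$ is $\FP_{m-1}$ (this yields (c)).

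The main substance has already been provided by Theorem~\ref{exact1} and Lemma~\ref{indFPn}; the corollary itself is a purely homological consequence. The only mildly delicate point is the expected drop by one in the $\FP$-index between the edge algebras (contributing the left-hand term $A$) and the vertex or ambient algebras (contributing $B$ and $C$), which is forced by the position of $A$ at the beginning of the short exact sequence. No further obstacle is anticipated.
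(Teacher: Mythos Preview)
Your approach is exactly the paper's: apply Theorem~\ref{exact1}, translate via Lemma~\ref{indFPn}, and invoke the standard two-out-of-three behaviour of $\FP_n$ in a short exact sequence of modules (the paper cites \cite[Proposition~1.4]{Bieribook}). The reduction to a single short exact sequence and the use of closure under finite direct sums and summands are also as in the paper.

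There is, however, one genuine slip. For part~(b) you invoke the implication ``if $A$ is $\FP_{m-1}$ and $C$ is $\FP_m$, then $B$ is $\FP_m$''. This is \emph{not} one of the valid two-out-of-three statements and is false in general: take any ring $R$ admitting a finitely generated but not finitely presented module $M$ (for instance $R=k[x_1,x_2,\dots]$ and $M=k$), and consider $0\to M\to M\to 0\to 0$; here $A=M$ is $\FP_0$, $C=0$ is $\FP_m$ for all $m$, yet $B=M$ is not $\FP_1$. The correct implication for (b) is ``if $A$ is $\FP_m$ and $C$ is $\FP_m$, then $B$ is $\FP_m$'', and this is available since the hypothesis of (b) gives $L_e$ of type $\FP_m$ (not merely $\FP_{m-1}$), so the edge term $A$ is $\FP_m$. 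With that correction your argument matches the paper's and is complete.
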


\begin{proof} Let $R$ be any associative ring with 1 and  $0 \to A \to B \to C \to 0$ be a short exact sequence of $R$-modules. Then by \cite[Proposition 1.4]{Bieribook}
	
	1) if $A$ and $C$ are $\FP_m$ then $B$ is $\FP_m$;
	
	2) if $A$ is $\FP_{m-1}$ and $B$ is $\FP_m$ then $C$ is $\FP_m$;
	
	3) if $B$ and $C$ are $\FP_m$ then $A$ is $\FP_{m-1}$.
	
The Corollary is a consequence of the above statements applied for $R = U(L)$ together with Lemma \ref{indFPn} and the short exact sequence given by Theorem \ref{exact1}.

\end{proof}

  The following two results are well known for groups and extend also to Lie algebras.

  \begin{proposition}  \label{free0} \label{free-bass-serre} a) \cite{L-S} Let $L$ be a Lie algebra HNN extension with a base Lie subalgebra $L_1$, associated Lie subalgebra $L_0$ and stable letter $t$. Let $H$ be a Lie subalgebra of $L$ such that $H \cap L_0 = 0$ and $H \cap L_1$ is a free Lie algebra. Then $H$ is a free Lie algebra.
 	
 	b) \cite{Kukin} Let $L = L_1 *_{L_0} L_2$ be  an amalgameted product of Lie algebras and $H$ be a Lie subalgebra of $L$  such that $H \cap L_0 = 0$ and $H \cap L_i$ is free for $i = 1,2$. Then $H$ is a free Lie algebra.
\end{proposition}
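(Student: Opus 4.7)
The plan is to combine the short exact sequences of Propositions \ref{lem:HNN} and \ref{amalgam*} with the PBW/normal-form theory for HNN-extensions and amalgams of Lie algebras, so that the two classical results are recovered inside the homological framework of this paper.

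For part (a), I would start from the short exact sequence of right $U(L)$-modules
$$0 \to k \otimes_{U(L_0)} U(L) \to k \otimes_{U(L_1)} U(L) \to k \to 0$$
of Proposition \ref{lem:HNN} and restrict it along the inclusion $U(H) \hookrightarrow U(L)$. Using the PBW-type normal form for Lie-algebra HNN-extensions (Wasserman \cite{Wasserman}), $U(L)$ is a free right $U(L_1)$-module with basis indexed by reduced alternating expressions in the stable letter $t$ and a fixed transversal of $L_0$ in $L_1$. Adapting this basis to $H$ yields a Mackey-style splitting of each restricted module $k \otimes_{U(L_i)} U(L)\big|_H$ as a direct sum of $U(H)$-modules induced from intersections of $H$ with translates of $L_i$. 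Under the hypothesis $H \cap L_0 = 0$, the restriction of $k \otimes_{U(L_0)} U(L)$ becomes a free $U(H)$-module; under the hypothesis that $H \cap L_1$ is free, the restriction of $k \otimes_{U(L_1)} U(L)$ admits a free resolution over $U(H)$ coming from the augmentation ideal of $U(H \cap L_1)$. Splicing these gives a free resolution of the trivial $U(H)$-module $k$ of length one, hence $\cd(H) \le 1$.

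In characteristic zero this already forces $H$ to be free. In general characteristic, however, as recalled in the introduction (\cite{M-U-Z}), $\cd \le 1$ does not imply freeness, so an additional combinatorial step is required: a Nielsen/Schreier-style reduction on the normal forms extracts an explicit free generating set of $H$ from the above data, with the assumption $H \cap L_0 = 0$ used to rule out cancellations across $t$-syllables. This last step is the main obstacle, and is essentially the technical core of the proof in \cite{L-S}. Part (b) proceeds by the same blueprint, with Proposition \ref{amalgam*} replacing Proposition \ref{lem:HNN} and the Bokut-Kukin normal form for amalgamated free products (\cite[Chapter 4]{BokutKukin}) replacing Wasserman's; the two induced terms are now controlled by the free subalgebras $H \cap L_1$ and $H \cap L_2$, and the hypothesis $H \cap L_0 = 0$ again prevents the pathological reductions, so the argument of \cite{Kukin} yields the claim.
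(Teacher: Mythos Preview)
The paper does not give its own proof of this proposition: it is stated as a quotation of results from \cite{L-S} and \cite{Kukin}, with no argument supplied. So there is nothing to compare your argument against on the paper's side beyond the bare citations.

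That said, your proposal has a structural problem that the paper itself flags. Immediately after Theorem~\ref{free} the authors write that the freeness of $H$ ``cannot be proved by cohomological reasoning'' because of the Mikhalev--Umirbaev--Zolotykh example \cite{M-U-Z}: in characteristic $p\geq 3$ there exist non-free Lie algebras of cohomological dimension $1$. You acknowledge this yourself, and then say that the remaining step --- a Nielsen/Schreier reduction on normal forms to extract a free basis --- is ``essentially the technical core of the proof in \cite{L-S}'' (and likewise \cite{Kukin} for part (b)). But that step is the entire content of the proposition; the homological front end you build (restricting the short exact sequences of Propositions~\ref{amalgam*} and~\ref{lem:HNN} to $U(H)$ and attempting a Mackey-type splitting) does not shorten or replace it. In other words, your argument reduces to: establish $\cd(H)\leq 1$ via the paper's machinery, then invoke the original combinatorial proofs to upgrade this to freeness. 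The first half is redundant once you have the second.

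There is also a technical gap in the first half. The ``Mackey-style splitting'' of $k\otimes_{U(L_i)}U(L)$ restricted to $U(H)$ into modules induced from intersections of $H$ with translates of $L_i$ is a group-theoretic picture that does not transport directly to Lie algebras: there is no double-coset decomposition here, and the paper never proves such a restriction formula. Without it, you cannot conclude that the restricted modules are free (or induced from free) over $U(H)$, so even the inequality $\cd(H)\leq 1$ is not established by your sketch.
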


 As a consequence we have

\begin{theorem} \label{free} 
 Let $\Delta$ be a graph of Lie algebras with a fundamental Lie algebra $L = L_{\Delta}$. If $H$ intersects every edge Lie algebra $L_e$ trivially  and every vertex Lie algebra in a free Lie subalgebra then $H$ is a free Lie algebra.
 \end{theorem}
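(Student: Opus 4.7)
The plan is to induct on the size $|V(\Gamma)|+|E(\Gamma)|$ of the underlying graph of $\Delta$, reducing each step to one of the two basic cases established in Proposition~\ref{free-bass-serre}. The base case is $|V(\Gamma)|=1$ with $E(\Gamma)=\emptyset$, where $L=L_v$ and the conclusion $H=H\cap L_v$ free is immediate.

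For the inductive step, first suppose $E(\Gamma)\setminus E(T)\neq\emptyset$. Pick $e_0\in E(\Gamma)\setminus E(T)$ and let $\Delta_0$ be the graph of Lie algebras obtained by deleting $e_0$ and $\overline{e}_0$, keeping $T$ as the maximal forest. As in the proof of Theorem~\ref{exact1}, $L$ is an HNN-extension of $L_0:=L_{\Delta_0}$ with base $L_0$, associated subalgebra $L_{e_0}$, and stable letter $e_0$. The hypothesis on $H$ is inherited by $\Delta_0$, because the vertex Lie algebras and the remaining edge Lie algebras are unchanged; hence by induction $H\cap L_0$ is a free Lie algebra. Together with $H\cap L_{e_0}=0$, Proposition~\ref{free-bass-serre}(a) then yields that $H$ is free.

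Now suppose $\Gamma=T$. If $T$ has an edge and is connected, choose a leaf $v_0$ with its unique incident edge $e_0$, and let $\Delta_0$ be obtained by removing $v_0$ and the pair $\{e_0,\overline{e}_0\}$, with maximal forest $T\smallsetminus\{e_0,\overline{e}_0\}$. Then $L=L_{\Delta_0}*_{L_{e_0}}L_{v_0}$, the inductive hypothesis gives that $H\cap L_{\Delta_0}$ is free, and Proposition~\ref{free-bass-serre}(b), applied using $H\cap L_{e_0}=0$ and $H\cap L_{v_0}$ free, concludes that $H$ is free. If instead $T$ is disconnected, $L$ decomposes as a free product of the fundamental Lie algebras of its connected components (i.e., amalgamation over the zero Lie algebra); splitting off one component and iterating Proposition~\ref{free-bass-serre}(b) with trivial amalgam reduces to the connected case already handled.

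The only point requiring a small check is that the hypothesis on $H$ transfers to the smaller graph $\Delta_0$, which is automatic since the vertex and edge Lie algebras of $\Delta_0$ embed canonically into $L_\Delta$ and their intersections with $H$ coincide with those prescribed by the original hypothesis. No new idea beyond Proposition~\ref{free-bass-serre} and the inductive decomposition of $L_\Delta$ (already used in the proof of Theorem~\ref{exact1}) is needed, and I do not anticipate any serious obstacle.
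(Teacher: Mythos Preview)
Your proposal is correct and follows essentially the same approach as the paper, which simply states that the result follows by induction using Proposition~\ref{free-bass-serre} at each step. Your version is more detailed and uses the slightly cleaner induction parameter $|V(\Gamma)|+|E(\Gamma)|$ (the paper says ``number of vertices'', which strictly speaking does not decrease when stripping off an HNN edge), but the substance is identical.
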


\begin{proof} It follows by induction on the number of vertices of the underlying graph using for the inductive step the previous proposition.
	
	\end{proof}

Note that the above theorem cannot be proved by cohomological reasoning since by \cite{M-U-Z} if $\mathrm{char}(k) = p\geq 3$ there is a Lie algebra such that $U(L)$ is a free associative $k$-algebra, hence the projective dimension of the trivial $U(L)$-module $k$ is 1 but $L$ is not free. Furthermore  in \cite{Shirshov2} Shirshov showed the existence of Lie algebras such that their free Lie product has a Lie subalgebra that is not free, is not isomorphic to any subalgebra of any of the factors and cannot be decomposed as the free Lie product of any of its subalgebras. Thus we cannot have a very general Kurosh type decomposition theorem generalising  Theorem \ref{free}.

    \subsection{An application: a resolution of the trivial module for right angled Artin Lie algebras}\label{RAAGs}

Let $\Gamma$ be a finite simple graph with vertex set $V(\Gamma)$ and edge set $E(\Gamma)$, here simple means that there are no loops or double edges and we see each edge as an unordered pair $\{v,w\}$ with $v,w\in V(\Gamma)$. Associated to $\Gamma$ there is  a group called the {\it right angled Artin group} $G_\Gamma$ given by the presentation by generators and relations in the category of groups
$$G_\Gamma=\langle V(\Gamma)\mid [u,v]=1\text{ whenever }\{u,v\}\in E(\Gamma)\rangle$$
and also a $k$-Lie algebra  called the {\it right angled Artin Lie algebra $L_\Gamma$} given by the presentation by generators and relations in the category of Lie algebras
$$L_\Gamma=\langle V(\Gamma)\mid [u,v]=0\text{ whenever }\{u,v\}\in E(\Gamma)\rangle.$$
Recall that the flag complex $\Delta_{\Gamma}$  associated to $\Gamma$ is the  simplicial complex with simplices the (non-empty) ordered subsets of vertices of $\Gamma$ that span a complete subgraph. The cone $C\Delta_\Gamma$ is the complex that one gets allowing the subsets of vertices to be empty. We can linearly order the vertices in $V(\Gamma)$.  Consider the following complex 
\begin{equation}\label{complexArtin}
{\mathcal P}_{\Gamma} :  \ldots \Vightarrow{\partial_{n+1}} P_n \Vightarrow{\partial_n} P_{n-1} \Vightarrow{\partial_{n-1}} \ldots \Vightarrow{\partial_1} P_0 \Vightarrow{\partial_{0}}  k \to 0,\end{equation}
where 
$$P_n = \oplus_{w = \{ v_{i_1}, \ldots, v_{i_n} \} } c_{w} U(L)$$
and the direct sum is over the simplices of $C\Delta_\Gamma$, i.e., over all (possibly empty) subsets $w = \{ v_{i_1}, \ldots, v_{i_n} \} $ of $V(\Gamma)$ such that the subgraph of $\Gamma$ that they span is complete. Every $c_{w} U(L)$ is isomorphic to the free right $U(L)$-module and we have  for the differential after assuming $ v_{i_1} <  \ldots < v_{i_n}$ that
$$
\partial_n(c_w) =  \sum_r (-1)^{r-1} c_{w \setminus \{ v_{i_r} \} } v_{i_r},
$$
 $c_{\emptyset} = 1_K$, $P_0 = c_{\emptyset} U(L) = U(L)$ and $\partial_0$ is the augmentation map.
So (\ref{complexArtin})  is a Lie algebra version of the projective resolution associated to the universal cover of the Salvetti complex for right angled Artin groups. This complex  (\ref{complexArtin}) is the {\sl minimal resolution} of the Lie algebra $L_\Gamma$ and in particular is exact  (see \cite[proof of Theorem 1.2, pages 11 and 12]{BRS}). This fact can be understood as a Lie algebra version of the fact that for groups the Salvetti complex is a $K(G_{\Gamma}, 1)$. In the next result we derive it using the short exact sequence for a free product with amalgamation of Lie algebras,  note that our argument is different than the one in \cite{BRS} and was obtained independently.

\begin{proposition} \label{free-complex}
${\mathcal P}_{\Gamma}$ is a free resolution of the trivial $U(L)$-module $K$.
\end{proposition}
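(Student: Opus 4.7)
My plan is to proceed by induction on $|V(\Gamma)|$. The base case $V(\Gamma) = \emptyset$ is immediate: $L_\Gamma = 0$, $U(L) = k$, and $\mathcal{P}_\Gamma$ collapses to $0 \to k \Vightarrow{\mathrm{id}} k \to 0$, which is exact.

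For the inductive step, I would fix a vertex $v \in V(\Gamma)$, taken to be the smallest in the chosen linear order so that the signs in the simplicial boundary stay clean. Set $\Gamma' = \Gamma \setminus \{v\}$ and $\Sigma = \Gamma[\lk(v)]$, the induced subgraph on the neighbors of $v$; both have strictly fewer vertices than $\Gamma$, so by induction the complexes $\mathcal{P}_{\Gamma'}$ and $\mathcal{P}_\Sigma$ are resolutions. Because $U(L_\Gamma)$ is free (hence flat) as a left $U(L_{\Gamma'})$- and as a left $U(L_\Sigma)$-module by PBW, tensoring up preserves exactness and yields resolutions $\mathcal{A} := \mathcal{P}_{\Gamma'} \otimes_{U(L_{\Gamma'})} U(L_\Gamma)$ of $k \otimes_{U(L_{\Gamma'})} U(L_\Gamma)$ and $\mathcal{B} := \mathcal{P}_\Sigma \otimes_{U(L_\Sigma)} U(L_\Gamma)$ of $k \otimes_{U(L_\Sigma)} U(L_\Gamma)$.

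The next step is to decompose the basis of $\mathcal{P}_\Gamma$ in each degree according to whether the simplex contains $v$. Simplices avoiding $v$ are exactly the simplices of $C\Delta_{\Gamma'}$, and since their boundaries still avoid $v$ they form a subcomplex which I identify with $\mathcal{A}$. Simplices containing $v$ are of the form $\{v\} \cup w'$ with $w' \in C\Delta_\Sigma$, corresponding via $w \leftrightarrow w \setminus\{v\}$ to simplices of $C\Delta_\Sigma$ in one lower dimension. This gives a short exact sequence of complexes
\begin{equation*}
0 \to \mathcal{A} \to \mathcal{P}_\Gamma \to \mathcal{Q} \to 0,
\end{equation*}
where the quotient $\mathcal{Q}$ is, in each degree $n$, isomorphic to $\mathcal{B}_{n-1}$, with the quotient differential matching that of $\mathcal{B}$ shifted up by one degree, up to an overall sign that is immaterial for homology. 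The associated long exact sequence, combined with acyclicity of $\mathcal{A}$ and $\mathcal{B}$ above degree $0$, collapses to $H_n(\mathcal{P}_\Gamma) = 0$ for $n \geq 2$ and
\begin{equation*}
0 \to H_1(\mathcal{P}_\Gamma) \to k \otimes_{U(L_\Sigma)} U(L_\Gamma) \Vightarrow{\delta} k \otimes_{U(L_{\Gamma'})} U(L_\Gamma) \to H_0(\mathcal{P}_\Gamma) \to 0.
\end{equation*}
A direct chase on the connecting map (lifting the class of $1 \otimes \lambda$ to $c_{\{v\}} \cdot \lambda$ and applying $\partial$) shows that $\delta$ is, up to sign, given by $1 \otimes \lambda \mapsto 1 \otimes v \lambda$.

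To close the induction, the key observation is that since $v$ commutes with every element of $L_\Sigma$, the Lie algebra $L_\Gamma$ is realized as an HNN-extension of $L_{\Gamma'}$ with associated subalgebra $L_\Sigma$, stable letter $v$, and zero derivation: both sides share identical presentations by generators and relations. Applying Proposition \ref{lem:HNN} to this HNN structure produces precisely
\begin{equation*}
0 \to k \otimes_{U(L_\Sigma)} U(L_\Gamma) \Vightarrow{\alpha} k \otimes_{U(L_{\Gamma'})} U(L_\Gamma) \to k \to 0
\end{equation*}
with $\alpha(1 \otimes \lambda) = 1 \otimes v \lambda$, agreeing with $\delta$ up to sign. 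Therefore $\delta$ is injective with cokernel $k$, forcing $H_1(\mathcal{P}_\Gamma) = 0$ and $H_0(\mathcal{P}_\Gamma) = k$ and completing the induction. The main technical difficulty is the sign bookkeeping in identifying $\mathcal{P}_\Gamma / \mathcal{A}$ with the shifted $\mathcal{B}$ and in verifying that the connecting homomorphism matches $\alpha$; once that is settled, the rest follows cleanly from Proposition \ref{lem:HNN} and the long exact sequence machinery.
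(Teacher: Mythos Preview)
Your proof is correct and takes a genuinely different route from the paper's. The paper inducts by splitting a non-complete $\Gamma$ as $\Gamma_1\cup\Gamma_2$ along a full subgraph $\Gamma_0$, realizing $L_\Gamma = L_{\Gamma_1} *_{L_{\Gamma_0}} L_{\Gamma_2}$ as an amalgamated free product and invoking Proposition~\ref{amalgam*}; the base case is the complete graph, handled by the classical Koszul resolution. You instead peel off a single vertex $v$, realize $L_\Gamma$ as an HNN-extension of $L_{\Gamma'}$ with associated subalgebra $L_{\Sigma}$ (zero derivation), and invoke Proposition~\ref{lem:HNN}; your base case is the empty graph, which is trivial.

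The trade-off is this: the paper's splitting yields a short exact sequence of \emph{complexes} $0\to\mathcal{P}_{\Gamma_0}\otimes U(L)\to(\mathcal{P}_{\Gamma_1}\oplus\mathcal{P}_{\Gamma_2})\otimes U(L)\to\mathcal{P}_\Gamma\to 0$ in which the first two terms are already exact by induction, so $\mathcal{P}_\Gamma$ is exact with no further work --- no long exact sequence, no connecting map to identify. Your approach pays for its simpler base case with the bookkeeping you mention: the degree-shift identification $\mathcal{Q}\cong\mathcal{B}[1]$ and the explicit computation of the connecting homomorphism $\delta(1\otimes\lambda)=1\otimes v\lambda$, which you then match with the map $\alpha$ of Proposition~\ref{lem:HNN}. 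Both arguments are clean illustrations of the Bass--Serre machinery of Section~\ref{B-S-section}, simply choosing the two different building blocks.
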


\begin{proof} We induct on the number of vertices in $\Gamma$.
 We consider first the case when $\Gamma$ is a full graph. Then $L=L_\Gamma$ is abelian and ${\mathcal P}$ is the standard Koszul complex, so is exact. 
%  graphs $\Gamma$ that cannot be decomposed as  $\Gamma = \Gamma_1 \cup \Gamma_2$, where $\Gamma_1 \cap \Gamma_2 = \Gamma_0$ and $\Gamma_0, \Gamma_1, \Gamma_2$ are proper subgraphs %of $\Gamma$. Such a decomposition is impossible only for full graphs $\Gamma$ and for such $\Gamma$ the group $G_{\Gamma}$ and hence $L = \gr(G_{\Gamma}) \otimes_{\mathbb{Z}} K$ is abelian and 
%${\mathcal P}$ is the standard Koszul complex, so is exact.

Assume now that $\Gamma$ is not complete. Then we can find full proper subcomplexes $\Gamma_1,\Gamma_2,\Gamma_0$ such that $\Gamma_1 \cap \Gamma_2 = \Gamma_0$ and  $\Gamma =  \Gamma_1 \cup \Gamma_2$. The presentation of $L_i=L_{\Gamma_i}$, $i=0,1,2$ in terms of generators and relations gives that
$$
L = L_1 *_{L_0} L_2.$$
 Then by induction
${\mathcal P}_{\Gamma_i}$ are all exact complexes for $i = 0, 1, 2$.

By Proposition \ref{amalgam*}  there is an exact sequence of right $U(L)$-modules
$$
0 \to k \otimes_{U(L_0)} U(L) \to (k \otimes_{U(L_1)} U(L)) \oplus (k \otimes_{U(L_2)} U(L)) \to k \to 0
$$
where the first maps sends $k_0 \otimes \lambda$ to   $(k_0 \otimes \lambda) + (k_0 \otimes - \lambda)$ and the second sends $k_1 \otimes \lambda_1 + 
k_2 \otimes \lambda_2 $ to $k_1 \epsilon_1(\lambda_1) + k_2 \epsilon_2(\lambda_2)$, where $\epsilon_i : U(L_i) \to k$ is the augmentation map. This short exact sequence extends to the short exact sequence of complexes
\begin{equation} \label{complex} 
0 \to {\mathcal P}_{\Gamma_0} \otimes_{U(L_0)} U(L) \to
 ({\mathcal P}_{\Gamma_1} \otimes_{U(L_1)} U(L)) \oplus 
({\mathcal P}_{\Gamma_2} \otimes_{U(L_2)} U(L)) \to {\mathcal P}_{\Gamma}  \to 0
\end{equation} 
which is induced by
$c_w \otimes 1$ going to $c_w \otimes 1 + ( c_w \otimes -1)$ and
$c_{w_1} \otimes \lambda_1 + c_{w_2} \otimes \lambda_2 $ goes to $c_{w_1} \lambda_1 - c_{w_2} \lambda_2$ i.e. it is induced by the natural embeddings of ${\mathcal P}_{\Gamma_0}$ in ${\mathcal P}_{\Gamma_i}$  and of ${\mathcal P}_{\Gamma_i}$ in ${\mathcal P}_{\Gamma}$ for $i = 1,2$. 

By induction the complexes ${\mathcal P}_{\Gamma_i}$ are exact for $i = 0,1,2$ hence   the complexes ${\mathcal P}_{\Gamma_i} \otimes_{U(L_i)} U(L)$ are exact for $i = 0,1,2$. Thus (\ref{complex}) is a short exact sequence of complexes, where the first two of the complexes are exact hence the last one, ${\mathcal P}_{\Gamma}$, is exact too.

\end{proof}

\section{Graded Lie algebras}

Here we recall some results on graded Lie algebras and their homological properties.

  We denote $\mathbb{N} = \{ 1,2, \ldots \}$  and $\mathbb{N}_0 = \mathbb{N} \cup \{ 0 \}$. An $\mathbb{N}$-graded Lie algebra  is a Lie algebra  $L$ with a decomposition as vector space $$L = \oplus_{i \in \mathbb{N}}  L_i, \hbox{ where }[L_i, L_j] \subseteq L_{i+j}.$$  $L$ is called  an $\mathbb{N}$-graded Lie algebra of {\it finite} type if each $L_i$ is finite dimensional  over $k$. Note that every finitely generated $\mathbb{N}$-graded Lie algebra $L$ is of finite type. It  is easy to check whether an $\mathbb{N}$-graded Lie algebra $L$ is finitely generated, as it is equivalent to $L/ [L, L] \simeq H_1(L, k)$ is finite dimensional.
  
The $\mathbb{N}$-grading on $L$  induces  a natural $\mathbb{N}_0$-grading on the universal enveloping algebra $R = U(L) $ of $L$ defined by $$ R = \oplus_{i \in \mathbb{N}_0} R_i, \hbox{ where }R_0 = K.1 \hbox{ and }R_i = \sum_{i_1 + \ldots + i_j = i, j \geq 1   }L_{i_1} \ldots L_{i_j}$$ An $\mathbb{N}_0$-graded $R$-module $V = \oplus_{i \in \mathbb{N}_0} V_i$ is defined by the property $V_i R_j \subseteq V_{i+j}$.  A homomorphism of $R$-modules 
$$\varphi : V = \oplus_{i \geq 0} V_i \to W = \oplus_{i \geq 0} W_i$$ between $\mathbb{N}_0$-graded $R$-modules is called graded if $\varphi(V_i) \subseteq W_i$ for every $i$.

We state the version of the Nakayama Lemma for an arbitrary $\mathbb{N}_0$-graded $R$-modules $V$: $V = 0$ if and only if $V \otimes_{R} k = 0$ \cite{Weigel}.
 Furthermore, $V$ is finitely generated as $R$-module if and only if $V \otimes_{R} k$ is finite dimensional over $k$ \cite[Section 2]{Weigel}. 
 
 \begin{lemma} \label{neu}  \cite{K-M2}, \cite{Weigel} Let $L$ be an $\mathbb{N}$-graded Lie algebra. Then $L$ is of type $FP_2$ if and only if $L$ is finitely presented (in terms of generators and relations).
 	\end{lemma}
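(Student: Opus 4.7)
The forward direction (finitely presented $\Rightarrow FP_2$) holds for any Lie algebra, graded or not. Given $L=F/N$ with $F$ free on a finite set $X$ and $N$ the Lie ideal generated by finitely many relators, the $U(L)$-module $N_{\mathrm{ab}}=N/[N,N]$ is finitely generated (by the images of the relators), and the standard exact sequence
$$N_{\mathrm{ab}}\to\bigoplus_{x\in X}U(L)\to U(L)\to k\to 0$$
exhibits $L$ as $FP_2$.

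For the converse, I would exploit the graded Nakayama lemma recalled just above. Suppose $L=\bigoplus_{i\geq 1}L_i$ is $\mathbb{N}$-graded and of type $FP_2$. Since $FP_2\Rightarrow FP_1$, the space $H_1(L,k)=L/[L,L]$ is finite dimensional, and I can choose a finite set $X$ of homogeneous elements of $L$ whose images form a basis of $L/[L,L]$. Let $F$ be the free Lie algebra on $X$, graded so that each $x\in X$ has the degree of its image; the canonical surjection $F\twoheadrightarrow L$ is then graded, and its kernel $N$ is a graded Lie ideal. By the minimal choice of $X$, one has $N\subseteq [F,F]$, so $N$ is concentrated in degrees $\geq 2$.

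The crux is a Hopf-type identification in Lie-algebra homology. Applying the five term exact sequence to the extension $N\hookrightarrow F\twoheadrightarrow L$ and using $H_2(F,k)=0$ because $F$ is free, I would obtain
$$0\to H_2(L,k)\to (N\cap [F,F])/[F,N]\to F_{\mathrm{ab}}\to L_{\mathrm{ab}}\to 0.$$
Since $N\subseteq [F,F]$, the middle term equals $N/[F,N]\cong N_{\mathrm{ab}}\otimes_{U(L)}k$, and (as the right hand map is then an isomorphism) the injection $H_2(L,k)\hookrightarrow N_{\mathrm{ab}}\otimes_{U(L)}k$ is in fact an isomorphism. Because $L$ is $FP_2$, $H_2(L,k)$ is finite dimensional, hence so is $N_{\mathrm{ab}}\otimes_{U(L)}k$. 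As $N_{\mathrm{ab}}$ inherits an $\mathbb{N}_0$-grading from $F$, the graded Nakayama lemma then forces $N_{\mathrm{ab}}$ to be finitely generated as a $U(L)$-module. Lifting a finite $U(L)$-generating set to homogeneous elements $r_1,\ldots,r_m\in N$, a degree-by-degree induction — using $[N,N]_n=\sum_{i+j=n,\,i,j<n}[N_i,N_j]$ because $N$ lives in degrees $\geq 2$ — shows that $r_1,\ldots,r_m$ already generate $N$ as a Lie ideal of $F$, so $L$ is finitely presented.

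The main point to watch is the Hopf-type identification $H_2(L,k)\cong N_{\mathrm{ab}}\otimes_{U(L)}k$, which is precisely what allows the finiteness condition on $H_2(L,k)$ supplied by $FP_2$ to be transferred, via graded Nakayama, into finite generation of the space of relations; everything else is routine.
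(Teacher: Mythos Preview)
Your argument is correct. The paper does not actually give its own proof of this lemma; it is stated with citations to \cite{K-M2} and \cite{Weigel}. That said, your approach is exactly the one the paper deploys in the surrounding results (the two lemmas immediately following): the Hopf-type identification $H_2(L,k)\cong N/[F,N]\cong N_{\mathrm{ab}}\otimes_{U(L)}k$ when the generating set is minimal, followed by the graded Nakayama lemma, is precisely the machinery used there. Your final degree-by-degree induction showing that $r_1,\dots,r_m$ generate $N$ as an ideal is equivalent to the paper's observation (in the proof of Lemma~\ref{presentation}) that for the $\mathbb{N}$-graded Lie algebra $S=N/I$ one has $S=[S,S]$ and hence $S=0$.
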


Let $L$ be an $\mathbb{N}$-graded Lie algebra. We say that $L$ is {\it graded} $FP_m$ if there is a graded projective resolution ( i.e. of graded $U(L)$-modules with graded homomorphisms) of the trivial $U(L)$-module  $k$, such that the projective modules in dimension smaller or equal to $m$ are all finitely generated.

Let $L = \oplus_{i \geq 1} L_i$ be an $\mathbb{N}$-graded Lie algebra. An ideal $N$ of $L$ is a {\it graded} ideal if $N = \oplus_{i \geq 1} N \cap L_i$.

The following result shows that homologically graded Lie algebras behave as pro-$p$ groups i.e. it is a Lie graded algebra version of the pro-$p$-groups result \cite[Thm.~A]{King}. 

\begin{proposition}  \label{homgrad2} \cite{K-M2} Let $L$ be an $\mathbb{N}$-graded Lie algebra over a field $k$ and $N$ be a graded ideal such that $U(L/ N)$ is left and right Noetherian. Then
	$L$ is graded $FP_m$ if and only if $\Ho_i(N, k)$ is finitely generated as $U(L/ N)$-module for every $i \leq m$.
This implies that both graded $FP_m$ and ordinary $FP_m$ are the same property.	\end{proposition}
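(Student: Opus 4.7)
The plan is to apply the Hochschild--Serre spectral sequence in the graded category to the short exact sequence of $\N$-graded Lie algebras $0\to N\to L\to L/N\to 0$:
\[
E^2_{p,q}=H_p(L/N,H_q(N,k))\Longrightarrow H_{p+q}(L,k),
\]
where $H_q(N,k)$ is a graded $U(L/N)$-module via the adjoint action of $L$ on $N$, and every page and differential is homogeneous.

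The only input used from the Noetherian hypothesis is the following preliminary: since $U(L/N)$ is Noetherian, every finitely generated graded $U(L/N)$-module $M$ admits a resolution by finitely generated graded free modules, and applying $k\otimes_{U(L/N)}-$ produces a complex of finite-dimensional $k$-vector spaces. Hence $H_p(L/N,M)=\Tor_p^{U(L/N)}(k,M)$ is finite-dimensional for every $p$. Combined with the standard fact that over the connected graded algebra $U(L)$ with $U(L)_0=k$ one can build a minimal graded free resolution whose $i$-th term has rank $\dim_k H_i(L,k)$, this yields that $L$ is graded $FP_m$ if and only if $\dim_k H_i(L,k)<\infty$ for $i\le m$; the same minimal resolution is also an ordinary free resolution, so graded $FP_m$ coincides with ordinary $FP_m$, settling the last sentence of the proposition.

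Granted these preliminaries, the backward implication is immediate: if $H_q(N,k)$ is finitely generated over $U(L/N)$ for every $q\le m$, then $E^2_{p,q}$ is finite-dimensional whenever $p+q\le m$, and convergence of the spectral sequence forces $H_i(L,k)$ to be finite-dimensional for $i\le m$.

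For the forward implication I would proceed by induction on $q\in\{0,\ldots,m\}$, the case $q=0$ being trivial since $H_0(N,k)=k$. For the inductive step, the inductive hypothesis together with the preliminary give that $E^2_{r,q-r+1}$ is finite-dimensional for every $2\le r\le q+1$ (the constraint $r\ge 2$ forces $q-r+1\le q-1$, placing it within the inductive range; for $r>q+1$ the relevant bidegree has negative second index and vanishes). Hence every subquotient $E^r_{r,q-r+1}$ is finite-dimensional. Since no differential leaves $E^r_{0,q}$, the successive surjections $E^2_{0,q}\twoheadrightarrow E^3_{0,q}\twoheadrightarrow\dots\twoheadrightarrow E^\infty_{0,q}$ have kernels equal to the finite-dimensional images of $d^r\colon E^r_{r,q-r+1}\to E^r_{0,q}$, so the composite surjection has finite-dimensional kernel. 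On the other hand $E^\infty_{0,q}$ embeds as a submodule of $H_q(L,k)$, which is finite-dimensional by hypothesis. Therefore $E^2_{0,q}=H_q(N,k)\otimes_{U(L/N)}k$ is finite-dimensional, and the graded Nakayama lemma recalled above forces $H_q(N,k)$ to be finitely generated over $U(L/N)$, closing the induction.

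The main potential obstacle is the spectral-sequence bookkeeping in the forward step: one has to pin down the bidegrees of the incoming differentials at $E^r_{0,q}$ and verify they all fall within the inductive range, which is exactly what the bound $2\le r\le q+1$ accomplishes. Everything else reduces either to the Noetherian preliminary or to the graded Nakayama statement already recorded in the paper.
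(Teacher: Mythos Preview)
The paper does not supply its own proof of this proposition; it is quoted from \cite{K-M2}. Your argument via the Hochschild--Serre spectral sequence is correct and is the expected approach (it is the Lie-algebra analogue of King's pro-$p$ result \cite{King}, which the paper explicitly invokes as the model for this statement). In particular, your edge bookkeeping at $(0,q)$ is right: outgoing differentials vanish, the incoming $d^r$ lands from bidegree $(r,q-r+1)$ with $q-r+1\le q-1$, so the inductive hypothesis together with the Noetherian preliminary makes each $E^r_{r,q-r+1}$ finite-dimensional, and graded Nakayama then closes the step. The equivalence of graded and ordinary $FP_m$ via minimal graded free resolutions over the connected graded algebra $U(L)$ is likewise standard and correctly argued.
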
 

Applying the above result for $N = L$ we obtain that $L$ is $FP_m$ if and only if $\Ho_i(L,k)$ is finite dimensional for every $i \leq m$.

For completeness we include two more results, that probably are well known. In both cases the results show how similar $\mathbb{N}$-graded Lie algebras are to pro-$p$ groups, since  the same type of results hold for pro-$p$ groups with $k$ substituted  with $\mathbb{F}_p$.

\begin{lemma} Let $L$ be a $\mathbb{N}$-graded Lie algebra. Then $L$ is a free Lie algebra if and only if $\Ho_2(L,k) = 0$.
\end{lemma}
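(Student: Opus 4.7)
The plan is to handle the two directions separately, the forward one being routine and the converse using a minimal presentation together with the five term exact sequence in Lie algebra homology.

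For the forward direction, if $L$ is free then the Chevalley--Eilenberg type resolution (or equivalently the standard short exact sequence $0\to k\otimes_k L \otimes_k U(L)\to U(L)\to k\to 0$ coming from the free basis) is already a projective resolution of $k$ of length $1$, hence $H_i(L,k)=0$ for all $i\ge 2$.

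For the converse, assume $H_2(L,k)=0$. Using the $\mathbb{N}$-grading, I choose a homogeneous basis of $L/[L,L]=H_1(L,k)$, lift it to a homogeneous generating set $X\subseteq L$, and consider the induced surjection $\pi\colon F\twoheadrightarrow L$ where $F$ is the free Lie algebra on $X$ equipped with the $\mathbb{N}$-grading that makes $\pi$ graded. Let $N=\ker\pi$, which is a graded ideal of $F$. The five term exact sequence in homology attached to $0\to N\to F\to L\to 0$ reads
$$H_2(F,k)\to H_2(L,k)\to k\otimes_{U(F)}N^{ab}\to H_1(F,k)\to H_1(L,k)\to 0.$$
Since $F$ is free, $H_2(F,k)=0$; since $X$ was chosen to map to a basis of $L/[L,L]$, the map $H_1(F,k)\to H_1(L,k)$ is an isomorphism; and $H_2(L,k)=0$ by assumption. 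Moreover $k\otimes_{U(F)}N^{ab}=N/[F,N]$ because $[N,N]\subseteq [F,N]$. The exact sequence therefore yields $N=[F,N]$.

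Now I use the grading to deduce $N=0$. Since $F=\bigoplus_{i\ge 1}F_i$ and $N$ is graded, for each $n\ge 1$ we have
$$N_n=[F,N]_n=\sum_{i=1}^{n-1}[F_i,N_{n-i}],$$
where the sum is only over $i\ge 1$ (and $n-i\ge 1$), so $N_n$ is determined by components of $N$ in strictly lower degree. If $N\ne 0$, pick the smallest $n$ with $N_n\ne 0$; then the right hand side vanishes because every $N_{n-i}$ with $1\le i\le n-1$ is zero, giving $N_n=0$, a contradiction. Thus $N=0$ and $L\cong F$ is free. The only subtle point, which is handled by working in the graded category, is the passage from the equation $N=[F,N]$ to $N=0$; in an ungraded setting this would require a Nakayama-type argument that need not be available, but the $\mathbb{N}$-grading makes it immediate.
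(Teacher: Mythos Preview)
Your proof is correct and follows the same overall strategy as the paper: choose a minimal homogeneous generating set, let $F$ be the corresponding free $\mathbb{N}$-graded Lie algebra, set $N=\ker(F\twoheadrightarrow L)$, and use the five-term sequence (equivalently, the Hopf formula) together with minimality to obtain $N=[F,N]$.

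The difference lies in the last step. You conclude $N=0$ directly from $N=[F,N]$ by a degree induction on the $\mathbb{N}$-grading. The paper instead rewrites $N/[F,N]=0$ as $(N/[N,N])\otimes_{U(L)}k=0$, applies the graded Nakayama lemma to get $N/[N,N]=0$, and then invokes Shirshov's theorem that subalgebras of free Lie algebras are free to deduce $N=0$ from $N=[N,N]$. Your route is more elementary: it bypasses both Nakayama and Shirshov, and the appeal to Shirshov in the paper is in fact superfluous, since any $\mathbb{N}$-graded Lie algebra satisfying $N=[N,N]$ (or already $N=[F,N]$) vanishes by exactly the degree argument you give.
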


\begin{proof} 
Let $X$ be a minimal set of generators of $L$. As $L$ is $\mathbb{N}$-graded we have that $L = [L,L] + k X$, where $kX$ is the $k$-vector space spanned by $X$. Since $L$ is $\mathbb{N}$-graded we can choose $X$ to be a graded subset of $L$ i.e. $X $ is a disjoint union $\cup_{ i \geq 1} X_i$ where $X_i \subseteq L_i$.

Let $F(X)$ be the free  Lie algebra on $X$ and $F(X) / R \simeq L$. Note that we have a natural $\mathbb{N}$-grading  of $F(X)$, where the elements of $X_i$ have degree $i$, thus $F(X) / R \simeq L$ is an isomorphism of $\mathbb{N}$-graded Lie algebras.
Since $L$ is $\mathbb{N}$-graded and $X$ is minimal, we have that $dim_k L/ [L,L] = | X | = dim_k F(X)/ [F(X), F(X)]$, hence $R \subseteq [F,F]$.  By Hopf formula $$0 = H_2(L,k) \simeq (R \cap[F,F])/ [R, F] = R/ [R, F] \simeq R/ [R, R] \otimes_{U(L)} k,$$
where we view $R/ [R, R]$ as a right $U(L)$-module via the adjoint action of $F(X)$ that factors through $F(X)/ R \simeq L$.
By the Nakayama lemma for the graded $U(L)$-module $R/[R,R]$ we get that $R/ [R, R] = 0$. Since every subalgebra of a free Lie algebra is free (\cite[Theorem 2.8.3]{BokutKukin}) we conclude that $R$ is free, hence $dim_k (R/ [R,R])$ is the minimal number of generators of $R$. Thus $R = 0$ and $L \simeq F(X)$ is free.
\end{proof}

Let $L$ be an $\mathbb{N}$-graded Lie algebra. An $\mathbb{N}$-graded presentation $\langle X ~| ~R_0 \rangle$ is a presentation, where $X \subseteq L$ is an $\mathbb{N}$-graded  subset of $L$ and $R_0$ is $\mathbb{N}$-graded  subset of the $\mathbb{N}$-graded  free Lie algebra $F(X)$ with a free basis $X$. Note that if $L$ is an $\mathbb{N}$-graded Lie algebra with an $\mathbb{N}$-graded generating set $X$ then $L$ has an $\mathbb{N}$-graded presentation $\langle X ~| ~R_0 \rangle$.

\begin{lemma} \label{presentation} Let $L$ be a $\mathbb{N}$-graded Lie algebra with $\mathbb{N}$-graded presentation $\langle X ~| ~R_0 \rangle$, with $X$ minimal and $R_0$ minimal possible once $X$ is fixed. Then
$$|X| =  dim_k H_1(L, k), |R_0| = dim_k H_2(L,k)$$
\end{lemma}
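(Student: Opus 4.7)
The plan is to deduce both equalities from a graded Nakayama argument, using the Hopf-type description of $H_2$ from the proof of the previous lemma.

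Let $F = F(X)$ be the $\mathbb{N}$-graded free Lie algebra on $X$ and $R$ the graded ideal generated by $R_0$, so $L \cong F/R$ as graded Lie algebras. First I would verify that $R \subseteq [F,F]$. Indeed, in the quotient $F/[F,F] = kX$, the image of $R$ is a graded subspace; if it were nonzero, one could replace $X$ by a smaller graded generating set (by Nakayama applied to the graded $L$-module $L/[L,L]$), contradicting the minimality of $X$. Hence the images of $X$ in $L/[L,L]$ form a $k$-basis, giving
\[
|X| = \dim_k L/[L,L] = \dim_k H_1(L,k).
\]

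For the second equality I would apply Hopf's formula. Since $R \subseteq [F,F]$,
\[
H_2(L,k) \;\cong\; \frac{R \cap [F,F]}{[R,F]} \;=\; \frac{R}{[R,F]} \;\cong\; R_{\mathrm{ab}} \otimes_{U(L)} k,
\]
where $L = F/R$ acts on $R_{\mathrm{ab}} = R/[R,R]$ via the adjoint action of $F$ (which factors through $L$ since $[R,R] \subseteq [R,F]$ already trivialises the $R$-action). Now $R$ is a subalgebra of the free Lie algebra $F$, hence itself free by \cite[Theorem 2.8.3]{BokutKukin}. The grading on $F$ restricts to an $\mathbb{N}$-grading on $R$, and the images of $R_0$ in $R_{\mathrm{ab}} \otimes_{U(L)} k$ form a graded generating set (because $R_0$ generates $R$ as an ideal of $F$, hence generates $R$ as a Lie algebra modulo $[R,F]$, and by freeness of $R$ the minimal number of generators of $R$ equals $\dim_k R_{\mathrm{ab}}/(R_{\mathrm{ab}} \cdot U(L)^{+})$).

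Finally, minimality of $R_0$ (once $X$ is fixed) translates, via graded Nakayama for the graded $U(L)$-module $R/[R,F]$, into the statement that $R_0$ maps bijectively onto a $k$-basis of $R/[R,F]$: any linear dependence modulo $[R,F]$ would let us remove an element of $R_0$ while preserving the presentation. Therefore
\[
|R_0| = \dim_k R/[R,F] = \dim_k H_2(L,k).
\]
The main obstacle is the careful use of minimality: one has to check that ``minimal possible once $X$ is fixed'' really produces a $k$-basis of $R/[R,F]$ rather than merely a generating set, which is where graded Nakayama is essential.
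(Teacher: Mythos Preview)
Your approach is the same as the paper's---Hopf's formula to identify $H_2(L,k)$ with $R/[R,F]$, then use minimality of $R_0$ to conclude it maps to a basis---but the final step is not quite complete as written. You invoke ``graded Nakayama for the graded $U(L)$-module $R/[R,F]$'', yet $R/[R,F]$ carries the \emph{trivial} $U(L)$-action, so Nakayama there says nothing. The correct module is $R/[R,R]$: if some proper subset $R_1\subsetneq R_0$ still spans $R/[R,F]=(R/[R,R])\otimes_{U(L)}k$, graded Nakayama gives that $R_1$ generates $R/[R,R]$ as a $U(L)$-module, hence the ideal $I$ of $F$ generated by $R_1$ satisfies $R=I+[R,R]$. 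You still need to deduce $R=I$; the paper does this by noting that $S:=R/I$ is then an $\mathbb{N}$-graded Lie algebra with $S=[S,S]$, which forces $S=0$. Your sentence ``any linear dependence modulo $[R,F]$ would let us remove an element of $R_0$ while preserving the presentation'' is exactly this implication, but it does not follow from Nakayama alone.

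A second, minor point: the parenthetical ``by freeness of $R$ the minimal number of generators of $R$ equals $\dim_k R_{\mathrm{ab}}/(R_{\mathrm{ab}}\cdot U(L)^+)$'' is not right---freeness of $R$ gives that its minimal number of \emph{Lie-algebra} generators is $\dim_k R_{\mathrm{ab}}$, not the further quotient. In fact freeness of $R$ is irrelevant here (it was needed in the preceding lemma, not this one), and you can simply delete the remark.
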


\begin{proof} The first follows immediately from the fact that by the minimality of $X$ the image of $X$ in $H_1(L,k) \simeq L/ [L,L]$ is a basis as a $k$-vector space. Set $R$ the  kernel of the epimorphism $F(X) \to G$ that is the identity on $X$, i.e. we have $L \simeq F(X)/ R$ and 
$$H_2(L,k) \simeq (R \cap[F,F])/ [R, F] = R/ [R, F] \simeq R/ [R, R] \otimes_{U(L)} k.$$
Note that $R_0$ generates $R$ as an ideal of $F(X)$, hence the image of $R_0$ is a generating set of $R/ [R,R]$ as $U(L)$-module, where we view $R/ [R,R]$ as $U(L)$ via the adjoint action. Then the image of $R_0$ is a generating set of $R/ [R,R] \otimes_{U(L)} k$ as a $k$-vector space. Let $R_1$ be a subset of $R_0$ that is a basis of $R/ [R,R] \otimes_{U(L)} k$ as a vector space and let $I$ be the ideal of $F(X)$ generated by $R_1$. Then $R = I + [R,R]$ and both $I$ and $R$ are $\mathbb{N}$-graded ideals ( hence graded Lie subalgebras of $F(X)$). Hence for the $\mathbb{N}$-graded Lie algebra $S = R/ I$ we have $S = [S, S]$, hence $S = 0$, i.e. $R = I$.
\end{proof}

\section{Graded one relator Lie algebras as iterated HNN extensions}\label{iterated}

A 1-relator Lie algebra is a Lie algebra admitting a presentation (in terms of generators and relations)  of the form
 $$L=\langle X\mid r\rangle$$
where $X$ is an arbitrary set and $r$ is an element of the free Lie algebra $F(X)$ on $X$. In other words, $L$ is the quotient of $F(X)$ with the ideal generated by $r$.   Assume that there is some weight function $\omega:X\to\Z^+$. This induces an $\mathbb{N}$-grading on $F(X)$. 
If $r$ is homogeneous when seen as element of $F(X)$ then the ideal that it generates is also homogeneous so the quotient algebra $L$ is $\mathbb{N}$-graded.

\iffalse{Recall that a Lie algebra $L$ is $\N$-graded if $L = \oplus_{i \geq 1} L_i$ for $L_i$ such that $[L_i, L_j] \subseteq L_{i+j}$ for $i,j \geq 1$. Here and all along the paper $\N$ denotes the set of {\it positive} natural numbers.}\fi

In this section we use the notation $A *_h$ for a Lie algebra HNN extension with a base subalgebra $A$ and stable letter $t$. This notation does not specify the associated Lie subalgebra. We prove the following result which is Theorem B from the introduction.

\begin{theorem}\label{onerelator} Let 
$L=\langle X\mid r\rangle$
be a one relator  $\mathbb{N}$-graded  Lie algebra with $X$ finite, where  the $\mathbb{N}$-grading is induced by  some weight function $\omega:X\to\Z^+$, i.e.  $r\in F(X)$ is homogeneous with respect to $\omega$.  Then $L$ is an iterated HNN-extension  of Lie algebras
$$L=(\ldots(A*_{h_n})*_{h_{n-1}})\ldots)*_{h_1}$$
such that $A$ and all the associated  Lie subalgebras in each of the HNN-extensions are free.
 \end{theorem}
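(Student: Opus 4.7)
The plan is to imitate the Magnus--Moldavanskii hierarchy for one-relator groups, adapted to the $\mathbb{N}$-graded Lie algebra setting using the HNN machinery of Subsection~\ref{sec:HNN}. I would induct on a complexity measure of $r$; a convenient choice is the pair $(d,n)$ ordered lexicographically, where $d$ is the $\omega$-degree of $r$ and $n$ is the number of distinct generators from $X$ appearing in $r$. At each inductive step the goal is to exhibit
\[
L \;=\; \langle L_0,\, t \mid [t,a]=\delta(a),\ a\in A\rangle
\]
as an HNN extension of a strictly simpler one-relator $\mathbb{N}$-graded Lie algebra $L_0$ (to which the inductive hypothesis applies) along a free associated subalgebra $A\leq L_0$.

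The base case covers $r=0$ and, more generally, the case where a graded linear change of basis in the $\omega$-homogeneous part of $F(X)$ containing $r$ lets us solve $r$ for a single generator $y\in X$; in both cases $L$ is free, so we may take $A=L$ with no HNN extensions. For the inductive step, pick $x\in X$ actually appearing in $r$ and consider the $x$-degree decomposition of the free Lie algebra
\[
F(X)\;=\;F(X')\ltimes M,\qquad X'=X\setminus\{x\},
\]
where $M$ is the ideal generated by $x$. By Shirshov's freeness theorem $M$ is itself a free Lie algebra, with a natural free generating set consisting of iterated brackets $z_{\underline{i}} = [\cdots[[x,y_{i_1}],y_{i_2}]\cdots,y_{i_k}]$ with $y_{i_j}\in X'$. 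After introducing formal symbols for these generators and rewriting $r$ as a Lie polynomial $r_0$ in the enlarged alphabet $X'\cup\{z_{\underline{i}}\}$, in which $x$ no longer appears explicitly (only finitely many $z_{\underline{i}}$ are involved, by finiteness of $X$ and homogeneity of $r$), define $L_0:=\langle X'\cup\{z_{\underline{i}}\}\mid r_0\rangle$. One then checks that $L$ arises from $L_0$ as an HNN extension with stable letter $x$, derivation $\delta$ encoding the shift $z_{\underline{i}}\mapsto z_{\underline{i}\,j}$ and the action of $\mathrm{ad}\,x$ on $X'$, and associated subalgebra $A$ generated by the $z_{\underline{i}}$ that $\delta$ acts upon; faithful embedding of $L_0$ into the resulting HNN extension is automatic from Wasserman's result cited in Subsection~\ref{sec:HNN}.

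The main obstacle is the bookkeeping in the Tietze-style rewriting: one must verify that $r_0$ is genuinely simpler than $r$ in the $(d,n)$ ordering so that the induction runs, and that the associated subalgebra $A$ is free. Freeness of $A$ is extracted from Shirshov's freeness theorem together with Proposition~\ref{free0}, after identifying $A$ with (the image of) a Lie subalgebra of the free Lie algebra $M$. The $\mathbb{N}$-graded hypothesis is essential at both points: it guarantees that the rewriting preserves $\omega$-homogeneity, that the support of $r$ in a Hall basis is finite and controlled, and that the complexity strictly decreases at each step. In the non-$\mathbb{N}$-graded case the analogous procedure can fail to terminate, consistent with Shirshov's counterexample to a Kurosh-type theorem mentioned in the introduction.
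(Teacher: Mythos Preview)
Your overall strategy---iterated HNN extensions produced by a Lazard-type elimination, with freeness of the associated subalgebras supplied by the Freiheitssatz---is exactly the paper's approach, but two of your key steps are not correctly set up.

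First, the elimination is running in the wrong direction. The elements $z_{\underline{i}}=[\cdots[[x,y_{i_1}],y_{i_2}]\cdots,y_{i_k}]$ (one $x$, many $y$'s) freely generate the ideal $M$ \emph{generated by $x$}; the quotient $F(X)/M$ is $F(X')$, not $kx$, so this decomposition cannot exhibit $x$ as a single stable letter. Indeed the ``shift'' you write, $z_{\underline{i}}\mapsto z_{\underline{i}\,j}$, is $\ad(y_j)$, not $\ad(x)$, and is not even a single map; moreover $z_\emptyset=x$ already lies in your $L_0$. For $x$ to be the stable letter one needs the \emph{dual} elimination: the ideal generated by $X'$ is free on $\{(\ad x)^n(y):y\in X',\ n\geq 0\}$ (one $y$, many $x$'s), the relator $r$ lies there because every Lie monomial of length $\geq 2$ involves some $y\in X'$, and the HNN derivation is the honest shift $(\ad x)^n(y)\mapsto(\ad x)^{n+1}(y)$. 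This is precisely what the paper writes, with $h_i$ in place of $x$ and $Y_i,Z_i$ the finite truncations of this generating family.

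Second, your induction on $(d,n)$ does not terminate: the $\omega$-degree $d$ of the relator is preserved by the rewriting (the new generators carry the induced weights), while the number $n$ of generators occurring in the rewritten relator typically \emph{increases}, since each surviving $y\in X'$ spawns an entire family $\{(\ad x)^j(y)\}_j$. The paper does not induct on a complexity of $r$ at all. Instead it fixes a weighted Hall basis $H=\{h_1<h_2<\cdots\}$ of $F(X)$ and eliminates $h_1,h_2,\ldots$ in order, obtaining a descending chain $F(X)=F(X_0)\supset F(X_1)\supset\cdots$ with $\bigcap_i F(X_i)=0$. Since $r\neq 0$ there is a first index $t$ at which the process halts; Labute's argument shows that at this stage the leading Hall monomial of $r$ lies in $X_t$ and hence $B_t=\langle Y_t\mid r\rangle$ is free (it embeds in the free algebra $F(X_t)/I$). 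Freeness of each associated subalgebra $A_i$ then follows from the Freiheitssatz by choosing the truncation $j(i)$ minimally so that $r\notin F(Z_i)$. Thus termination is an exhaustion argument on the Hall filtration, not a decreasing-complexity induction.
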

 
As a consequence one can prove that for $L$ as in the hypothesis $\cd(L) \leq 2$ (but this is already proven for arbitrary one-relator Lie algebras in \cite[3.10.7]{BokutKukin}).
 The proof of Theorem \ref{onerelator} uses the Freihitssatz for one-relator Lie algebras \cite{BokutKukin}
and also an inductive argument based on the proof of  the following result by Labute  \cite{Labute95}:
 
 \begin{theorem}(Labute, \cite[ Proof of Theorem 1, page 182]{Labute95})\label{Labute} Let 
$$L=\langle X\mid r\rangle$$
be a 1-relator Lie algebra with $X$ finite. Assume that there is some weight function $\omega:X\to\Z^+$ such that $r$ is homogeneous with respect to the grading induced by $\omega$ on $F(X)$. Then there is  some ideal $\mathcal{H}$ of $L$ of finite codimension such that $\mathcal{H}$ is free as Lie algebra.
\end{theorem}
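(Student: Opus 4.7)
Since $L$ is a finitely generated $\mathbb{N}$-graded Lie algebra, each graded piece $L_i$ is finite-dimensional over $k$. For each $N\geq 1$, the subspace $\mathcal{H}_N:=\bigoplus_{i>N}L_i$ is a graded ideal of $L$ (because $[L_i,L_j]\subseteq L_{i+j}$), and the quotient $Q_N:=L/\mathcal{H}_N=\bigoplus_{i=1}^N L_i$ is a finite-dimensional nilpotent Lie algebra. Hence every $\mathcal{H}_N$ has finite codimension in $L$, and the entire task reduces to showing that $\mathcal{H}_N$ is \emph{free} for $N$ sufficiently large.

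To check freeness I will invoke the cohomological criterion from Section~3: an $\mathbb{N}$-graded Lie algebra is free if and only if its $H_2(-,k)$ vanishes. Since $\mathcal{H}_N$ inherits a natural (shifted) $\mathbb{N}$-grading from $L$, it suffices to exhibit some $N$ with $H_2(\mathcal{H}_N,k)=0$. For this I would apply the Lyndon--Hochschild--Serre spectral sequence attached to the extension $0\to\mathcal{H}_N\to L\to Q_N\to 0$, namely
\[
E^2_{p,q}=H_p(Q_N,H_q(\mathcal{H}_N,k))\;\Longrightarrow\;H_{p+q}(L,k).
\]
Since $L$ is $1$-relator, Lemma~\ref{presentation} gives $\dim_k H_1(L,k)=|X|$ and $\dim_k H_2(L,k)=1$, the unique nontrivial class being represented by $r$. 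For $N\geq\omega(r)$ the relator already lives in $Q_N$, so the edge map $H_2(Q_N,k)\to H_2(L,k)$ hits this class.

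The key claim is that, for such $N$, the coinvariants $E^2_{0,2}=H_2(\mathcal{H}_N,k)_{Q_N}$ vanish. Granting this, the graded Nakayama lemma from Section~3 (applied to the $U(Q_N)$-module $H_2(\mathcal{H}_N,k)$) promotes vanishing of coinvariants to $H_2(\mathcal{H}_N,k)=0$, so $\mathcal{H}_N$ is free by the criterion above. To establish the vanishing of $E^2_{0,2}$, one compares Hilbert series: the $1$-relator hypothesis forces $U(L)$ to have Hilbert series $(1-\sum_{x\in X}t^{\omega(x)}+t^{\omega(r)})^{-1}$, and, matching this weight-by-weight against the Hilbert series of $H_*(Q_N,k)$ and the Euler characteristic of the spectral sequence, one checks that the only way for the abutment to balance is $E^2_{0,2}=0$ in all weights once $N\geq\omega(r)$.

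The hardest part will be controlling the higher differentials in the spectral sequence, which could in principle allow $H_2(\mathcal{H}_N,k)$ to be nonzero while the dimension bookkeeping still matches. Two ingredients should close this gap. First, the $1$-relator hypothesis yields $\cd(L)\leq 2$ by \cite[Cor.~3.10.7]{BokutKukin}, so the abutment is concentrated in low degrees and the possible differentials are severely constrained. Second, the iterated HNN decomposition from Theorem~B exhibits $L$ as the fundamental Lie algebra of a graph of \emph{free} Lie algebras with free edge algebras, so the Mayer--Vietoris sequence of Corollary~\ref{exact2} gives an independent and explicit computation of $H_*(L,k)$ from free pieces. Matching this computation against the spectral sequence pins down all the edge maps and, for $N$ beyond the largest weight arising in the HNN tower, forces the required $E^2_{0,2}=0$.
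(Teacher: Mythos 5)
Your reduction is to show that the tail ideal $\mathcal{H}_N=\oplus_{i>N}L_i$ satisfies $H_2(\mathcal{H}_N,k)=0$ for large $N$, and the Nakayama step (coinvariants $=0$ implies the graded module is $0$) is fine; the target statement is even true, since once \emph{some} free ideal of finite codimension exists, a deep enough tail sits inside it and is free by Shirshov--Witt. The genuine gap is exactly at the step you flag as hard: nothing in your outline actually proves $E^2_{0,2}=H_2(\mathcal{H}_N,k)_{Q_N}=0$. A Hilbert-series/Euler-characteristic comparison only constrains alternating sums of dimensions, and in the homology LHS spectral sequence the term $E^2_{0,2}$ can be killed by the differentials $d_2\colon E^2_{2,1}=H_2(Q_N,H_1(\mathcal{H}_N,k))\to E^2_{0,2}$ and $d_3\colon E^3_{3,0}\to E^3_{0,2}$; since $Q_N$ is a finite-dimensional nilpotent Lie algebra, $H_2(Q_N,-)$ and $H_3(Q_N,k)$ are typically nonzero in the relevant weights, so the bookkeeping by itself cannot distinguish ``$E^2_{0,2}=0$'' from ``$E^2_{0,2}$ nonzero but cancelled by differentials''. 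The bound $\cd(L)\le 2$ constrains the abutment, not the $E_2$-page, so it does not remove this ambiguity. Your proposed patch is also not available: in this paper Theorem B is \emph{deduced from} Theorem \ref{Labute} (its proof uses the filtration $F(X)\supseteq F(X_1)\supseteq\cdots$ and the free ideal $\mathcal{H}=F(X_t)/I$ constructed there), so invoking the iterated HNN decomposition and the resulting Mayer--Vietoris sequences to force $E^2_{0,2}=0$ is circular, and in any case ``pins down all the edge maps'' is an assertion, not an argument. (Two smaller inaccuracies: the homology edge map goes $H_2(L,k)\to H_2(Q_N,k)$, not the other way; and the Hilbert-series formula for $U(L)$ presupposes that the relation module is free cyclic, which you use without justification.)

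For comparison, the paper's proof (following Labute) is combinatorial rather than homological: one fixes a weighted Hall set $H$ of $F(X)$, forms the descending chain of free subalgebras $F(X_i)$ of finite codimension obtained by deleting $h_1,\dots,h_i$, and shows that the ideal $I=(r)$ eventually lies in some $F(X_t)$ in such a way that an explicit family of iterated brackets of $r$ generates $I$ and extends to a free basis of $F(X_t)$; hence $\mathcal{H}=F(X_t)/I$ is free and of finite codimension in $L$. If you want a homological proof along your lines, you would need an independent mechanism to control $H_2(Q_N,H_1(\mathcal{H}_N,k))$ and the differentials out of it, which is precisely what the Hall-set construction supplies implicitly.
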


Note first that we may assume $|X|\geq 1$. The weight function $\omega$ can be extended to $F(X)$ via $\omega([u,v])=\omega(u)+\omega(v)$.
Fix an order $<$ in $X$ which is compatible with $\omega$ (i.e., such that $x<y$ implies $\omega(x)\leq\omega(y)$). 
Following  \cite{Labute95} a {\emph weighted Hall set} with respect to  $X$ and $\omega$ is a basis $H$ of $F(X)$ consisting of Lie monomials together with a well-ordering $<$ such that 
\begin{itemize}
\item[1)] $X$ is an ordered subset of $H$,

\item[2)] for $u,v\in H$, $\omega(u)<\omega(v)$ implies $u<v$,

\item[3)] for $[a,b]\in F(X)$, $[a,b]\in H$ if and only if the following conditions hold: $a,b\in H$, $a<b$ and if $b=[c,d]$,   $c,d \in H$ then $a\geq c$,

\item[4)] if $[a,b],[c,d]\in H$ are such that $\omega([a,b])=\omega([c,d])$, then $[a,b]<[c,d]$ if and only if either $b<d$ or $b=d$ and $a<c$. 
\end{itemize}
From this it can be deduced that every $v\in H$ admits a \emph{canonical decomposition} which is an expression
$$v=[u_1,\ldots,u_m,z]$$
(commutators are right normed) with $u_1\geq\ldots\geq u_m<z$, $u_1\ldots,u_m\in H$, $z\in X$. 

%\begin{proof}
\noindent{\it Proof of Theorem \ref{Labute} (Sketch)} Consider $H$ with the total order induced by $<$:
$$H=\{h_1<h_2<\ldots\}$$
and define $\Ho_0 = H$, $X_0 = X$ and
$$\Ho_i=\Ho_{i-1}  \setminus \{h_i\}=H \setminus\{h_1,\ldots,h_i\},$$
$$X_i=\{[ \underbrace{h_i,\ldots,h_i}_{n\text{-times} },z]\mid  n \geq 0 , h_i\neq z\in X_{i-1}\}.$$
Then $X_i$ is the set of indecomposable elements of $\Ho_i$ and $h_{i+1}\in X_i$. Also,
$$X_{i-1} \setminus
\{h_i\}\subseteq X_i.$$
Moreover, $\Ho_i$ is a weighted Hall set with respect to $X_i$ and the weight induced by $\omega$. In particular, $\Ho_i$ is a basis for the free Lie algebra $F(X_i)$. We also have
$$F(X)=F(X_0)\supseteq F(X_1)\supseteq\ldots\supseteq F(X_i)\supseteq\ldots$$
and each $F(X_{i+1})$ is an ideal of $F(X_i)$ of codimension 1 such that $F(X_i)/F(X_{i+1})=Kh_{  i+1}$. Finally, $\cap F(X_i)=0$.

As $k$ is a field, we may  assume that  $$r=a+\alpha$$
where $a\in H$ and $\alpha$ is a linear combination of elements $\alpha_j\in H$ with $\alpha_j<a$.
Let $I$ denote the ideal of $F(X)$ generated by $r$.
Labute shows that for any $i$, if $I\subseteq F(X_i)$ then
\begin{itemize}
\item either $I\subseteq F(X_{i+1})$ and $a$ and each $\alpha_j\in \Ho_{i+1}$,
\item or $a\in X_i$, the family $\mathcal{F}$ below generates the ideal $I$ as an ideal of $F(X_i)$ and $\mathcal{F}$ can be extended to a basis of $F(X_i)$. In this case, $F(X_i)/I$ is free.
$$\mathcal{F}=\{[h_i,\buildrel{j_i}\over{\ldots} , h_i,h_{i-1},\buildrel{j_{i-1}}\over{\ldots},h_{i-1},\ldots,h_1,\buildrel{j_1}\over\ldots,h_1,r]\mid j_i,j_{i-1},\ldots,j_1\geq 0\}.$$
\end{itemize}
As $\cap F(X_i)=0$ this process has to stop eventually so there is some $t$ with $I\subseteq F(X_t)$ and $\mathcal{H}:=F(X_t)/I$ is free. Moreover $\mathcal{H}$ is an ideal of $L$ and
$$L/\mathcal{H}=F(X)/F(X_t)$$
has finite dimension.
\qed% \end{proof}
 
 \bigskip

 \begin{remark}\label{epiHNN} If $L=B*_h$ is an  HNN extension of $k$-Lie algebras, there is an epimorphism $\pi:L\to k$ with $\pi(h)=1$. The kernel of $\pi$ is the ideal of $L$ generated by the Lie subalgebra $B$. 
  \end{remark}

 \noindent{\it Proof of Theorem \ref{onerelator}} We use the same notation as in the proof of Theorem \ref{Labute}. We begin by choosing, for any $i\leq t$, a 
  finite subset
 $$Y_i\subseteq X_i$$ 
 such that $r\in F(Y_i)$, $h_i\in Y_{i-1}$ and $Y_{i-1} \setminus \{h_i\}\subseteq Y_i$. To do that,    let $Y_0=X_0=X$  and proceed inductively:  if $r\in F(Y_{i-1})$ we can choose a non-negative integer $j(i)$ such that $r$ lies in the free Lie subalgebra generated by
 $$Y_i=\{[h_i,\buildrel j\over\ldots,h_i,z]\mid z\in Y_{i-1} \setminus    \{h_i\},0\leq j\leq j(i)\}.$$
 Observe that at each step we could be choosing  the integer $j(i)$ to be smallest possible, however we prefer not to do so at this point and allow $j(i)$ to be arbitrarily big. For each $i$ let
  $$Z_i=\{[h_i,\buildrel j\over\ldots,h_i,z]\mid z\in Y_{i-1} \setminus    \{h_i\},0\leq j< j(i)\}.$$

%To do that,  choose $Y_0\subseteq X_0=X$  finite such that   $r \in F(Y_0)$ and proceed inductively:  if $r\in F(Y_{i-1})$ we can choose 
 %for each $z\in Y_{i-1}  \setminus   \{h_i\}$ a non-negative integer $j(z)$ such that $r$ lies in the free Lie subalgebra generated by
 %$$Y_i=\{[h_i,\buildrel j\over\ldots,h_i,z]\mid z\in Y_{i-1} \setminus    \{h_i\},0\leq j\leq j(z)\}.$$
 %Observe that at each step we could be choosing each of the integers $j(z)$ to be smallest possible, however we prefer not to do so at this point and allow $j(z)$ to be arbitrarily big. For each $i$ let
 % $$Z_i=\{[h_i,\buildrel j\over\ldots,h_i,z]\mid z\in Y_{i-1} \setminus    \{h_i\},0\leq j< j(z)\}.$$
  The adjoint action of $h_i$ induces a map
  $$
  \begin{aligned}
  Z_i&\to Y_i\\
  b&\mapsto[h_i,b]\\
    \end{aligned}
  $$
%and this induces a derivation $d:F(Z_i)\to F(Y_i)$. Associated to this derivation we may form the HNN extension
%$$F(Y_i)*_{h_i}$$
%with associated subgroup $F(Z_i)$. But observe that 
%$$F(Y_i)*_{h_i}=F(Y_{i-1}).$$
%Therefore we may see $F(X)$ as an iterated HNN extension
%$$F(X)=(\ldots((F(Y_t)*_{h_t})*_{h_{t-1}})\ldots*_{h_1}$$
 %so that at each step $r\in F(Y_i)$. Let $[F(Y_i),r]$ be the ideal of $F(Y_i)$ generated by $r$.
  Put $B_i=\langle Y_i\mid r\rangle$ and let $A_i$ be the Lie subalgebra of $B_i$ generated by $Z_i$. The map above induces a derivation
  $$
  \begin{aligned}
  d_i:A_i&\to B_i\\
  b&\mapsto[h_i,b]\\
    \end{aligned}
  $$    
and we may form the Lie algebra HNN extension $B_i *_{h_i}$ with associated subalgebra $A_i$. 
We have
$$B_i*_{h_i}=\langle Y_i,h_i\mid r,[h_i,b]=d(b)\text{ for }b\in A_i\rangle=\langle Y_{i-1}\mid r\rangle=B_{i-1}.$$
 Therefore
  $$L=F(X)/I=(\ldots((B_t*_{h_t})*_{h_{t-1}})\ldots*_{h_1}$$
and $B_t=\langle Y_t\mid r\rangle$. Here $t$ is the number from the  scketch of the proof of Theorem \ref{Labute} and $B_t$ is a Lie subalgebra of the free Lie algebra ${\mathcal{H}}$, hence is free itself. 
%The above is OK for $Y_i = X_i$, otherwise $X$ might be infinite and $(\ldots((B_t*_{h_t})*_{h_{t-1}})\ldots*_{h_1}$ does not contain $X$, we have not %used that each $Y_i$ is finite either.

	\iffalse{ By the proof of Theorem \ref{Labute}, we may assume $r=a+\alpha$ with $a\in X_t$ and $\alpha$ a sum of $\alpha_i\in H_t$'s with $\alpha_i<a$. 
By the choice of the sets $Y_i$, $r\in F(Y_t)$ and $a\in Y_t$. This means that $(Y_t \setminus  \{a\})\cup\{r\}$ is also a basis of $F(Y_t)$ thus $B_t$ is free.}\fi

At this point we have shown that $L$ is an iterated HNN-extension of one relator algebras so that the first one is free and we still have to prove that the associated algebras $A_i$ can be chosen to be all free. To do that it suffices to chose all the values  $j(i)$ to be minimal possible. Then we have that $r\not\in F(Z_i)$ for any $i$ which, using the Freiheitssatz for Lie algebras implies that the  Lie algebras $A_i$ are free. 
 \qed

\section{Coherence for Lie algebras and universal enveloping algebras}\label{coherence}

Let $R$ be an associative ring with 1. Recall that an $R$-module $M$ is called  coherent if every finitely generated $R$-submodule of $M$ is finitely presented.
The ring $R$  is called {\it coherent} (meaning {\it right coherent} with our convention) if it has the property that any finitely generated   $R$-submodule of $R$, i.e. right ideal,  is finitely presented. This is equivalent to every finitely presented (right) $R$-module is coherent. Note that a finitely generated coherent $R$-module $M$ is of homological type $FP_{\infty}$, in particular is finitely presented  and that every finitely presented $R$-module over a coherent ring $R$ is a coherent $R$-module, hence is $FP_{\infty}$. For example,  free non-abelian polynomial rings over a field are coherent  \cite[Cor.~3.3]{Swan}.  And abelian polynomial rings over a filed and  on infinitely many variables are coherent but  not noetherian.  And a group $G$ is {\it coherent} if  any finitely generated subgroup $H\leq G$ is also finitely presented. Again, free groups are coherent. By analogy we set

\begin{definition} A Lie algebra is {\it coherent} if any finitely generated subalgebra $S\leq L$ is also finitely presented. If $L$ is $\N$-graded, then we say that it is {\it graded-coherent} if
any finitely generated $\N$-graded subalgebra $S\leq L$ is also finitely presented.  
\end{definition}

Assume that  for a group $G$ the group ring $kG$ is coherent and $H$ is a finitely generated subgroup of $G$. As $H$ is finitely generated, the induced module $U=k\otimes_{kH}kG$ is a finitely presented (right) $k G$-module, hence is  $\FP_{\infty}$ and in particular is $FP_2$. This implies that the group $H$ is $FP_2$ by \cite[Prop.~ 1.4]{Bieribook}, see Lemma 
\ref{indFPn}.  
 But in general this does not imply that $H$ is finitely presented,  so we cannot claim that $G$ is a coherent group. The problem is that, for groups, being of type $\FP_2$ does not imply being finitely presented as the examples constructed by Bestvina and Brady in \cite{B-B} 
show. However, things change if we work with $\N$-graded Lie algebras, as in that case both properties are equivalent (see \cite{Weigel} and also \cite{K-M2} where we extend this fact to higher degrees).

\begin{lemma}\label{graded} Let $L$ be an $\N$-graded Lie algebra. Assume that the universal enveloping algebra $U(L)$ is coherent. Then $L$ is graded-coherent.
\end{lemma}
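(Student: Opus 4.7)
The plan is to take a finitely generated $\N$-graded Lie subalgebra $S\leq L$ and deduce that it is $\FP_2$ (in fact $\FP_\infty$) by transferring the coherence of $U(L)$ to the induced module $k\otimes_{U(S)}U(L)$, after which Lemma \ref{neu} will promote $\FP_2$ to finite presentation. So fix Lie algebra generators $x_1,\dots,x_n$ of $S$ and consider
$$M := k\otimes_{U(S)}U(L) \cong U(L)/J,$$
where $J=I_S\cdot U(L)$ is the right ideal of $U(L)$ generated by the augmentation ideal $I_S$ of $U(S)$ (inside $U(L)$ via the canonical embedding $U(S)\hookrightarrow U(L)$).

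The first real step is to observe that $J$ is in fact generated as a right ideal of $U(L)$ by the single set $\{x_1,\dots,x_n\}$. For this it suffices to show that $I_S=\sum_i x_iU(S)$ as a right ideal of $U(S)$; by PBW the augmentation ideal is spanned by elements of $S$ multiplied (associatively) by elements of $U(S)$, and an easy induction on the length of a Lie bracket, using
$$[x_i,y]=x_iy-yx_i \in x_iU(S)+\sum_j x_jU(S),$$
shows that every element of $S$ lies in $\sum_i x_iU(S)$. Consequently $J=\sum_i x_iU(L)$ is a finitely generated right ideal of $U(L)$.

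The second step is to invoke coherence: since $U(L)$ is coherent, every finitely generated right ideal is finitely presented, hence $J$ is finitely presented, and therefore the short exact sequence
$$0\to J\to U(L)\to M\to 0$$
shows that $M$ is finitely presented as a right $U(L)$-module. Over a coherent ring, finitely presented modules are $\FP_\infty$, so in particular $M$ is $\FP_2$. Applying Lemma \ref{indFPn} to the Lie subalgebra $S\leq L$, the fact that $M=k\otimes_{U(S)}U(L)$ is $\FP_2$ as a $U(L)$-module translates to $S$ being of type $\FP_2$ as a Lie algebra. Since $S$ is $\N$-graded, Lemma \ref{neu} finally yields that $S$ is finitely presented, which is exactly graded-coherence of $L$.

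The main (minor) obstacle I anticipate is the bookkeeping in step one: verifying cleanly that the augmentation ideal $I_S$ really is generated on one side by the Lie-theoretic generators $x_1,\dots,x_n$, so that a finite Lie generating set of $S$ yields a finite right generating set of $J$ in $U(L)$. Once this is in place, coherence of $U(L)$ does all the heavy lifting and the remaining steps are straightforward applications of Lemmas \ref{indFPn} and \ref{neu}.
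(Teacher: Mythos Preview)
Your proof is correct and follows essentially the same route as the paper: form the induced module $M=k\otimes_{U(S)}U(L)$, use coherence of $U(L)$ to upgrade $M$ from $\FP_1$ to $\FP_2$, then apply Lemma~\ref{indFPn} and Lemma~\ref{neu}. The only cosmetic difference is that where the paper simply invokes Lemma~\ref{indFPn} to say $M$ is $\FP_1$ (since $S$ is finitely generated), you unpack this by hand, showing $I_S=\sum_i x_iU(S)$ so that $M\cong U(L)/J$ with $J$ a finitely generated right ideal; this is fine, though your detour through ``$J$ is finitely presented'' is unnecessary, since $J$ finitely generated already gives $M$ finitely presented, and coherence then gives $\FP_\infty$.
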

\begin{proof} Let $S\leq L$ be a finitely generated $\N$-graded subalgebra. Consider the induced $U(L)$-module $M=k\otimes_{U(S)}U(L)$. By Lemma \ref{indFPn}, $M$ is $\FP_1$. As $U(L)$ is coherent $M$ is also of type $\FP_2$ and so is $S$ again by Lemma \ref{indFPn}. As $S$ is $\N$-graded, $S$ is finitely presented (see above).
\end{proof}

For rings there is a construction of free amalgamated products and also a notion of HNN extension. In both cases, if the rings that play the role of the edge groups are assumed to be Noetherian and the rings that play the role of the vertex groups are assumed to be coherent, the resulting ring is coherent (see \cite{A} for free amalgamated products, \cite{Dicks} for HNN extensions). 
%In the case of an HNN extension,  if $L=L_1*_{L_0,t}$, $U(L)$ is {\it not} the ring theoretical amalgamated product $U(L_1)*_{U(L_0),t}$ as defined in \cite{Dicks}. However, we will prove in the next result that %$U(L)$ behaves as a ring HNN extension with respect to coherence.

At this point, we can prove Theorem C of the introduction.

\begin{theorem}\label{coherentfund}
Assume that the Lie algebra $L$ is the fundamental Lie algebra of a graph of Lie algebras such that for the vertex Lie algebras $L_v$ the universal enveloping algebra $U(L_v)$ is coherent and for each edge algebra $L_e$, $U(L_e)$ is Noetherian. Then $U(L)$ is coherent.
\end{theorem}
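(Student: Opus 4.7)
The plan is to mirror the proof of Theorem~\ref{exact1}: induct on the size of the underlying graph so as to reduce to the two building-block constructions, amalgamated products and HNN extensions. Specifically, I would induct on $|V(\Gamma)| + |E(\Gamma)|$, at each step either collapsing a tree edge or peeling off a non-tree edge. This writes $L$ either as $L = L_0 *_{L_{e_0}} L_{v_0}$ or as an HNN extension $L_0 *_h$ with associated subalgebra $L_{e_0}$, where $L_0$ is the fundamental Lie algebra of a strictly smaller graph of Lie algebras satisfying the same hypotheses. The inductive hypothesis then gives that $U(L_0)$ is coherent, and the desired conclusion follows once the two base cases are established.

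For the amalgamated case $L = L_1 *_{L_0} L_2$ with $U(L_0)$ Noetherian and both $U(L_1), U(L_2)$ coherent, I would appeal to the coherence theorem for ring amalgamated products proved in \cite{A}. The universal enveloping algebra functor is a left adjoint and so preserves pushouts, which identifies $U(L_1 *_{L_0} L_2)$ with the ring amalgamated product $U(L_1) *_{U(L_0)} U(L_2)$; alternatively, this identification can be read directly off Proposition~\ref{amalgam*}, which is precisely the Mayer--Vietoris short exact sequence characterising the ring amalgamated product.

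For the HNN extension $L = \langle L_1, t \mid [t,a] = d(a) \text{ for } a \in A\rangle$, $U(L)$ is built from $U(L_1)$ by adjoining a (non-invertible) element $t$ subject to $ta = at + d(a)$, so it is not literally a ring HNN extension in the Cohn/Dicks sense. However, Proposition~\ref{lem:HNN} provides a short exact sequence of $U(L)$-modules having the same structural shape as the one driving Dicks' coherence theorem for ring HNN extensions \cite{Dicks}; compare the remark following Corollary~\ref{ses-homology}, which identifies our sequence as the linearised analogue of the one used in \cite{Dicks}. One would then run the argument of \cite{Dicks} with this sequence as input, using the Noetherian hypothesis on $U(A)$ and the coherence of $U(L_1)$ to conclude coherence of $U(L)$.

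The main obstacle I foresee is making rigorous that Dicks' proof transfers from the conjugation-twisted ring HNN setting to our differential setting. Dicks isolates what amounts to the Mayer--Vietoris data used in his argument and does not need the stable letter to be invertible in any essential way, so the transfer should go through; but if a direct appeal proves delicate, a self-contained alternative is to argue via Corollary~\ref{ses-homology} directly. Namely, given a finitely generated right ideal $I \leq U(L)$, take $V = I$ in the short exact sequence there, invoke the Shapiro-type isomorphisms identifying the flanking terms with induced modules from $U(L_e)$ and $U(L_v)$, and use that induction from a Noetherian edge subalgebra preserves $\FP_\infty$ while induction from a coherent vertex subalgebra preserves $\FP_2$ (via Lemma~\ref{indFPn}), to conclude that $I$ itself is $\FP_2$ and hence finitely presented.
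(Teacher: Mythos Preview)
Your inductive reduction to the two building blocks is sound, and the amalgamated case does follow from \cite{A} once you note that $U(-)$ preserves pushouts. For the HNN case your instinct that Dicks' argument transfers is correct, but the paper shows a cleaner way to see this: rather than inducting and then adapting \cite{A} and \cite{Dicks} separately, it runs the \emph{underlying} argument of both papers directly for the full graph. The key input is Chase's criterion (a ring $R$ is right coherent iff $\prod_I R$ is left flat for $|I|=\mathrm{card}\,R$), and the key computation is to show $\Tor_s^{U(L)}(V,\prod_I U(L))=0$ for all $s\geq 1$ and all $V$ via the long exact sequence of Corollary~\ref{ses-homology}. Coherence of the $U(L_v)$ kills the vertex terms; for $s=1$ the Noetherian hypothesis on $U(L_e)$ enters only through the fact that the natural map $(\prod_I B_i)\otimes_{U(L_e)}A\to\prod_I(B_i\otimes_{U(L_e)}A)$ is injective, which forces the remaining map to be mono. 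This bypasses entirely the question of whether $U(L)$ is a ring HNN extension in Dicks' sense.

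Your fallback, however, has a real gap. Taking $V=I$ in Corollary~\ref{ses-homology} yields terms of the form $I\otimes_{U(L_e)}U(L)$ and $I\otimes_{U(L_v)}U(L)$, which are restrict-then-induce functors applied to $I$. But $I$ is only finitely generated over $U(L)$; its restriction to $U(L_e)$ or $U(L_v)$ need not be finitely generated at all, so neither Noetherianness of $U(L_e)$ nor coherence of $U(L_v)$ tells you anything about the $\FP$ type of these induced modules. (Lemma~\ref{indFPn} is about inducing the trivial module $k$ from a subalgebra, not an arbitrary $U(L)$-module.) This is exactly why Chase's criterion is the right tool here: it converts coherence into a flatness statement, and flatness interacts with the Mayer--Vietoris Tor sequence without requiring any control on restrictions.
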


\begin{proof}  The result follows by an adaptation of the main argument of \cite[Corollary 13]{A} using the long exact sequence of Tor functors above. For the reader's convenience, we summarize the idea of the proof  and shift from left coherence considered in \cite{A} to right coherence considered here. The main ingredients are:
\begin{itemize}
\item[i)]  By  a right-module version of Lemma 7 in \cite{A} (the result goes back to Chase), a ring $R$ is  right coherent if and only if $\prod_IR$ is   flat as left $R$-module, where $I$ is a set of cardinality $\mathrm{card} R$. And this is equivalent to any   module of the form $\prod_{i\in J}E_i$ being flat, where $J$ is an arbitrary index set and the $E_i$ are flat  left $R$-modules.

\item[ii)] If a ring $R$ is Noetherian, then it is coherent and moreover the natural map $(\prod_{i\in J}B_i)\otimes_RA\to\prod_{i\in J}(B_i\otimes_RA)$ is an injection where $J$ is any index set and $B_i,A$ are arbitrary $R$-modules.
\end{itemize}
Now, let  $V$ be any right $R$-module for $R=U(L)$ and $I$ an index set of cardinality $\mathrm{card}R$.  By the long exact sequence of Tor functors  Corollary \ref{ses-homology} for any $s\geq 1$,  $\Tor_s^L(V,\prod_IR)$ is sandwiched as follows:
$$\to \oplus_{v \in V(\Gamma)}\Tor_s^{U(L_v)}(V,\prod_I R)\to\Tor_s^R(V,\prod_IR)\to \oplus_{e \in E(\Gamma)}\Tor_{s-1}^{U(L_e)}(V,\prod_IR)\to$$
Now, as each $U(L_v)$ is coherent, i) implies  $\oplus_{v \in V(\Gamma)}\Tor_s^{U(L_v)}(V,\prod_IR)=0$  for $s \geq 1$. If $s>1$,    i) implies  that also $\oplus_{e \in E(\Gamma)}\Tor_{s-1}^{U(L_e)}(V,\prod_IR)=0$. This implies $\Tor_s^R(V,\prod_IR)=0$ for $s>1$. In the case when $s=1$ we have a commutative diagram with exact rows

$$\begin{tikzcd}
0\arrow[r]&\Tor_1^R(V,\prod_IR)\arrow[r]\arrow[d]&\oplus_{e \in E(\Gamma)}V\otimes_{ U(L_e)}(\prod_IR)\arrow[d,"\tau"]\arrow[r,"\mu"]&\oplus_{v \in V(\Gamma)}V\otimes_{ U(L_v)}(\prod_IR)\arrow[d]\\
&0  = \prod_I \Tor_1^R(V,R) \arrow[r]&\oplus_{e \in E(\Gamma)}\prod_I(V\otimes_{L_e}R)\arrow[r]&\oplus_{v \in V(\Gamma)}\prod_I(V\otimes_{L_v}R)\\
\end{tikzcd}$$
As $\tau$ is a monomorphism by ii), we deduce that also $\mu$ is mono thus  $\Tor_1(V,\prod_IR)=0$. As this happens for any $V$, we deduce that $\prod_IR$ is flat  as left module so i) implies that $R$ is coherent.
\end{proof}

Recall that groups of the form $\langle X\cup Y\mid u=v\rangle$ where $X$ and $Y$ are disjoint, $u$ is a word in $X$ and $v$ a word in $Y$ are called {\it one relator pinched} and groups  $\langle X,t\mid t^{-1}ut=v\rangle$ where $t\not\in X$, $u$ and $v$ are words in $X$ are called {\it one relator cyclically pinched}. By analogy, we set

\begin{definition}  A Lie algebra of the form $\langle X\cup Y\mid u=v\rangle$ where $X$ and $Y$ are disjoint, $u$ lies in the free Lie algebra generated by $X$ and $v$ lies in the free Lie algebra generated by $Y$ is called {\it one relator pinched} and a Lie algebra  $\langle X,t\mid  [u,t] =v\rangle$ where $t\not\in X$ and $u$ and $v$ lie in the free Lie algebra generated by $X$ are called {\it one relator cyclically pinched}. 
\end{definition}
 
 \begin{corollary} One relator pinched and one relator cyclically pinched $\mathbb{N}$-graded Lie algebras such that the corresponding relators are homogeneous are graded-coherent.
 \end{corollary}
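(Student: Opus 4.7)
The plan is to display each of these algebras as the fundamental Lie algebra of a graph of Lie algebras with just one edge, free vertex Lie algebras, and a one-dimensional edge Lie algebra, and then to apply Theorem \ref{coherentfund} followed by Lemma \ref{graded}.

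For a one relator pinched Lie algebra $L = \langle X \cup Y \mid u = v\rangle$ (with $u, v$ nonzero) I would identify $L$ with the amalgamated product $F(X) *_C F(Y)$, where $C = k$ is the one-dimensional abelian Lie algebra and the structural embeddings $\sigma : C \to F(X)$, $\tau : C \to F(Y)$ send the generator of $C$ to $u$ and $v$ respectively; both maps are monomorphisms since $u, v$ are nonzero in free Lie algebras, and comparing presentations recovers precisely the relation $u = v$. For a one relator cyclically pinched Lie algebra $L = \langle X, t \mid [u,t] = v\rangle$ (with $u \neq 0$) I would realise $L$ as the HNN extension of $F(X)$ from \eqref{eq:HNN} with associated subalgebra $A = ku$, stable letter $t$, and derivation $d : A \to F(X)$ given by $d(u) = -v$, so that the defining relation $[t, u] = d(u)$ reads $[u, t] = v$. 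The one point that requires verification is the derivation axiom for $d$: since $A$ is one-dimensional and abelian, the left-hand side $d([a,b]) = d(0)$ vanishes, while the right-hand side $[a, d(b)] + [d(a), b]$ vanishes by antisymmetry of the bracket in $F(X)$. This is the only subtle step I anticipate in the reduction.

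In both cases the vertex Lie algebras are free, so their universal enveloping algebras are free associative $k$-algebras, hence coherent by \cite[Cor.~3.3]{Swan}; the edge Lie algebra is the one-dimensional $k$, whose universal enveloping algebra is a polynomial ring in one variable, hence Noetherian. Theorem \ref{coherentfund} then yields that $U(L)$ is coherent, and because the relator is homogeneous with respect to the weight function on the generators, $L$ carries an $\N$-grading, so Lemma \ref{graded} upgrades coherence of $U(L)$ to graded-coherence of $L$. The only remaining corner is the degenerate case $u = 0$ (resp.\ $v = 0$), which trivialises the relator and leaves $L$ as a free product with a free factor, and is therefore directly graded-coherent.
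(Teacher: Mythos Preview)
Your proposal is correct and follows essentially the same approach as the paper: both recognise the pinched (resp.\ cyclically pinched) algebra as an amalgamated product (resp.\ HNN extension) with free vertex algebras and a one-dimensional edge algebra, then invoke Theorem~\ref{coherentfund} and Lemma~\ref{graded}. You supply more detail than the paper (the explicit embeddings, the derivation check, and the degenerate cases), though your remark on the degenerate case $u=0$ is slightly imprecise---the resulting algebra is a free product of $F(X)$ with a general one-relator algebra, not obviously graded-coherent by elementary means---but this edge case is tacitly excluded by the definition and the paper does not treat it either.
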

 \begin{proof} In both cases, the corresponding Lie algebra is $\N$-graded (because the relators are assumed to be homogeneous). Also, both constructions yield the fundamental Lie algebra of a graph of Lie algebras with free Lie algebras as vertices and  one dimensional Lie algebra as an edge.  For a free Lie algebra $F$, $U(F)$ is the free $K$-polynomial ring  which is coherent and for a one-dimensional Lie algebra $T$, $U(T)$ is the polynomial ring in one variable, which is Noetherian. It suffices to use Lemma \ref{graded} and Theorem \ref{coherentfund}.
 \end{proof}
 
In the case of a right angled Artin group $A_\Gamma$, $\Gamma$ a  finite connected graph, it was proven by Droms \cite{D} that $A_\Gamma$ is coherent if and only if the graph $\Gamma$ is {\it chordal}, meaning that there is no $k$-cycle, $k\geq 4$, embedded as a full subgraph of $\Gamma$. Using the previous results we can easily extend this to Lie algebras and we get Corollary D from the introduction.
Recall that right angled Artin Lie algebras $L_\Gamma$ are $\N$-graded with grade components $(L_\Gamma)_1=kv_1+\ldots + kv_m$ where $\{v_1,\ldots,v_m\}=V(\Gamma)$ and $(L_\Gamma)_i=[(L_\Gamma)_1,(L_\Gamma)_{i-1}]$ for $i\geq 2$.This implies $[L_\Gamma,L_\Gamma]=\oplus_{i\geq 2}(L_\Gamma)_i$.

\begin{corollary}  \label{cor-coherent} Let $L_\Gamma$ be a right angled Artin Lie algebra. The following are equivalent:
\begin{itemize}
\item[i)] $U(L_\Gamma)$ is coherent,

\item[ii)] $L_\Gamma$ is graded-coherent,

\item[iii)] the graph $\Gamma$ is chordal.
\end{itemize}
\end{corollary}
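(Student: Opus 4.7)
The implication (i) $\Rightarrow$ (ii) is immediate from Lemma \ref{graded}, so I focus on (iii) $\Rightarrow$ (i) and (ii) $\Rightarrow$ (iii). For (iii) $\Rightarrow$ (i) I induct on $|V(\Gamma)|$: if $\Gamma$ is complete then $L_\Gamma$ is abelian and $U(L_\Gamma)$ is a polynomial ring, hence Noetherian. Otherwise the standard structure theory of chordal graphs supplies a \emph{clique separator}, that is, a complete full subgraph $\Gamma_0$ and proper full subgraphs $\Gamma_1, \Gamma_2 \subseteq \Gamma$ with $\Gamma = \Gamma_1 \cup \Gamma_2$ and $\Gamma_1 \cap \Gamma_2 = \Gamma_0$. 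Every full subgraph of a chordal graph is chordal, so inductively $U(L_{\Gamma_i})$ is coherent for $i = 1, 2$, while $U(L_{\Gamma_0})$ is a polynomial ring and therefore Noetherian. The presentations force $L_\Gamma \cong L_{\Gamma_1} *_{L_{\Gamma_0}} L_{\Gamma_2}$, so Theorem \ref{coherentfund} delivers coherence of $U(L_\Gamma)$.

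For (ii) $\Rightarrow$ (iii) I argue contrapositively: if $\Gamma$ is not chordal, it contains a full $n$-cycle $C_n$ with $n \geq 4$. Whenever $\Gamma' \subseteq \Gamma$ is a full subgraph, the vertex inclusion induces both a graded monomorphism $L_{\Gamma'} \hookrightarrow L_\Gamma$ and a graded retraction $L_\Gamma \to L_{\Gamma'}$ sending the vertices outside $\Gamma'$ to zero; hence $L_{C_n}$ sits in $L_\Gamma$ as an $\mathbb{N}$-graded subalgebra and graded-coherence of $L_\Gamma$ forces graded-coherence of $L_{C_n}$. It therefore suffices to exhibit a finitely generated $\mathbb{N}$-graded subalgebra of $L_{C_n}$ which is not finitely presented. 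My candidate is the Lie algebra analogue of the Bestvina--Brady kernel: let $k\langle 1 \rangle$ denote the $1$-dimensional abelian Lie algebra concentrated in degree $1$, let $\phi : L_{C_n} \to k\langle 1 \rangle$ be the graded epimorphism sending every vertex $v_i$ to the generator of $k\langle 1 \rangle$, and set $H = \ker\phi$. The degree $1$ part of $H$ has basis $v_i - v_{i+1}$ for $1 \leq i \leq n-1$, and the identity $[v_i - v_{i+1}, v_{i+1} - v_{i+2}] = -[v_i, v_{i+2}]$, together with the Jacobi identity and a degree induction, shows that $H$ is generated by this degree $1$ part and is therefore finitely generated.

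The heart of the argument, and the step I expect to be hardest, is showing that $H$ is not of type $FP_2$; by Lemma \ref{neu} this is equivalent to $H$ not being finitely presented and so contradicts graded-coherence. I would run the Hochschild--Serre spectral sequence for $0 \to H \to L_{C_n} \to k\langle 1\rangle \to 0$. Since $U(k\langle 1\rangle) = k[t]$ has global dimension $1$, only the columns $p = 0, 1$ of the $E^2$-page are nonzero, so the sequence degenerates into short exact sequences
$$0 \to H_i(H, k)/tH_i(H, k) \to H_i(L_{C_n}, k) \to \ker\bigl(t : H_{i-1}(H, k) \to H_{i-1}(H, k)\bigr) \to 0.$$
From Proposition \ref{free-complex} applied to $C_n$ one reads $\dim_k H_i(L_{C_n}, k) = n$ for $i = 1, 2$ and $H_i(L_{C_n}, k) = 0$ for $i \geq 3$. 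The high-degree vanishing combined with the graded Nakayama lemma forces $H_i(H, k) = 0$ for $i \geq 3$ and $t$ to act injectively on $H_2(H, k)$; a direct check that $\dim_k H_1(H, k) = n-1$ with trivial $t$-action then yields $\dim_k H_2(H, k)/tH_2(H, k) = 1$. Hence $H_2(H, k)$ is a cyclic $k[t]$-module on which $t$ acts injectively, and therefore $H_2(H, k) \cong k[t]$, which is infinite dimensional over $k$. The delicate point is the interplay between the Nakayama vanishing in high degrees and the explicit computation of $H_1(H, k)$ together with its $t$-action.
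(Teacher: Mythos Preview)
Your argument is correct. The implications (i)$\Rightarrow$(ii) and (iii)$\Rightarrow$(i) match the paper exactly: Lemma~\ref{graded} for the first, and for the second the same induction via a clique separator, appealing to Theorem~\ref{coherentfund} with Noetherian edge algebra $U(L_{\Gamma_0})$.

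For (ii)$\Rightarrow$(iii) you and the paper take the same first step: pass to a full cycle $C_n$, and consider the kernel $H$ of the graded map $L_{C_n}\to k$ sending every vertex to $1$ (the paper phrases this as the ideal $I_\chi$ projecting onto $\ker\chi$ for a character $\chi$ nonzero on each vertex, which is the same object). The difference is in how non-finite-presentability of $H$ is established. The paper simply invokes \cite[Corollary~D]{K-M2}, a companion result characterising exactly when such coabelian ideals in $L_\Gamma$ are finitely generated (connectedness of $\Gamma$) and finitely presented ($1$-connectedness of the flag complex); since $C_n$ is connected but not $1$-connected for $n\geq 4$, the conclusion is immediate. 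You instead compute $H_2(H,k)$ directly from the two-column Hochschild--Serre spectral sequence, using the resolution of Proposition~\ref{free-complex} to read off $\dim H_i(L_{C_n},k)$ and graded Nakayama to force $H_2(H,k)\cong k[t]$. Your computation is sound (the claim that $H$ is generated in degree~$1$ does require the inductive Jacobi argument you allude to, showing $[v_1,H_{m-1}]\subseteq[H_1,H_{m-1}]$ via $H_{m-1}=[H_1,H_{m-2}]$), and it has the advantage of being self-contained within the present paper. The paper's route is shorter on the page because the work has been exported to \cite{K-M2}, and that external result is strictly more general (it handles arbitrary non-simply-connected $\Gamma$, not just cycles), but for this corollary your direct computation is entirely adequate.
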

    \begin{proof} The implication from i) to ii) is Lemma \ref{graded}. Assume that $L_\Gamma$ is graded-coherent, we claim that $\Gamma$ is chordal. Otherwise, we can find some  cycle $\Delta$ of, say, $k$ vertices, $k\geq 4$, which is a full subgraph of $\Gamma$. As any graded subalgebra of $L_\Delta$ is graded-coherent, $L_{\Delta}$ is graded-coherent. But this is a contradiction: take any linear map 
    $$\chi:L_{\Delta}/[L_{\Delta},L_{\Delta}]\to k$$
     such that $\chi(v)\neq 0$ for any $v$ vertex of $\Gamma$. Let $I_\chi\normal L_{\Delta}$ be the ideal of $L_{\Delta}$ that projects onto $\ker\chi$. As 
     $$\oplus_{i\geq 2}(L_{\Delta})_i=[L_{\Delta},L_{\Delta}]\leq I_\chi,$$ $I_\chi$ is also $\N$-graded. Moreover,
  by   \cite[Corollary D]{K-M2} $I_\chi$ is  finitely generated since $\Delta$ is connected  but not finitely presented since $\Delta$ is not 1-connected.
    
Next, assume that $\Gamma$ is chordal. We claim that $U(L_\Gamma)$ is coherent. As $\Gamma$ is chordal, either $\Gamma$ is complete or there are subgraphs $\Gamma_1,\Gamma_2\subseteq\Gamma$ such that $\Gamma=\Gamma_1\cup\Gamma_2$ and $\Gamma_1\cap\Gamma_2$ is complete (see \cite{D}). In the first case $L_\Gamma$ is finite dimensional abelian so $U(L_\Gamma)$ is an abelian polynomial ring, so is Noetherian  and coherent. In the second case  we may assume by induction that $U(L_{\Gamma_1})$ and $U(L_{\Gamma_2})$ are both coherent and as $U(L_{\Gamma_1\cap\Gamma_2})$ is Noetherian we also get the result by Theorem \ref{coherentfund}.
    
    \end{proof}
    
    By \cite{SDS} the commutator subgroup of a right angled Artin group is free if and only if the the underlying graph is chordal. We show here a Lie algebra version of this fact.
    
    \begin{lemma}  Let $L_\Gamma$ be a right angled Artin Lie algebra, where  $\Gamma$ is a chordal graph. Then
    $[L_{\Gamma},L_{\Gamma}]$ is a free Lie algebra.
    \end{lemma}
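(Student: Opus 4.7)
The plan is to induct on the number of vertices of $\Gamma$, using the classical Dirac decomposition of chordal graphs (already invoked in the proof of Corollary~\ref{cor-coherent}): either $\Gamma$ is complete, or it splits as $\Gamma=\Gamma_1\cup\Gamma_2$ where both $\Gamma_i$ are proper full subgraphs and $\Gamma_0:=\Gamma_1\cap\Gamma_2$ is a clique. In both cases $\Gamma_1,\Gamma_2$ are themselves chordal. The key tool is Theorem~\ref{free}, which lets us detect freeness of a subalgebra from its traces on the vertex and edge Lie algebras of a Bass--Serre decomposition.

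For the base case, when $\Gamma$ is complete $L_{\Gamma}$ is abelian, so $[L_{\Gamma},L_{\Gamma}]=0$, which is vacuously free. For the inductive step, comparing presentations in the category of Lie algebras shows that
\[
L_{\Gamma}=L_{\Gamma_1}*_{L_{\Gamma_0}}L_{\Gamma_2},
\]
so $L_\Gamma$ is the fundamental Lie algebra of a graph with two vertex Lie algebras $L_{\Gamma_1},L_{\Gamma_2}$ and one edge Lie algebra $L_{\Gamma_0}$. Set $H:=[L_\Gamma,L_\Gamma]$. To apply Theorem~\ref{free} to $H$, I need to verify two things: (a) $H\cap L_{\Gamma_0}=0$, and (b) $H\cap L_{\Gamma_i}$ is a free Lie subalgebra for $i=1,2$. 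Property (a) is immediate from the grading: $L_{\Gamma_0}$ is abelian and spanned by (common) vertices, hence lies in degree $1$ of $L_{\Gamma}$, while $H=\oplus_{i\geq 2}(L_\Gamma)_i$. Once (b) is established, Theorem~\ref{free} gives that $H$ is free.

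The main technical point is (b), namely the identification
\[
H\cap L_{\Gamma_i}=[L_{\Gamma_i},L_{\Gamma_i}]
\]
(after which induction finishes the job, since $\Gamma_i$ is chordal with fewer vertices). The inclusion $\supseteq$ is trivial. For $\subseteq$, I would exploit the fact that, since $\Gamma_i$ is a \emph{full} subgraph of $\Gamma$, there is a canonical retraction of Lie algebras $\pi_i:L_{\Gamma}\to L_{\Gamma_i}$ sending every vertex in $V(\Gamma_i)$ to itself and every other vertex to $0$; this is well defined on the presentation because any defining relation $[v,w]=0$ with $v,w\in V(\Gamma_i)$ remains a defining relation of $L_{\Gamma_i}$ (here fullness is essential), and any other defining relation involves a generator mapped to $0$. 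Given $x\in H\cap L_{\Gamma_i}$, we have $x=\pi_i(x)\in\pi_i(H)\subseteq[L_{\Gamma_i},L_{\Gamma_i}]$, yielding the claim. With (a) and (b) in place, Theorem~\ref{free} concludes that $H=[L_\Gamma,L_\Gamma]$ is free, completing the induction.
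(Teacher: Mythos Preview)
Your proof is correct and follows essentially the same approach as the paper: induct on $|V(\Gamma)|$ via the Dirac decomposition, write $L_\Gamma=L_{\Gamma_1}*_{L_{\Gamma_0}}L_{\Gamma_2}$, identify $[L_\Gamma,L_\Gamma]\cap L_{\Gamma_i}$ with $[L_{\Gamma_i},L_{\Gamma_i}]$, and conclude via Proposition~\ref{free-bass-serre}/Theorem~\ref{free}. The only cosmetic difference is that the paper obtains the intersection identity from the injectivity of $L_{\Gamma_i}/[L_{\Gamma_i},L_{\Gamma_i}]\to L_\Gamma/[L_\Gamma,L_\Gamma]$ (and hence also gets $[L_\Gamma,L_\Gamma]\cap L_{\Gamma_0}=[L_{\Gamma_0},L_{\Gamma_0}]=0$ as a special case), whereas you use the retraction $\pi_i$ and a separate grading argument for the edge algebra; these are equivalent and equally clean.
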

    
    \begin{proof} If $\Gamma$ is a  full graph there is nothing to prove as $L_{\Gamma}$ is abelian.
    
    By \cite{Di} since $\Gamma$ is chordal there is a decomposition $\Gamma_1 \cup \Gamma_2 = \Gamma$, where $\Gamma_1 \cap \Gamma_2 = \Gamma_0$, where $\Gamma_0$ is a full graph. This gives a decomposition
    $$L_{\Gamma} = L_{\Gamma_1} *_{L_{\Gamma_0}} L_{\Gamma_2}$$
    Note that by the defining relations of a right angled Lie algebra we have that the inclusion $L_{\Gamma_i} \to L_{\Gamma}$ induces an inclusion $L_{\Gamma_i} / [L_{\Gamma_i}, L_{\Gamma_i}] \to L_{\Gamma}/[L_{\Gamma}, L_{\Gamma}]$. Hence
    $[L_{\Gamma}, L_{\Gamma}] \cap L_{\Gamma_i} = [L_{\Gamma_i}, L_{\Gamma_i}] \hbox{ for  } i = 0,1,2.$
     Then 
     by induction on the number of vertices we can assume the results holds for $L_{\Gamma_1}$  and $L_{\Gamma_2}$. Thus
     $$[L_{\Gamma}, L_{\Gamma}] \cap L_{\Gamma_i} = [L_{\Gamma_i}, L_{\Gamma_i}] \hbox{ is a free Lie algebra  for  } i = 1,2.$$
     Then by Proposition \ref{free-bass-serre} $[L_{\Gamma}, L_{\Gamma}] $ is free.
    \end{proof}
    
    \begin{proposition} \label{neue}
    Let $L$ be an $\mathbb{N}$-graded Lie algebra such that $[L,L]$ is a free Lie algebra. Then every finitely generated graded Lie subalgebra $S$ of $L$ is of homological type $FP_{\infty}$.
    \end{proposition}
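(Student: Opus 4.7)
The plan is to invoke Proposition~\ref{homgrad2} with the graded ideal $N := S \cap [L,L]$ of $S$. First, since $[L,L]$ is a graded ideal of $L$, the intersection $N$ is a graded ideal of $S$, and the quotient $Q := S/N$ embeds as a graded subalgebra of the $\N$-graded abelian Lie algebra $L/[L,L]$. Because $S$ is finitely generated, so is $Q$; but any finitely generated $\N$-graded abelian Lie algebra is finite dimensional over $k$, so $Q$ is finite dimensional and hence $U(Q)$ is a polynomial ring in finitely many variables, in particular (left and right) Noetherian, as required by Proposition~\ref{homgrad2}.

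Next, $N$ is a Lie subalgebra of the free Lie algebra $[L,L]$, so by Shirshov--Witt (\cite[Theorem~2.8.3]{BokutKukin}) $N$ is itself free. Consequently $\Ho_0(N,k) = k$, $\Ho_1(N,k) = N/[N,N]$ (with the right $U(S)$-action induced by the adjoint action, which factors through $U(Q)$ since the action of $N$ on $N/[N,N]$ is trivial), and $\Ho_i(N,k) = 0$ for all $i \geq 2$. Hence, by Proposition~\ref{homgrad2}, in order to deduce that $S$ is graded $\FP_\infty$ (equivalently $\FP_\infty$) it is enough to show that $N/[N,N]$ is finitely generated as a right $U(Q)$-module.

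For this last step I would use the graded Nakayama lemma recalled in the paper: it suffices to verify that $N/[N,N] \otimes_{U(Q)} k$ is finite dimensional over $k$. Choose a finite-dimensional graded vector-space section of $S \to Q$, and let $V \subseteq S$ be its image, so that $S = N \oplus V$ as graded vector spaces and the augmentation ideal of $U(Q)$ is generated by the image of $V$. Then
\[
N/[N,N] \otimes_{U(Q)} k \;=\; N\big/\bigl([N,N] + [V,N]\bigr).
\]
Now $[S,S] = [N,N] + [V,N] + [V,V]$ (the bracket $[V,V]$ lies in $N$ because $Q$ is abelian), and $[V,V]$ is finite dimensional since $V$ is. This gives a surjection
\[
N\big/\bigl([N,N] + [V,N]\bigr) \;\twoheadrightarrow\; N/[S,S]
\]
with kernel a quotient of the finite-dimensional space $[V,V]$. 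Because $S$ is finitely generated, $S/[S,S]$, and hence its subspace $N/[S,S]$, is finite dimensional. Therefore $N/[N,N] \otimes_{U(Q)} k$ is finite dimensional, Nakayama yields the desired finite generation over $U(Q)$, and Proposition~\ref{homgrad2} completes the proof that $S$ is $\FP_\infty$.

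The main content of the argument is really the finite generation of $N/[N,N]$ over $U(Q)$; everything else is formal (freeness of $N$ kills $\Ho_{\geq 2}(N,k)$ and $Q$ being finite dimensional provides the Noetherian quotient). The small subtlety to keep track of is that all the vector spaces, subalgebras, and sections in sight are graded so that Nakayama and Proposition~\ref{homgrad2} apply as stated.
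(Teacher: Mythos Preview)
Your proof is correct and follows the same skeleton as the paper's: set $N=S\cap[L,L]$, note that $Q=S/N$ is finite-dimensional abelian so $U(Q)$ is Noetherian, use Shirshov--Witt to see $N$ is free, and reduce everything to showing that $N/[N,N]$ is finitely generated over $U(Q)$ via graded Nakayama.

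The one difference is organisational. The paper runs the Lyndon--Hochschild--Serre spectral sequence $E^2_{i,j}=H_i(Q,H_j(N,k))\Rightarrow H_{i+j}(S,k)$, extracts from the low-degree terms that $E^2_{0,1}=H_0(Q,N/[N,N])$ is finite dimensional (because $H_1(S,k)$ and $H_2(Q,k)$ are), then feeds back through the spectral sequence to conclude all $H_n(S,k)$ are finite dimensional, and finally applies Proposition~\ref{homgrad2} in its corollary form ($N=S$). You instead apply Proposition~\ref{homgrad2} directly with the ideal $N$ and compute $H_0(Q,N/[N,N])=N/([N,N]+[V,N])$ by hand, bounding it between $N/[S,S]$ and a quotient of $[V,V]$. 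Your route is a bit more elementary in that it bypasses the spectral sequence entirely; the paper's route makes the role of the two-row spectral sequence explicit and shows transparently why $H_n(S,k)$ is finite dimensional in each degree. Both arrive at exactly the same Nakayama computation, so the mathematical content is the same.
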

    
    \begin{proof}
    Consider the short exact sequence of graded Lie algebras $0 \to F \to  S \to Q \to 0$, where  $F =S \cap [L, L]$, then $Q$ is finitely generated abelian. Consider the  Lie algebra version of the LHS spectral sequence 
    $$E^2_{i,j} = H_i (Q, H_j(F, k))$$
    that converges to $H_{i+j}(S,k)$.
    Since $F$ is free $E^2_{i,j}= 0$ for $j \geq 2$, hence the short exact sequence is concentrated in two lines. Hence
    $E^{\infty}_{i,j} = E^3_{i,j}$ is a subquotient of $E^2_{i,j}$, so $ \dim E^3_{i,j} \leq \dim  E^2_{i,j}$ and there is a short exact sequence
    $$0 \to E^3_{n-1,1} \to H_n (S, k) \to E^3_{n, 0} \to 0.$$
    Since $S$ is finitely generated we have that $H_1(S,k)$ is finite dimensional, in particular 
     $E^2_{0,1} = H_0(Q, H_1(F, k))$ is finite dimensional. Since $V = H_1(F, k) \simeq F/ [F,F]$ is a graded $U(Q)$-module via the adjoint action we know that $V$ is finitely generated as $U(Q)$-module if and only if $H_0(Q, V)$ is finite dimensional.  Note that $U(Q)$ is a Noetherian ring, so once we have that $V$ is a finitely generated as $U(Q)$-module, it is of type $FP_{\infty}$ over $U(Q)$ and hence $H_i(Q, V)$ is finite dimensional for every $i \geq 1$. In particular $E^2_{i,1} = H_i(Q, V)$ is finite dimensional for every $i \geq 0$. 
     
     Finally $E^2_{i,0} = H_i(Q, k)$ is finite dimensional since $Q$ is finite dimensional abelian Lie algebra. Thus $\dim_k(H_i(S, k)) < \infty$ for every $i$ and by Proposition \ref{homgrad2} this implies that $S$ is of homological type $FP_{\infty}$.
    \end{proof}
    
   The following result follows directly from Proposition \ref{neue}. Note it strengthens Corollary \ref{cor-coherent}.
     
    \begin{corollary}  Let $L_\Gamma$ be a right angled Artin Lie algebra, where  $\Gamma$ is a chordal graph. Let $S \leq L$ be a finitely generated $\mathbb{N}$-graded Lie subalgebra. Then $S$ is of type $FP_{\infty}$.
    \end{corollary}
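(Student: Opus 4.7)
The plan is to simply combine the two immediately preceding results. The lemma just above establishes that when $\Gamma$ is chordal, the derived subalgebra $[L_\Gamma, L_\Gamma]$ is free, while Proposition~\ref{neue} asserts that in any $\mathbb{N}$-graded Lie algebra whose derived subalgebra is free, every finitely generated $\mathbb{N}$-graded Lie subalgebra is of type $\FP_\infty$. The corollary is precisely the specialization of Proposition~\ref{neue} to $L = L_\Gamma$.

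More concretely, I would first note that $L_\Gamma$ carries its standard $\mathbb{N}$-grading in which each vertex has degree~$1$, so the hypothesis that $S$ is a finitely generated $\mathbb{N}$-graded Lie subalgebra fits the setup of Proposition~\ref{neue} verbatim. Then I would invoke the preceding lemma to verify the remaining hypothesis, namely that $[L_\Gamma, L_\Gamma]$ is free. A single application of Proposition~\ref{neue} then yields the conclusion that $S$ is $\FP_\infty$.

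Since the two nontrivial ingredients (the freeness of the commutator subalgebra for chordal graphs, and the LHS spectral sequence argument inside Proposition~\ref{neue}) are already in place, there is no genuine obstacle here; the corollary is essentially a packaging statement. It is, however, worth emphasizing that the conclusion strictly sharpens Corollary~\ref{cor-coherent}: the coherence of $U(L_\Gamma)$ established there only guarantees that such an $S$ is $\FP_2$ (equivalently finitely presented, since $S$ is $\mathbb{N}$-graded), whereas the present statement promotes this to the full $\FP_\infty$ conclusion.
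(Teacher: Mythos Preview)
Your proposal is correct and matches the paper's approach exactly: the paper simply states that the corollary follows directly from Proposition~\ref{neue}, implicitly using the preceding lemma to supply the freeness of $[L_\Gamma,L_\Gamma]$. Your observation that this strengthens Corollary~\ref{cor-coherent} also mirrors the paper's own remark.
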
 
    
    It is known that an ascending HNN-extension of a free group is coherent \cite{F-H}. As a corollary of Proposition \ref{neue}
    we show that the same holds for Lie algebras. We call an HNN-extension Lie algebra  $W =\langle L ,t | [t,a]=d(a)\text{ for }a\in  A\rangle$ ascending if $A = L$.
    
    \begin{lemma} An ascending HNN-extension of a free Lie algebra is graded-coherent.
    \end{lemma}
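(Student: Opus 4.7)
The plan is to reduce the statement to Proposition \ref{neue} by showing that, for an ascending HNN-extension $W=\langle L,t\mid [t,a]=d(a)\text{ for }a\in L\rangle$ of a free Lie algebra $L$, the commutator subalgebra $[W,W]$ is itself a free Lie algebra. Once we have this, Proposition \ref{neue} (applied with the Lie algebra $W$, which is $\mathbb{N}$-graded because $L$ is graded and $d$ is a graded derivation, with $t$ of appropriate degree) will immediately give that every finitely generated $\mathbb{N}$-graded subalgebra $S\leq W$ is of type $FP_\infty$, so in particular $FP_2$. Since $S$ is $\mathbb{N}$-graded, Lemma \ref{neu} then upgrades $FP_2$ to finite presentation, which is exactly what graded-coherence requires.

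The key structural step is the following normal form observation. I would first verify that $W=L\oplus kt$ as a $k$-vector space. Let $M=L+kt\subseteq W$; the relation $[t,a]=d(a)\in L$ together with $[L,L]\subseteq L$ and $[t,t]=0$ shows that $M$ is closed under the Lie bracket, so $M$ is the Lie subalgebra generated by $L\cup\{t\}$, i.e. $M=W$. The sum is direct because $L$ embeds in $W$ by Wasserman's result on HNN-extensions of Lie algebras cited in Subsection \ref{sec:HNN}, and the image of $t$ is not in $L$ (otherwise $t$ would be superfluous as a stable letter in the presentation).

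Given $W=L\oplus kt$, I compute
\[
[W,W]=[L,L]+[L,kt]+[kt,kt]=[L,L]+d(L)\subseteq L.
\]
Hence $[W,W]$ is a Lie subalgebra of the free Lie algebra $L$, and by the Shirshov--Witt theorem (\cite[Thm.~2.8.3]{BokutKukin}) any subalgebra of a free Lie algebra is free; thus $[W,W]$ is free.

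Having established that $[W,W]$ is free, I apply Proposition \ref{neue} to $W$: any finitely generated $\mathbb{N}$-graded subalgebra $S\leq W$ is of type $FP_\infty$. In particular $S$ is $FP_2$, and since $S$ is $\mathbb{N}$-graded Lemma \ref{neu} yields that $S$ is finitely presented. Therefore $W$ is graded-coherent. The only place where one needs to be a little careful is in making sure that the ambient HNN-extension inherits an $\mathbb{N}$-grading (so that the statement of graded-coherence is meaningful), which forces $d$ to be a graded derivation of some degree $m$ and $t$ to be placed in degree $m$; this is built into the ascending-HNN setup and is the only mildly non-trivial point in an otherwise direct application of the earlier results.
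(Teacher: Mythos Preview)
Your proof is correct and follows essentially the same route as the paper: both arguments observe that in the ascending case $L$ is an ideal of codimension one in $W$, so $[W,W]\subseteq L$ is free by Shirshov--Witt, and then invoke Proposition~\ref{neue}. You are simply more explicit than the paper in verifying $W=L\oplus kt$, in noting the grading hypothesis on $d$, and in invoking Lemma~\ref{neu} to pass from $FP_2$ to finite presentation---all details the paper leaves implicit.
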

    \begin{proof}
    By assumption $W =\langle L ,t | [t,a]=d(a)\text{ for }a\in  A\rangle$  with $L = A$ free Lie algebra. Thus
    $A$ is an ideal of codimension 1 in $W$, hence $[W, W] \subseteq A$ is free. Then we can apply Proposition \ref{neue}.
    \end{proof}

    By an analogy with the case of groups we may ask:
    
    \begin{question} Assume that $L$ is the fundamental Lie algebra of a graph of Lie algebras such that the vertex Lie algebras $L_v$ are coherent  and the edge algebras $L_e$ have the property that every subalgebra is finitely generated. Is then $L$ coherent? (again, this is true for groups, see \cite{Wilton}).\end{question} 
    
    \section{An example} \label{example}

    Let
    $$L = M * N, \hbox{ where } M = ka \oplus k b, N = k x \hbox{ are abelian Lie algebras }$$
    Thus there is a presentation ( in terms of generators and relations) $$L = \langle a,b,x ~| ~ [a,b]  \rangle.$$ Note that $L$ is a $\mathbb{N}$-graded Lie algebra, where $a,b,x$ all have degree 1.
    Let $S$ be the $\mathbb{N}$-graded Lie subalgebra $$S = \langle a,b, z = [x,a], t = [x,b] \rangle \leq L$$
    \begin{lemma} \label{pres}  
    
    a) $a,b,z,t$ is a minimal set of generators of $S$ and $\dim_k  H_2(S,k) = 2$.
    
    b) $S$ has a presentation $\langle a,b,z,t  | [z,b] - [t,a], [a,b] \rangle$.
    \end{lemma}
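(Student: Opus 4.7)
The plan is to exploit the semidirect decomposition $S=M\ltimes I$, where $M=ka\oplus kb$ is the abelian subalgebra of degree $1$ and $I=S_{\ge 2}$ is the ideal of $S$ generated by $z,t$, together with the Hochschild--Serre spectral sequence for Lie algebra extensions. For part (a), $S$ inherits the $\mathbb{N}$-grading from $L$ with $a,b\in S_1$ and $z,t\in S_2$; since $[a,b]=0$ in $L$ we have $[S,S]_2=0$, so $a,b,z,t$ give a basis of $S/[S,S]$ in degrees $1$ and $2$, and higher degrees vanish because $S$ is generated in degrees $\le 2$. Hence $\dim H_1(S,k)=4$ and $\{a,b,z,t\}$ is minimal.

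For $\dim H_2(S,k)=2$, I would first verify that $I$ is a free Lie algebra. In the free product decomposition $L=M*_0 N$ with $N=kx$, the grading forces $I\subseteq L_{\ge 2}$, hence $I\cap M=I\cap N=0$, and Proposition \ref{free0}(b) applies. Therefore $H_q(I,k)=0$ for $q\ge 2$, and the Hochschild--Serre spectral sequence
\[
E^2_{p,q}=H_p(M,H_q(I,k)) \Longrightarrow H_{p+q}(S,k)
\]
is concentrated in the rows $q=0,1$.

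The key technical step is identifying $V:=H_1(I,k)=I/[I,I]$ as a $U(M)=k[a,b]$-module. The $U(M)$-generators $\bar z,\bar t$ of $V$ satisfy $b\bar z=a\bar t$, coming from the Jacobi identity $[x,[a,b]]=0$. This produces a $U(M)$-surjection $\pi\colon\mathfrak{m}\twoheadrightarrow V$, $a\mapsto\bar z$, $b\mapsto\bar t$, from the Koszul presentation $\mathfrak{m}=U(M)^2/U(M)(b,-a)$ of the maximal ideal $\mathfrak{m}=(a,b)$. To prove $\pi$ is an isomorphism, I would compare with the larger ideal $J\lhd L$ generated by $x$: $J$ is also free by Proposition \ref{free0}(b), and a parallel Hochschild--Serre calculation for $L=M\ltimes J$, using the easily computed $\dim H_1(L,k)=3$ and $\dim H_2(L,k)=1$ together with the fact that $H_2(L,k)\to H_2(M,k)$ is an isomorphism (both spanned by the class of $[a,b]$), forces $J/[J,J]\cong U(M)$, free cyclic on $\bar x$. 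The inclusion $I\hookrightarrow J$ then induces $V\to J/[J,J]=U(M)$, $\bar z\mapsto -a$, $\bar t\mapsto -b$, with image $\mathfrak{m}$, and its composition with $\pi$ equals $-\mathrm{id}_{\mathfrak{m}}$; thus both maps are isomorphisms and $V\cong\mathfrak{m}$.

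With $V\cong\mathfrak{m}$, the Tor computation over $k[a,b]$ gives $\dim H_p(M,\mathfrak{m})=2,1,0,\ldots$ for $p=0,1,2,\ldots$. Combined with $\dim H_p(M,k)=1,2,1,0,\ldots$, the only potentially nonzero differential $d^2\colon E^2_{2,0}\to E^2_{0,1}$ must vanish because $\dim H_1(S,k)=4=\dim E^\infty_{1,0}+\dim E^\infty_{0,1}$; therefore $\dim H_2(S,k)=1+1=2$. Part (b) now follows: the relators $[a,b]$ and $[z,b]-[t,a]$ lie in $R=\ker(F(a,b,z,t)\twoheadrightarrow S)$ in distinct graded degrees $2$ and $3$, and the spectral sequence shows $H_2(S,k)_2$ and $H_2(S,k)_3$ are each $1$-dimensional (from $E^\infty_{2,0}$ and $E^\infty_{1,1}$); by the Hopf formula, nonvanishing in the appropriate degree is immediate from $[R,F]_2=0$ and $[R,F]_3=k[a,[a,b]]+k[b,[a,b]]$, so by Lemma \ref{presentation} these two relators form a minimal, hence complete, set of defining relations. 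The main technical obstacle is the identification $V\cong\mathfrak{m}$: producing $\pi$ is straightforward, but excluding further $U(M)$-relations requires the detour through $J/[J,J]$ to construct the converse map and verify the composite fixes the generators.
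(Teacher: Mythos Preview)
Your proof is correct and follows essentially the same strategy as the paper: both exploit the split extension $0\to I\to S\to M\to 0$ (the paper's $J_0=J\cap S$ coincides with your $I=S_{\ge 2}$), the freeness of $I$, and the two-row Hochschild--Serre spectral sequence. The only substantive difference is in how $V=I/[I,I]$ is identified as a $U(M)$-module. The paper constructs an explicit free Lie basis $T_0=\{\ad(a)^i\ad(b)^j(x):i+j\ge 1\}$ for $I$ (verified via an auxiliary homomorphism $L\to U(M)\rtimes M$), from which the presentation $0\to U(M)\to U(M)^2\to V\to 0$ is read off directly; you instead run the spectral sequence once more for $L=M\ltimes J$ to force $J/[J,J]\cong U(M)$, and then use the induced map $V\to J/[J,J]$ to split your surjection $\pi\colon\mathfrak{m}\to V$. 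The paper also dispatches $d^2$ via the splitting of $S\twoheadrightarrow M$, whereas you use the dimension count from $\dim H_1(S,k)=4$; both arguments are valid. Your route is slightly more homological and avoids writing down the free basis, at the cost of the extra spectral-sequence computation for $L$; the paper's route is more explicit and makes the $U(M)$-module structure of $V$ transparent from the outset.
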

    
    \begin{proof}
   a) Consider first the ideal $J$ of $L$ generated by $x$. Note $J$ is a graded ideal of $L$ that as a Lie algebra is generated by the homogeneous elements $$T = \{ ad(a)^i ad(b)^j (x) ~| ~i,j \geq 0\}$$
    where $ad(y)(s) = [s,y]$. Furthermore by Proposition \ref{free-bass-serre} $J$ is a free Lie algebra and if $T$ are linearly independent in $J/ [J,J]$ we can conclude that $T$ is a free basis of $J$. To show this consider the Lie algebra $M_0 = U(M) \leftthreetimes M$, where the adjoint action of $M$ on  the ideal $U(M)$ of $M_0$ is given by right multiplication.  Note that $M_0$  is a quotient of $L$ via the homomorphism $\varphi$ that is identity on $a$ and $b$ and sends $x$ to $ 1 \in U(M)$. Note that $\varphi(J) = U(M)$ is abelian hence $J/ [J,J]$ maps surjectively to $U(M)$ and $\varphi(T)$ is a basis of $U(M)$ as a $k$-vector space. Hence $T$ is linearly independent in $J/ [J,J]$ as required.
    
    Consider $J_0 = J \cap S$. Thus we have a split short exact sequence of Lie algebras \begin{equation} \label{split1} 0 \to J_0 \to S \to M \to 0 \end{equation}  Note that $J_0$ as a Lie algebra is generated by $T_0 = T \setminus \{ x \}$, hence $J_0$ is a free Lie algebra with a free basis $T_0$.
    Then
    $J_0/ [J_0, J_0]$ is a $k$-vector space with  basis $T_0$. Note that $ad(a)^i ad(b)^{j-1} (t) = ad(a)^i ad(b)^j (x) = ad(a)^{i-1} ad(b)^j (z)$, hence
      $J_0/ [J_0, J_0]$  is a right $U(M)$-module generated by $z$ and $t$ subject to the relation $ad(a) (t) = ad(b) (z)$. Furthermore
      $$J_0/ [J_0, S] \simeq (J_0/ [J_0, J_0]) \otimes_{U(M)} k \simeq k \oplus k. $$ Since $S$ is a split extension of $J_0$ by $M$ and $M$ is abelian we conclude that
      $$S/[S, S] \simeq J_0/ [J_0, S] \oplus M \hbox{ has dimension 4}.$$
      Since $a,b,z,t$ is a generating set of $S$ we deduce that $a,b,z,t$ is a minimal generating set of $S$.
      
      Consider the spectral sequence
      $$E^2_{i,j}  = H_i(M, H_j(J_0, k))$$
      converging to $H_{i+j} (S,k)$.
      Since $J_0$ is free $H_j(J_0, k) = 0$ for $j \geq 2$. Hence $E^{\infty}_{i,j} = 0$ for $j \geq 2$.
      Since (\ref{split1}) splits the map
      $$\mu : H_2(S, k) \to H_2(M,k) = E^2_{2,0}$$ is a split epimorphism. By the convergence of the spectral sequence there
       is an exact sequence
      $$
     0\to E^\infty_{1,1} \to H_2(S,k) \to E^{\infty}_{2,0}     \to 0$$
      and $E^{\infty}_{2,0} = E^3_{2,0}$ is precisely the image of $\mu$. Thus $E^{\infty}_{2,0} = E^{3}_{2,0} = E^{2}_{2,0}$. 
      
      By the bi-degrees of the differentials and since $E_{3,0}^{ \infty} = 0$ we have $$E^\infty_{1,1}  = E^2_{1,1} = H_1(M, H_1(J_0, k))\simeq H_1(M, J_0/ [J_0, J_0]).$$ Let $V = J_0/ [J_0, J_0]$. Recall that $V$ is is a right $U(M)$-module generated by $z$ and $t$ subject to the relation $ad(a) (t) = ad(b) (z)$. Hence there is a short exact sequence of right $U(M)$-modules
      $$0 \to U(M) \to U(M) \oplus U(M) \to V \to 0$$Then the long exact sequence in homology implies
      $$ \ldots \to 0 = H_1(M, U(M) \oplus U(M)) \to H_1(M, V) \to U(M) \otimes_{U(M)} k \to$$ $$ (U(M) \oplus U(M)) \otimes_{U(M)} k \to V \otimes_{U(M)} k \to 0$$that can be  rewritten as 
$$ \ldots \to 0 = H_1(M, U(M) \oplus U(M)) \to H_1(M, V) \to k \to k^2 \to k^2 \to 0$$
Hence $H_1(M, V) \simeq k$ and
$$\dim_k H_2(S,k) = 
\dim E_{1,1}^2 + \dim E_{2,0}^2 = 
\dim_k H_1(M, V) + \dim_k H_2(M, k) = 1 + 1 = 2$$

b) Since $a,b,z,t$ is a minimal generating set by Lemma \ref{presentation} we conclude that $S$ has a presentation
$\langle a,b,z,t ~| ~R_0 \rangle$ where $|R_0| = 2$ is minimal possible. Note that  the elements of  $R_0$ should be of minimal possible degrees (as we work with  the $\mathbb{N}$-graded Lie algebra $S$). On other hand it is easy to verify by hand that
$[z,b] - [t,a], [a,b] $ are the relations of the smallest degree ( recall that $a,b$ have degree 1 and $z,t$ have degree 2).
Hence we get that $R_0 $ can be chosen $\{ [z,b] - [t,a], [a,b] \}$.
    \end{proof}
    
    \begin{lemma} \label{free-product1}  Let $M,N,L,S$ be as before. Then $S$ is a Lie subalgebra of $L = M * N$ that contains $M$ but $S \not= M * Q$ for  any Lie algebra $Q$.
    \end{lemma}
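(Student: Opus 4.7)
The plan is to derive a contradiction from an assumed decomposition $S = M * Q$, using the computation $\dim_k H_2(S,k) = 2$ from Lemma~\ref{pres}(a). The key point is that if $S$ is the free product of the given subalgebra $M$ with some $Q$, then $Q$ is forced to be a free Lie algebra, so Mayer--Vietoris would give $\dim_k H_2(S,k) = 1$, contradicting Lemma~\ref{pres}(a).

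First, I observe that any such decomposition $S = M*Q$ provides a retraction $S \to Q$ killing $M$; its kernel is the ideal $I_M$ of $S$ generated by $M$, so $Q \simeq S/I_M$. Next, I compute $S/I_M$ from the presentation $\langle a, b, z, t \mid [a,b],\ [z,b]-[t,a]\rangle$ supplied by Lemma~\ref{pres}(b): the ideal of the free Lie algebra $F(a,b,z,t)$ generated by $\{a,b\}$ already contains both defining relators (each is a bracket involving $a$ or $b$), so the quotient collapses to the free Lie algebra $F(z,t)$. Hence $Q$ must be free on two generators, and therefore $H_2(Q,k) = 0$.

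Finally, I apply Corollary~\ref{ses-homology} to the free product $S = M * Q$, regarded as the fundamental Lie algebra of a graph of Lie algebras with two vertices and edge algebra $0$. Since $H_i(0,k)=0$ for $i \geq 1$, the resulting long exact sequence in homology collapses to
$$H_2(S,k) \simeq H_2(M,k) \oplus H_2(Q,k) = k \oplus 0,$$
using that $M$ is $2$-dimensional abelian. This contradicts $\dim_k H_2(S,k) = 2$ and completes the proof.

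The genuine calculation has already been absorbed into Lemma~\ref{pres}, where $\dim_k H_2(S,k) = 2$ is established via a spectral sequence argument; once this is granted, the remaining steps are essentially mechanical. The only delicate point is to be sure that the conclusion \emph{$S \neq M*Q$} is read as asserting the absence of an internal free-product decomposition with the specified $M \leq S$, which is what validates the identification $Q \simeq S/I_M$ used above.
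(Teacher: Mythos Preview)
Your proof is correct and follows essentially the same approach as the paper: assume $S = M * Q$, identify $Q \simeq S/I_M$ via the presentation from Lemma~\ref{pres}(b) to conclude $Q$ is free on $z,t$, and then use Mayer--Vietoris to compute $\dim_k H_2(M*Q,k) = \dim_k H_2(M,k) = 1$, contradicting $\dim_k H_2(S,k) = 2$. The only difference is cosmetic: you explicitly invoke Corollary~\ref{ses-homology} and spell out that the edge algebra is trivial, whereas the paper states the $H_2$ decomposition for a free product directly.
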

    
    \begin{proof}
    Suppose $S = M * Q$ and let $I$ be the ideal of $S$ generated by $M$. Then using Lemma \ref{pres}  we have
    $$Q \simeq S / I = \langle a,b,z,t  | [z,b] - [t,a], [a,b], a,b \rangle = \langle z, t | \emptyset \rangle$$
    is a free Lie algebra. 
    Then
    $$2 = \dim_k H_2(S, k) = \dim_k H_2(M * Q, k) =$$ $$ \dim_k(H_2(M,k) \oplus H_2(Q, k) )= \dim_k H_2(M, k) = 1$$
    a contradiction.
    \end{proof}
    
     \begin{lemma} \label{lemma-long}
    Consider the  Lie algebras $S = \langle x_1, x_2, x_3, x_4 ~| [x_1, x_2], [x_3, x_2] + [x_1, x_4]\rangle$, $\widetilde{S} = \langle x_1, x_2, x_3, x_4 ~| [x_1, x_2], [x_1, x_3] \rangle$
    and
    $\widehat{S} = \langle x_1, x_2, x_3, x_4 ~| [x_1, x_2], [x_4, x_3] \rangle$. Then the nilpotent Lie algebras $E = S/ [S,S,S], \widetilde{E} = \widetilde{S}/ [\widetilde{S}, \widetilde{S}, \widetilde{S}]$ and
     $\widehat{E} = \widehat{S} /[\widehat{S}, \widehat{S} ,\widehat{S} ]$  are pairwise non-isomorphic. In particular the Lie algebras $S$, $\widetilde{S}$ and $\widehat{S}$ are pairwise non-isomorphic.
    \end{lemma}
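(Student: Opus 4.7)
The plan is to reduce the claim to a $GL(V)$-classification of the relator subspace inside $\Lambda^2 V$, where $V$ is a $4$-dimensional $k$-vector space. Each of $E$, $\widetilde E$, $\widehat E$ is $2$-step nilpotent (the third term of the lower central series has been killed), and its abelianization $V = E/[E,E]$ is $4$-dimensional because both defining relations already lie in degree $2$. The bracket descends to a surjection $\Lambda^2 V \twoheadrightarrow [E,E]$ whose kernel $R$ is the $2$-dimensional subspace spanned by the defining relators; since the commutator subalgebra is a characteristic ideal, any Lie algebra isomorphism $E \to E'$ induces some $\bar\phi \in GL(V)$ with $\Lambda^2\bar\phi(R) = R'$, and conversely any such $\bar\phi$ lifts (after choosing vector space splittings $V \hookrightarrow E$, $V' \hookrightarrow E'$) to a Lie algebra isomorphism. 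Thus it is enough to show that the three $2$-dimensional subspaces
\begin{align*}
R &= \operatorname{span}\bigl(x_1 \wedge x_2,\ x_1 \wedge x_4 - x_2 \wedge x_3\bigr),\\
\widetilde R &= \operatorname{span}\bigl(x_1 \wedge x_2,\ x_1 \wedge x_3\bigr),\\
\widehat R &= \operatorname{span}\bigl(x_1 \wedge x_2,\ x_3 \wedge x_4\bigr)
\end{align*}
sit in pairwise distinct $GL(V)$-orbits.

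The invariant I would use is the restriction of the Pfaffian $\operatorname{Pf}\colon \Lambda^2 V \to k$, given in the chosen basis by $\operatorname{Pf}(\omega) = \omega_{12}\omega_{34} - \omega_{13}\omega_{24} + \omega_{14}\omega_{23}$. Because $\operatorname{Pf}(g\cdot\omega) = \det(g)\,\operatorname{Pf}(\omega)$ for every $g \in GL(V)$, the restriction of $\operatorname{Pf}$ to a $2$-dimensional subspace of $\Lambda^2 V$ is a quadratic form defined up to a non-zero scalar multiple, and its projective zero scheme in $\mathbb{P}(R) \cong \mathbb{P}^1$ is a genuine $GL(V)$-invariant of $R$. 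Writing a general element of each subspace as $a\omega_1 + b\omega_2$ with $\omega_1,\omega_2$ the listed basis vectors, a direct computation in Pl\"ucker coordinates gives
\[
\operatorname{Pf}|_{R} = -b^2,\qquad \operatorname{Pf}|_{\widetilde R} = 0, \qquad \operatorname{Pf}|_{\widehat R} = ab.
\]
As polynomials on a $2$-dimensional $k$-space these are, respectively, a non-zero square of a linear form, the zero polynomial, and a product of two distinct linear forms; hence they are pairwise inequivalent up to rescaling over any field $k$, regardless of characteristic. Consequently $R$, $\widetilde R$ and $\widehat R$ lie in three different $GL(V)$-orbits, $E$, $\widetilde E$ and $\widehat E$ are pairwise non-isomorphic, and the ``in particular'' part follows at once: $[S,S,S]$ is a characteristic ideal, so any isomorphism of $S$, $\widetilde S$ or $\widehat S$ would descend to one of $E$, $\widetilde E$ or $\widehat E$.

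The main obstacle, as I see it, is the foundational reduction of the first paragraph: making precise that isomorphism of these $2$-step nilpotent Lie algebras corresponds bijectively to $GL(V)$-orbits of the relator subspace $R \subset \Lambda^2 V$, together with checking that the Pfaffian quadratic form on $R$ really is an invariant of the orbit (up to $k^\times$). Once that is secured, the Pfaffian identities in the three cases are a one-line verification and the conclusion is immediate.
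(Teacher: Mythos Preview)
Your proof is correct and takes a genuinely different route from the paper's. Both arguments begin with the same reduction: any isomorphism between two of $E,\widetilde E,\widehat E$ may be replaced by a graded one, which amounts to an element of $GL(V)$ carrying one relator plane in $\Lambda^2 V$ to the other. From there the paper proceeds by ad hoc invariants tailored to each pair. To separate $\widetilde E$ from the other two it observes that $\widetilde E$ has a one-generated ideal (generated by $x_1$) whose quotient is free nilpotent of class~$2$ on three generators, and then checks by hand that no such ideal exists in $E$ or $\widehat E$. To separate $E$ from $\widehat E$ it notes that in $\widehat E$ one can find linearly independent $y_1,y_2,y_3,y_4\in V$ with $[y_1,y_2]=[y_3,y_4]=0$, and rules this out for $E$ by solving the resulting system of equations in the coefficients.

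Your Pfaffian argument replaces these case analyses with a single uniform invariant: the quadratic form $\mathrm{Pf}|_R$ on the relator plane, well-defined up to $GL_2$-equivalence and scaling. Its projective zero locus (all of~$\mathbb P^1$, one double point, two simple points in the three cases) distinguishes the orbits immediately and over any field, including characteristic~$2$. In fact your invariant encodes the paper's two ad hoc observations: decomposable $2$-vectors are exactly the Pfaffian zeros, so ``$\widetilde R$ consists entirely of decomposables'' is $\mathrm{Pf}|_{\widetilde R}=0$, while ``$\widehat R$ contains two independent decomposables with disjoint supports but $R$ does not'' is the count of zeros of $\mathrm{Pf}|_{\widehat R}$ versus $\mathrm{Pf}|_R$. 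The paper's approach is more elementary in that it avoids any multilinear-algebra machinery, but it requires several pages of case-by-case computation; yours is shorter and more conceptual, at the cost of invoking the transformation law $\mathrm{Pf}(gAg^T)=\det(g)\,\mathrm{Pf}(A)$ (which, as a polynomial identity for alternating matrices, does hold over any field).
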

    
    \begin{proof} We write $E = V \oplus [E, E]$, $\widetilde{E} = V \oplus [ \widetilde{E}, \widetilde{E}]$ and $\widehat{E} = V \oplus [\widehat{E}, \widehat{E}]$, where $V $ is the vector space spanned by $x_1, x_2, x_3, x_4$.
    
    1) It is easy to see that if there is an isomorphism $\varphi$ between two of the Lie algebras $E$, $\widetilde{E}$ and $\widehat{E}$ then there is a $\mathbb{N}$-graded isomorphism  $\varphi_0$ between the same Lie algebras i.e. is induced by an automorphism of $V$  as a vector space. For  example if $\varphi(x_i) = v_i + w_i$, where $v_i \in V$ and $w_i$ is an element of the corresponding derived Lie subalgebra then we can define $\varphi_0(x_i) = v_i$.
    
    2) Let $I$ be the ideal of $\widetilde{E}$ generated by $x_1$. Then $\widetilde{E}/ I = : F$ is a 3-generated free nilpotent $\mathbb{N}$-graded Lie algebra of class 2.
    
    2.1) Suppose that $\widetilde{E}$ and $E$ are isomorphic as $\mathbb{N}$-graded Lie algebras. Then there is an ideal $J$ of $E$ generated by one element of $V$ such that $E/ J \simeq F$. Let $\pi : E \to E/J$ be the canonical epimorphism. Since $[x_1, x_2] = 0$ 
    we have that $[\pi(x_1), \pi(x_2)] = 0$. Since $F$ is free nilpotent with $\pi(x_1), \pi(x_2)$ either zero or of degree 1 we deduce that either   $\pi(x_1) = \lambda \pi(x_2)$ for some $\lambda \in k$ or  $\pi(x_2) = 0$. Recall that
    $[x_3, x_2] + [x_1, x_4] = 0$ in $E$, hence
    \begin{equation} \label{abc} 0 = [\pi(x_3), \pi(x_2)] + [\pi(x_1), \pi(x_4)]. \end{equation}

 2.1.1) If $\pi(x_1 - \lambda x_2) = 0$ then $J$ is generated as an ideal by $x_1 - \lambda x_2$ and $F$ has free generators $\pi(x_2), \pi(x_3), \pi(x_4)$. By (\ref{abc}) $0 = [\pi(x_3), \pi(x_2)] + [\pi(x_1), \pi(x_4)] =[\pi(x_3), \pi(x_2)] + [\lambda \pi(x_2), \pi(x_4)]  = [\pi(x_2), \lambda \pi(x_4) - \pi(x_3)]$. Then we get a contradiction with $F$ a free nilpotent Lie algebra of class 2 with free generators $\pi(x_2),$ $ \pi(x_3), \pi(x_4)$.

  2.1.2) If $\pi(x_2) = 0$ then $J$ is generated as an ideal by $x_2$. And we can argue as in case 2.1.1.

  2.2) Suppose that $\widetilde{E}$ and $\widehat{E}$ are isomorphic as $\mathbb{N}$-graded Lie algebras. Then there is an ideal $\widehat{J}$ of $\widehat{E}$ generated by one element of $V$ such that $\widehat{E}/ \widehat{J} \simeq F$. Let $\pi : \widehat{E} \to \widehat{E}/\widehat{J}$ be the canonical epimorphism. Since $[x_1, x_2] = 0$ in $\widehat{E}$
    we have that $[\pi(x_1), \pi(x_2)] = 0$ and as in case 2.1 we deduce that either  $\pi(x_1) = \lambda \pi(x_2)$ for some $\lambda \in k$ or $\pi(x_2) = 0$.
    Thus for some $a \in \{ x_2 , x_1 - \lambda x_2 \} \subset V$ we have that $\pi(a) = 0$.
    Similarly since 
    $[x_3, x_4] = 0$ in $E$, 
    there is some   $b \in \{ x_4 , x_3 - \mu x_4 \} \subset V$ for some $\mu \in k$ such that $\pi(b) = 0$. Thus $Ker (\pi) \cap V$  is a vector space over $k$ of dimension at least 2.  This contradicts the fact that $Ker ( \pi) = \widehat{J}$ is an ideal of $\widehat{E}$  generated by one element of $V$.

  2.3) Assume now that there is an $\mathbb{N}$-graded isomorphism between $E$ and $\widehat{E}$. Then in $V \subseteq 
  E$ there are elements $y_1, y_2, y_3, y_4$ that are linearly independent and such that $[y_1, y_2] = 0 = [y_3, y_4]$. 
  Then there are elements $v_1, v_2$ in $E$ such that $k v_1 \not= k v_2$ ,  either $v_1$ or $v_2$ does not belong to $k x_1 + k x_2$ and $[v_1, v_2]=0$. 
  
  Write
  $v_1 = \sum_{1 \leq i \leq 4} a_i x_i, v_2 = \sum_{1 \leq i \leq 4} b_i x_i$, where all $a_i, b_i \in k$.
  Then $ 0 = [v_1, v_2] = \sum_{ 1 \leq i < j \leq 4}(a_i b_j - a_j b_i)[x_i, x_j]$.
  Using that $[x_1, x_2] =0, [x_3, x_2] + [x_1, x_4]=0$ in $E$ we deduce that 
  $$0 =\sum_{ 1 \leq i < j \leq 4}(a_i b_j - a_j b_i)[x_i, x_j] = 
  (a_1 b_3 - a_3 b_1) [x_1, x_3] + $$ $$(a_1 b_4 - a_4 b_1 + a_2 b_3 - a_3 b_2) [x_1, x_4] + (a_2 b_4 - a_4 b_2) [x_2, x_4] + ( a_3 b_4 - a_4 b_3) [x_3, x_4].$$
  Since $[x_1, x_3], [x_1, x_4], [x_2, x_4], [x_3, x_4]$ are linearly independent in $E$ we deduce that
  $$a_1 b_3 - a_3 b_1= 0, a_2 b_4 - a_4 b_2 = 0, a_3 b_4 - a_4 b_3 = 0, a_1 b_4 - a_4 b_1 + a_2 b_3 - a_3 b_2 =0.$$ 
  Recall that  either $v_1$ or $v_2$ does not belong to $k x_1 + k x_2$, say $v_2$. Then either $b_3 \not=0$ or $b_4 \not= 0$. Without loss of generality $b_3 \not= 0$. Then
  $$a_1 = \frac{a_3 b_1}{b_3}, a_4 = \frac{a_3 b_4}{b_3}, a_2 b_4 = a_4 b_2 = \frac{a_3 b_4 b_2}{b_3}.$$
  If $b_4 \not= 0$ then $a_2 = \frac{a_3 b_4 b_2}{b_3 b_4} = \frac{a_3 b_2}{b_3}$, hence $a_i = \frac{a_3}{b_3} b_i$ and $v_1 = \frac{a_3}{b_3} v_2$, a contradiction.
  
  If $b_4 = 0$ then $a_4 = 0$, $0 = a_1 b_4 - a_4 b_1 + a_2 b_3 - a_3 b_2 = a_2 b_3 - a_3 b_2 $ hence $a_2 = \frac{a_3 b_2}{b_3}$. Hence $a_i = \frac{a_3}{b_3} b_i$ and $v_1 = \frac{a_3}{b_3} v_2$, a contradiction. 
    \end{proof} 
    
    The following result shows that even in the cases of $\mathbb{N}$-graded Lie algebras Kurosh type result does not hold.
    
    \begin{lemma}   Let $M,S$ be as before. Then $S \not= C * D$ for  any non-zero  $\mathbb{N}$-graded Lie subalgebras $C$ and $D$ of $S$.
    \end{lemma}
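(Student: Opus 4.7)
My plan is to argue by contradiction: assume $S=C*D$ with $C,D$ nonzero graded Lie subalgebras of $S$. Since the inclusions are graded, the isomorphism $C*D\cong S$ is graded for the natural free-product grading, giving $S_1=C_1\oplus D_1$ and $S_2=C_2\oplus D_2\oplus[C_1,D_1]$, where the cross brackets of basis elements of $C_1$ and $D_1$ are linearly independent in the free product (so $\dim[C_1,D_1]=\dim C_1\cdot\dim D_1$). Combining this with Lemma \ref{pres} ($\dim S_1=\dim S_2=2$ and $\dim H_2(S,k)=2$) and with the additivity of $H_1$ and $H_2$ under free Lie products (a consequence of Corollary \ref{ses-homology} applied to the graph with two vertices and trivial edge algebra), the problem reduces to a finite case analysis in the dimension vector $(\dim C_1,\dim D_1,\dim C_2,\dim D_2)$.

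If $\dim C_1=\dim D_1=1$, nonzero $c\in C_1$ and $d\in D_1$ span $S_1=ka+kb$, so $[c,d]$ is a nonzero scalar multiple of $[a,b]=0$ and hence vanishes in $S=C*D$; but projecting $C$ and $D$ onto their $1$-dimensional degree-$1$ quotients (possible because $[C,C]_1=[D,D]_1=0$ by the grading) yields a surjection from $C*D$ onto the free Lie algebra on two generators, in which the image of $[c,d]$ is nonzero, a contradiction. So by symmetry we may assume $C_1=0$ and $D_1=ka+kb$. Additivity of $H_1$ in degrees $\ge 3$ forces both $C$ and $D$ to be generated in degrees $\le 2$. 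If $\dim D_2=2$ then $C$ has no generators at all and must vanish. If $\dim D_2=0$, then $D=ka+kb$ is $2$-dimensional abelian; killing $\langle\!\langle a,b\rangle\!\rangle$ in the presentation of Lemma \ref{pres} gives $C\cong F(z,t)$, and $S\cong(ka+kb)*F(z,t)$ would have $\dim H_2=1\neq 2$, another contradiction.

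The main obstacle is the remaining subcase $\dim C_2=\dim D_2=1$, where I expect the real work to lie. Here $C=ku$ is $1$-dimensional abelian (being generated by a single element), $D_2=kv$, and $\{u,v\}$ is a basis of $S_2=kz+kt$. I would exploit the degree-$3$ decomposition in the free product $S=D*(ku)$, namely
\[
(D*ku)_3=D_3\oplus k[u,a]\oplus k[u,b].
\]
Expanding $z=\lambda u+\mu v$ and $t=\lambda'u+\mu'v$ and substituting into the defining relation $[z,b]=[t,a]$ of $S$, the resulting identity will split into a ``cross'' component in $k[u,a]\oplus k[u,b]$ (coming from the $u$-parts of $z$ and $t$) and an ``internal'' component in $D_3$ (coming from their $v$-parts); each must vanish by directness of the decomposition. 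The vanishing of the cross component will force $\lambda=\lambda'=0$, making both $z$ and $t$ proportional to $v$ and contradicting their linear independence in $S_2$. Cleanly tracking this degree-$3$ bookkeeping is the technical heart of the argument.
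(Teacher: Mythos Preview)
Your proof is correct and takes a genuinely different route from the paper's. Both arguments begin by forcing $M=ka\oplus kb$ into one of the free factors, but the paper does this via Proposition~\ref{free0} (the Kukin/Lichtman--Shirvani result that a subalgebra meeting each factor in a free subalgebra is free), whereas you get it by the elementary observation that if $\dim C_1=\dim D_1=1$ then $[c,d]$ would have to be simultaneously zero (since $[S_1,S_1]=0$) and nonzero (by projecting to a free Lie algebra on two generators). The case $D=M$ is then Lemma~\ref{free-product1} in both approaches. The real divergence is in the remaining case $C=ku$, $D=\langle a,b,v\rangle$: the paper shows that $C*D$ is then isomorphic to the right-angled Artin Lie algebra $\widetilde S=\langle x_1,x_2,x_3,x_4\mid[x_1,x_2],[x_1,x_3]\rangle$ and invokes Lemma~\ref{lemma-long}, a fairly detailed comparison of nilpotent quotients showing $S\not\cong\widetilde S$. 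Your degree-$3$ argument is both shorter and more self-contained: the splitting $S_3=D_3\oplus k[u,a]\oplus k[u,b]$ (coming from the retraction $S=C*D\to D$, with linear independence of $[u,a],[u,b]$ checked by projecting further onto $(ku)*(ka)$ and $(ku)*(kb)$) forces the cross part $\lambda[u,b]-\lambda'[u,a]$ of the relation $[z,b]=[t,a]$ to vanish, giving $\lambda=\lambda'=0$ and contradicting that $\{z,t\}$ and $\{u,v\}$ are both bases of $S_2$. The paper's approach has the virtue of reusing Lemma~\ref{lemma-long} (which it needs anyway for Lemma~\ref{final}); yours avoids both that lemma and the external input of Proposition~\ref{free0}.
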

    \begin{proof}  Suppose $S = C * D$.
Recall that  $M\leq S$ is abelian. If $M$ is not a subalgebra of $C$ or of $D$ we have that $\dim_k(C \cap M) \leq 1$, $\dim_k(D \cap M) \leq 1$ hence by Proposition \ref{free0} $M$ is a free Lie algebra, a contradiction.

Suppose from now that $M$ is a Lie subalgebra of $C$. Recall that  by Lemma  \ref{pres} $S = \langle a,b,z,t  | [z,b] - [t,a], [a,b] \rangle$.
If $C$ is generated by 2 elements since $C$ is $\mathbb{N}$-graded i.e. $C = \oplus_{i \geq 1}  C_i$, $M = k a \oplus k b \subseteq C_1$  we deduce that $C = M$, a contradiction with Lemma \ref{free-product1}.

If $ C$ is generated by at least 3 elements, since $ S=C*D$ is generated by precisely 4 elements we conclude that $C$ is generated by 3 elements and $D$ by 1 element. Then $D = k w$ and since  $S$ has 2 relators, $C = \langle a,b, v ~| ~[a,b], r \rangle$ for some relation $r$ on $a,b,v$. 
 Since $S$ has a presentation with quadratic relations the same holds for $C$, so $r = \alpha[a,v] + \beta[b,v] = [\alpha a + \beta b, v]$ for some $\alpha, \beta \in k$ not both zero. Suppose $\alpha \not= 0$. Then for $\widehat{a} = \alpha a + \beta b$ we have $C = \langle \widehat{a}, b, v ~| ~[ \widehat{a}, b], [\widehat{a}, v] \rangle$ and $C * D$ is isomorphic to the Lie algebra $\widetilde{S}$ from Lemma \ref{lemma-long}. By Lemma \ref{lemma-long} $S$ and $\widetilde{S}$ are not isomorphic, a contradiction.

    \end{proof}
     In \cite{Feldman2} it was shown an example of a Lie algebra with infinitely many ends that is not a free product. The example from \cite{Feldman2} is derived from the Shirshov example of   a Lie algebra that is not a free product \cite{Shirshov2}, thus it is not $\mathbb{N}$-graded. Next we show that there is a $\mathbb{N}$-graded example.
    
    \begin{corollary}
    There exists a $\mathbb{N}$-graded Lie algebra $L_0$ that has infinitely many ends i.e. $\dim_k H^1(L_0, U(L_0)) = \infty$ but $L_0$ is not a free product of $\mathbb{N}$-graded Lie subalgebras.
    \end{corollary}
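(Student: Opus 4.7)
I would take $L_0=S$, the finitely generated $\N$-graded subalgebra of $L=M*N$ studied above, which by Lemma \ref{pres} has minimal presentation $\langle a,b,z,t\mid [a,b],\ [z,b]-[t,a]\rangle$ with $a,b$ in degree $1$ and $z,t$ in degree $2$. The preceding lemma already asserts that $S$ is not a free product of two non-zero $\N$-graded Lie subalgebras, so it only remains to show that $\dim_k H^1(S,U(S))=\infty$.

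The plan is to build an explicit minimal free resolution of the trivial right $U(S)$-module $k$ of length $2$ and read off $H^1(S,U(S))$. First I would bound $\cd(S)\le 2$: since $U(L)$ is free as a right $U(S)$-module by PBW, any $U(L)$-projective resolution of $k$ is also a $U(S)$-projective one, so $\cd(S)\le \cd(L)$; and Theorem \ref{exact1} for the free product $L=M*N$ gives $0\to U(L)\to (k\otimes_{U(M)}U(L))\oplus(k\otimes_{U(N)}U(L))\to k\to 0$, which combined with Shapiro's lemma and $\cd(M)=2$, $\cd(N)=1$ implies $\cd(L)\le 2$. Together with $\dim_k H_2(S,k)=2$ from Lemma \ref{pres} and the fact that $S$ is finitely presented, this produces a minimal free resolution
$$0\to U(S)^2\xrightarrow{d_2} U(S)^4\xrightarrow{d_1} U(S)\to k\to 0$$
of right $U(S)$-modules, where $d_1$ sends the standard basis vectors $e_a,e_b,e_z,e_t$ to $a,b,z,t$ and, using the Fox-type formula from the proof of Proposition \ref{lem:HNN}, $d_2(f_1)=e_a b-e_b a$ and $d_2(f_2)=e_a t-e_b z+e_z b-e_t a$.

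Applying $\Hom_{U(S)}(-,U(S))$ yields a cocomplex
$$0\to U(S)\xrightarrow{d_1^*} U(S)^4\xrightarrow{d_2^*} U(S)^2$$
with $d_1^*(v)=(va,vb,vz,vt)$ and $d_2^*(w_1,w_2,w_3,w_4)=(w_1 b-w_2 a,\ w_1 t-w_2 z+w_3 b-w_4 a)$, whose first cohomology is $H^1(S,U(S))$. To exhibit an infinite-dimensional subspace I would consider the family $(0,0,va,vb)$ for $v\in U(S)$: the identity $[a,b]=0$ in $S$ forces $ab=ba$ in $U(S)$, so every such element lies in $\ker d_2^*$. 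If $(0,0,va,vb)=(v'a,v'b,v'z,v't)$ for some $v'\in U(S)$ then $v'a=0$, and since $U(S)$ is a domain (by PBW) we get $v'=0$ and hence $v=0$. Therefore $v\mapsto[(0,0,va,vb)]$ is a $k$-linear injection $U(S)\hookrightarrow H^1(S,U(S))$, so $\dim_k H^1(S,U(S))=\infty$.

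The step I expect to be the main obstacle is the combined claim that $\cd(S)=2$ and that the minimal presentation of Lemma \ref{pres} lifts to a length-$2$ free resolution with precisely the predicted ranks $4$ and $2$; verifying this requires both the PBW-plus-Theorem \ref{exact1} argument for $\cd(L)\le 2$ and a minimality check in the graded setting. An alternative that sidesteps this bound would be to use the HNN decomposition $S=\langle M*kt,\ z\mid [z,b]=[t,a]\rangle$ (base $B=M*kt$, associated subalgebra $kb$, stable letter $z$) and to apply the long exact cohomology sequence of Corollary \ref{exact2}(b) twice—once for the HNN and once for the inner free product $B=M*kt$—to extract a copy of $U(S)$ inside $H^1(S,U(S))$; but the direct computation above is cleaner.
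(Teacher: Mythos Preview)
Your proof is correct, but it takes a genuinely different and more hands-on route than the paper. The paper's argument is a single citation: by Feldman's theorem \cite[Thm.~1.4]{Feldman2}, any Lie subalgebra of a free product $L_1*L_2$ that is not contained in either factor automatically has $\dim_k H^1(L_0,U(L_0))=\infty$; applying this to $S\le M*N$ finishes immediately. You instead bypass Feldman entirely by building the minimal length-$2$ free resolution of $k$ over $U(S)$ from the explicit presentation of Lemma~\ref{pres}, dualising, and exhibiting the injection $U(S)\hookrightarrow H^1(S,U(S))$ via $v\mapsto(0,0,va,vb)$. The ingredients you worry about are all in order: $\cd(S)\le\cd(L)\le 2$ follows from PBW-freeness of $U(L)$ over $U(S)$ together with Theorem~\ref{exact1}; the second syzygy $R_{\mathrm{ab}}$ is then graded projective, hence graded free of rank $\dim_k H_2(S,k)=2$ by graded Nakayama; and your Fox-type formula for $d_2$ matches the description of $\delta_1$ in the proof of Proposition~\ref{lem:HNN}. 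The trade-off is that the paper's proof is shorter and conceptually explains why \emph{any} such subalgebra has infinitely many ends, whereas yours is self-contained, avoids invoking an external theorem, and actually produces explicit cocycles---arguably more informative for this particular $S$.
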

    
    \begin{proof} By \cite[Thm.~1.4]{Feldman2} for any Lie subalgebra $L_0$ of $L_1 * L_2$  such that $L_0 $ is not a subalgebra of $L_i$ for $i = 1, 2$ we have that $\dim_k H^1(L_0, U(L_0)) = \infty$. We apply this for $L_0 = S$, $L_1 = M, L_2 = N$.
    \end{proof}
    \begin{lemma} \label{final}
    $S$ is not a right angled Artin Lie algebra.
    \end{lemma}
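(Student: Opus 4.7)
The plan is to exploit the minimal resolution of right angled Artin Lie algebras from Proposition \ref{free-complex} to obtain strong numerical constraints on the graph $\Gamma$ if $S$ were isomorphic to some $L_\Gamma$, and then reduce the proof to a finite case check already handled in Lemma \ref{lemma-long}.

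First I would observe that for any right angled Artin Lie algebra $L_\Gamma$ the minimal resolution $\mathcal{P}_\Gamma$ of the trivial module $k$ has $P_n$ a free $U(L_\Gamma)$-module on the set of $n$-element subsets of $V(\Gamma)$ spanning a complete subgraph, so tensoring with $k$ over $U(L_\Gamma)$ yields a complex with zero differentials (by minimality). Consequently $\dim_k H_n(L_\Gamma,k)$ equals the number of complete subgraphs on $n$ vertices. In particular $\dim_k H_1(L_\Gamma,k) = |V(\Gamma)|$ and $\dim_k H_2(L_\Gamma,k) = |E(\Gamma)|$.

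Next, by Lemma \ref{pres} we already know $\dim_k H_1(S,k) = 4$ and $\dim_k H_2(S,k) = 2$. Hence if an isomorphism $S \cong L_\Gamma$ existed, we would necessarily have $|V(\Gamma)| = 4$ and $|E(\Gamma)| = 2$. Up to graph isomorphism, there are only two graphs with four vertices and two edges: the path $P_3$ with one isolated vertex (two edges sharing a common vertex), whose right angled Artin Lie algebra is $\langle x_1, x_2, x_3, x_4 \mid [x_1,x_2], [x_1,x_3]\rangle = \widetilde{S}$, and the perfect matching $2K_2$ (two disjoint edges), whose right angled Artin Lie algebra is $\langle x_1, x_2, x_3, x_4 \mid [x_1,x_2], [x_4,x_3]\rangle = \widehat{S}$. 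These are exactly the two Lie algebras appearing in Lemma \ref{lemma-long}.

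Finally, Lemma \ref{lemma-long} precisely states that $S$, $\widetilde{S}$ and $\widehat{S}$ are pairwise non-isomorphic, so $S$ cannot be isomorphic to $L_\Gamma$ for either of the two possible graphs. Therefore $S$ is not a right angled Artin Lie algebra. The only potential obstacle in this plan is making sure the numerical invariants $H_1, H_2$ really pin the graph down so sharply; but since four vertices and two edges admit only the two graphs above (no loops or double edges are allowed), the reduction is immediate and Lemma \ref{lemma-long} does all the remaining work.
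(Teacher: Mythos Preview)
Your proposal is correct and follows essentially the same approach as the paper: use the homology dimensions $\dim_k H_1(S,k)=4$, $\dim_k H_2(S,k)=2$ to force the graph $\Gamma$ to have four vertices and two edges, enumerate the two possible graphs (yielding $\widetilde{S}$ and $\widehat{S}$), and invoke Lemma~\ref{lemma-long}. The only difference is that you spell out more explicitly why $\dim_k H_i(L_\Gamma,k)$ counts complete subgraphs (via the minimal resolution having zero differentials after tensoring with $k$), whereas the paper takes this for granted.
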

    
    \begin{proof} Suppose that $S$ is a right angled Artin Lie algebra. Since $\dim_k H_1(S,k) = 4$ and $\dim_k H_2(S, k) = 2$ the underlying graph of the 
    right angled Artin Lie algebra $S$ has exactly  4 vertices and 2  edges. Thus  up to isomorphism there are two options for $S$ :
    $$\widetilde{S} = \langle x_1, x_2, x_3, x_4 ~| [x_1, x_2], [x_1, x_3] \rangle
  \hbox{   or }
    \widehat{S} = \langle x_1, x_2, x_3, x_4 ~| [x_1, x_2], [x_4, x_3] \rangle.$$
    By Lemma \ref{lemma-long} any two of the Lie algebras $S/[S,S,S]$, $\widetilde{S}/[\widetilde{S},\widetilde{S},\widetilde{S}]$ and $\widehat{S}/[\widehat{S},\widehat{S},\widehat{S}]$ are not isomorphic.
    \end{proof}
    
    Observe that $L = M* N$ is a right angled Artin Lie algebra whose underlying graph $\Gamma$ is one edge and a separate vertex. Note that the square or the line with 4 vertices do not embed in $\Gamma$. By \cite{D2}  the right-angled Artin group $G_{\Gamma}$ has the property that every finitely generated subgroup is a RAAG. The Lie algebra version of this results does not hold by Lemma \ref{final}. The reason this happens is that in the case of Lie algebras there is no Kurosh type theorem.

 \end{document}